\date{}
\newtheorem{thm}{Theorem}[section]
\newtheorem{lem}[thm]{Lemma}
\newtheorem{prop}[thm]{Proposition}
\newtheorem{cor}[thm]{Corollary}
\theoremstyle{definition}
\newtheorem{ex}[thm]{Example}
\newtheorem{rem}[thm]{Remark}
\newtheorem{alg}[thm]{Algorithm}
\numberwithin{equation}{section}
\newcommand{\KK}{{\mathbb K}}
\newcommand{\NN}{{\mathbb N}}
\title{Bases of subalgebras of $\KK\llbracket x\rrbracket $ and $\KK[x]$}
\author{A. Assi} 
\address{Universit\'e d'Angers, Math\'ematiques,
49045 Angers ceded 01, France}
\email{assi@univ-angers.fr} 
\thanks{The first author is partially supported by the project GDR CNRS 2945 and a GENIL-SSV 2014 grant}
\author{P. A. Garc\'{\i}a-S\'anchez}
\address{IEMath-GR and Departamento de \'Algebra, Universidad de Granada, E-18071 Granada, Espa\~na}
\email{pedro@ugr.es} 
\thanks{The second author is supported by the projects MTM2014-55367-P, FQM-343,  FQM-5849, and FEDER funds}
\author{V. Micale}
\address{Dipartimento di Matematica e Informatica, Viale A. Doria, 6 - I 95125 – Catania, Italia}
\email{vmicale@dmi.unict.it}
\begin{document}

\newcommand{\card}{\mathrm {card}}

\begin{abstract}
Let $f_1,\ldots, f_s$ be formal power series (respectively polynomials) in the variable $x$. We study the semigroup of orders of the formal series in
the algebra $K\llbracket f_1,\ldots, f_s\rrbracket \subseteq K\llbracket x \rrbracket$ (respectively the semigroup of degrees of polynomials in
$K[f_1,\ldots,f_s]\subseteq K[x]$). We give procedures to compute these semigroups and several applications. We prove in particular that the space curve parametrized by $f_1,\ldots,f_s$ has a flat deformation into a monomial curve.
\end{abstract}

\maketitle

\section{Introduction}

Let $\KK$ be a field and let $\KK\llbracket x\rrbracket$ be the ring of formal power series  over $\KK$. Let $f_1(x),\ldots,f_s(x)$ be $s$ elements of $\KK\llbracket x\rrbracket$  and let $R=\KK\llbracket f_1,\ldots,f_s\rrbracket$  be the subalgebra of $\KK\llbracket x\rrbracket$  generated by $f_1,\ldots,f_s$. Given $f\in R$, let $\mathrm o(f)$ the order of $f$. The set $\mathrm o(R)=\lbrace o(f)\mid  f\in R\rbrace$  is a submonoid of $\NN$, and the knowledge of a system of generators of this monoid is important for the understanding of the subalgebra $R$. When furthermore $\KK\llbracket x\rrbracket$ is an $R$-module  of finite length, then $\mathrm o(R)$ is a numerical semigroup.

A similar construction can be made in the ring of polynomials $\KK[x]$. More precisely let $f_1(x),\ldots,f_s(x)$ be $s$ elements of  $\KK[x]$ and let  $A=\KK[f_1,\ldots,f_s]$ be the subalgebra of  $\KK[x]$ generated by $f_1,\ldots,f_s$. Given $f\in A$, let $\mathrm d(f)$ the  degree of $f$. The set  $\mathrm d(A)= \lbrace d(f)\mid f\in A\rbrace$ is a submonoid   $\mathrm d(A)$ of $\NN$, and the knowledge of a system of generators of this monoid is important for the understanding of the subalgebra  $A$. When furthermore $\KK[x]$ is an $A$-module of finite length, then $\mathrm d(A)$ is a numerical semigroup. 

A numerical semigroup $S$ is a submonoid of the set of nonnegative integers under addition such that  the $\mathbb N\setminus S$ is finite, or equivalently, $\gcd(S)=1$ (the greatest common divisor of the elements of $S$), see for instance \cite{ns}. In this case, there exists a minimum $c\in S$ such that $c+\mathbb N\subseteq S$. We call this element the \emph{conductor} of $S$, and denote it by $\mathrm c(S)$ (the motivation of this name and others coming from Algebraic Geometry is explained in \cite{1,2}).

Assume that $f_i$ is a monomial $x^{a_i}$ for every $i\in\{1,\ldots, s\}$. Then $\mathrm o(R)$ (respectively $\mathrm d(A)$) is generated by $a_1,\ldots,a_s$. In this case, $R\simeq \KK\llbracket X_1,\ldots,X_s\rrbracket /T$ (respectively $A\simeq \KK[X_1,\ldots,X_s]/T$), where $T$ is a prime binomial ideal and, thus,   $\mathrm V(T)$ is a toric variety.

Given a subalgebra $R=\KK[\![f_1,\ldots,f_s]\!]$ (respectively  $A=\KK[f_1,\ldots,f_s]$), the main objective of this paper is to describe an algorithm that calculates a generating system of $\mathrm o(R)$ (respectively $\mathrm d(A)$). The algorithm we present here allows us, by using the technique of homogenization, to construct a flat $\KK\llbracket u\rrbracket$-module (respectively $\KK[u]$-module) which is a deformation of $R$ (respectively $A$) to a binomial ideal. This technique is well known when $R=\KK\llbracket f_1,f_2\rrbracket$ and $\KK$ is algebraically closed field of characteristic zero (see \cite{go} and \cite{tei}). It turns out that the same holds wherever we can associate a semigroup to the local subalgebra, and also that the same technique can be adapted to the global setting. As a particular case we prove that a plane polynomial curve has a deformation into a complete intersection monomial space curve.

The paper is organized as follows. In Section \ref{formal-space} we focus on the local case, namely the case of a subalgebra $R$ of $\KK\llbracket x\rrbracket$. We introduce the notion of basis of $R$ and we show how to construct such a basis. We also show that
if $\mathrm o(R)$ is a numerical semigroup, then every element of a reduced basis is a polynomial. In Section \ref{deformation-local} we show how to construct a deformation from $R$ to a toric ideal (or a formal toric variety) by using the technique of homogenization. In Section \ref{two-local} we focus on the case when $R=\KK[\![f(x),g(x)]\!]$ and $\KK$ is an algebraically closed field of characteristic zero. The existence in this case of the theory of Newton-Puiseux allows us to precise the results of Sections \ref{formal-space} and \ref{deformation-local}. The difference with the procedure presented in Section \ref{formal-space} is that it does not rely in the computation of successive kernels. Then in Sections \ref{polynomial-curve} and \ref{deformation-global} we adapt the local results to the case of a subalgebra $A$ of $\KK[x]$. When $A=\KK[f(x),g(x)]$ and $\KK$ is algebraically closed field of characteristic zero, a basis of $A$ can be obtained by using the theory of approximate roots of the resultant of $X-f(x),Y-g(x)$, which is a polynomial with one place at infinity. 

The procedures presented here have been implemented in \texttt{GAP} (\cite{gap}) and will be part of the forthcoming stable release of the package \texttt{numericalsgps} (\cite{numericalsgps}).

\subsection{Some notation}

We denote by $\langle A\rangle$ the monoid generated by $A$, $A\subseteq \mathbb N$, that is, the set $\{n_{1}x_{1}+\cdots+n_{m}x_{m}\mid m\in \mathbb N, n_{i}\in \mathbb N, x_{i}\in A \hbox{ for all } i\in\{1,\dots,m\}\}$. 

Associated to each numerical semigroup $S$ we can define a natural partial ordering $\le _{S}$, where for two elements $s$ and $r$ in $S$ we have $s\le _{S} r$ if there exists  $u\in S$ such that $r=s+u$. The set $g_{i}$ of minimal elements in $S\setminus \{0\}$ with this ordering is called a {\em minimal set of generators} for $S$. The set of minimal generators is finite since for any $s\in S\setminus\{0\}$, we have $x\not\equiv y\pmod s$ if $x\neq y$ are minimal elements with respect to $\le_{S}$. The cardinality of the minimal generating system is known as the \emph{embedding dimension} of $S$.

\section{Semigroup of a formal space curve}\label{formal-space}

Let $\KK$ be a field. In this section we will consider rings $R$ that are subalgebras of $\KK\llbracket x\rrbracket $ and such that, if we denote  the integral closure of $R$ in its quotient field by $\bar R$, then $\bar R=\KK\llbracket x\rrbracket $ and $\lambda_{R}(\bar{R}/R)<\infty$  where
$\lambda_R(\cdot)$ is the length as $R$-module. Part of the results of this section are inspired in \cite{preprint} and in \cite{m}. An alternative procedure (implemented in Maple) is provided in \cite{cc}. The main difference with our approach is that we do not rely on the multiplicity sequence, and thus we do not need to perform blow-ups. Also we take intrinsic advantage in our implementation of the \texttt{GAP} package \texttt{numericalsgps} (\cite{gap, numericalsgps}).

Let $f=\sum_{i}c_{i}x^{i}\in \bar{R}^*=\bar R\setminus\{0\}$. Define $\mathrm{supp}(f)=\lbrace i, c_i\not=0\rbrace$. We call $\min\mathrm{supp}(f)$ the \emph{order} of $f$ and we denote it by $\mathrm o(f)$. We also set $\mathrm M_\mathrm o(f)=c_{\mathrm o(f)}x^{\mathrm o(f)}$. If $\mathrm M_\mathrm o(f)=x^{\mathrm o(f)}$, then we shall say, by abuse of notation, that $f$ is \emph{monic}. We set $\mathrm o(0)=+\infty$. 

We denote by $\mathrm o(R)$ the set of orders of elements in $R^*=R\setminus\{0\}$, that is, $\mathrm o(R)=\{\mathrm o(f)\mid f\in R^*\}$. We finally set $\mathrm M_\mathrm o(R)=\KK[\mathrm M_\mathrm o(f) \mid f\in R^*]$. 

\begin{prop}\label{o}
  Let $f_{1},f_{2}$ be elements of ${\bar R}^{*}$ and let $a=\min\{\mathrm o(f_{1}),\mathrm o(f_{2})\}$. 

  \begin{enumerate}[(i)]
  \item  $a\le \mathrm o(f_{1}+f_{2})$.
  \item  If $\mathrm o(f_{1})\neq \mathrm o(f_{2})$ then $a=\mathrm o(f_{1}+f_{2})$.
  \item  $\mathrm o(f_{1}f_{2})=\mathrm o(f_{1})+\mathrm o(f_{2})$.
  \end{enumerate}
\end{prop}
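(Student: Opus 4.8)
The plan is to work directly from the definitions of order as $\min\mathrm{supp}$, writing $f_1 = \sum_i c_i x^i$ and $f_2 = \sum_i d_i x^i$ with $\mathrm o(f_1) = m$ and $\mathrm o(f_2) = n$, so that $c_i = 0$ for $i < m$, $c_m \neq 0$, and similarly $d_i = 0$ for $i < n$, $d_n \neq 0$. Throughout I will assume without loss of generality that $m = \min\{m,n\} = a$, swapping $f_1$ and $f_2$ if necessary, which is harmless since all three statements are symmetric in $f_1, f_2$.

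For (i), the coefficient of $x^i$ in $f_1 + f_2$ is $c_i + d_i$, and for $i < a = \min\{m,n\}$ we have both $c_i = 0$ and $d_i = 0$, hence $c_i + d_i = 0$; therefore $\mathrm{supp}(f_1+f_2) \subseteq \{i : i \ge a\}$, giving $a \le \mathrm o(f_1+f_2)$ (with the convention $\mathrm o(0) = +\infty$ covering the case $f_1 + f_2 = 0$). For (ii), suppose $m \neq n$, say $m < n$ after the normalization above; then the coefficient of $x^m$ in $f_1 + f_2$ is $c_m + d_m = c_m + 0 = c_m \neq 0$, so $\mathrm o(f_1+f_2) \le m$, and combined with (i) this forces $\mathrm o(f_1+f_2) = m = a$.

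For (iii), I would compute the coefficient of $x^k$ in the product $f_1 f_2$ as $\sum_{i+j=k} c_i d_j$. For $k < m+n$, every term $c_i d_j$ with $i + j = k$ has either $i < m$ (forcing $c_i = 0$) or $j < n$ (forcing $d_j = 0$), since $i \ge m$ and $j \ge n$ would give $i + j \ge m+n > k$; hence the coefficient vanishes and $\mathrm o(f_1 f_2) \ge m+n$. For $k = m+n$, the only pair $(i,j)$ with $i + j = k$, $i \ge m$, $j \ge n$ is $(i,j) = (m,n)$, so the coefficient of $x^{m+n}$ equals $c_m d_n$; this is nonzero because $\KK\llbracket x\rrbracket$ is an integral domain (equivalently, $\KK$ has no zero divisors). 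Therefore $\mathrm o(f_1 f_2) = m + n = \mathrm o(f_1) + \mathrm o(f_2)$.

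This proof is essentially a routine unwinding of definitions, so I do not anticipate a genuine obstacle; the only point that requires a hypothesis beyond bookkeeping is the nonvanishing of $c_m d_n$ in (iii), which uses that $\KK$ is a field (hence a domain) — a fact that is in force throughout the section. One could also derive (iii) abstractly from the fact that the leading-term map $f \mapsto \mathrm M_\mathrm o(f)$ into the associated graded ring is multiplicative, but the direct coefficient computation is shorter and self-contained.
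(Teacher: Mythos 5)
Your proof is correct and is exactly the routine unwinding of definitions that the paper has in mind — its own proof is simply the one-line remark that the statement ``follows easily from the definition of order.'' Nothing further is needed; the only substantive point, the nonvanishing of $c_m d_n$ in (iii) because $\KK$ is a field, is correctly identified and handled.
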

\begin{proof}
This follows easily from the definition of order.
\end{proof}

\begin{prop}\label{R1}\cite[Lemma 3, p.486]{4}
 Let $R_{1}$ and $R_{2}$ be rings of our type such that  $R_{1}\subseteq R_{2}$ and $\mathrm o(R_{1})=\mathrm o(R_{2})$. Then $R_{1}=R_{2}$.
\end{prop}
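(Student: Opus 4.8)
The plan is to show that the inclusion $R_1 \subseteq R_2$ is actually an equality by a successive-approximation argument that exploits the hypothesis $\mathrm o(R_1) = \mathrm o(R_2)$ together with completeness of $\KK\llbracket x\rrbracket$. First I would fix an arbitrary $g \in R_2^*$ and try to build, term by term, an element of $R_1$ equal to $g$. Since $\mathrm o(g) \in \mathrm o(R_2) = \mathrm o(R_1)$, there is some $h_0 \in R_1^*$ with $\mathrm o(h_0) = \mathrm o(g)$; after scaling $h_0$ by a unit of $\KK$ we may assume $\mathrm M_\mathrm o(h_0) = \mathrm M_\mathrm o(g)$, so that $g - h_0$ has strictly larger order (or is zero) by Proposition~\ref{o}(i),(ii). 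Iterating, I would produce a sequence $h_0, h_1, \ldots$ in $R_1$ with $\mathrm o\!\left(g - \sum_{j=0}^{n} h_j\right)$ strictly increasing in $n$, hence tending to $+\infty$.

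The second ingredient is to pass from this infinite process to an actual element of $R_1$. The partial sums $s_n = \sum_{j=0}^n h_j$ form a Cauchy sequence in $\KK\llbracket x\rrbracket$ for the $x$-adic topology, and they converge to $g$. So one must argue that $R_1$ is closed in $\KK\llbracket x\rrbracket$, i.e.\ that $g \in R_1$. This is where the finiteness hypothesis $\lambda_{R_1}(\bar R / R_1) < \infty$ enters: since $\mathrm o(R_1)$ is then a numerical semigroup with some conductor $c = \mathrm c(\mathrm o(R_1))$, the ring $R_1$ contains $x^c \KK\llbracket x\rrbracket$ (every power series of order $\ge c$ lies in $R_1$, again by the successive-approximation argument applied inside $\bar R = \KK\llbracket x \rrbracket$ — or one notes $\mathfrak m_{\bar R}^c \subseteq R_1$ follows from $R_1$ having finite colength and conductor $c$). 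Hence $R_1$ is open, therefore closed, in $\KK\llbracket x\rrbracket$, and the convergent sum $g = \lim_n s_n$ differs from a partial sum $s_n$ (with $n$ large, so $\mathrm o(g - s_n) \ge c$) by an element of $x^c\KK\llbracket x\rrbracket \subseteq R_1$; since $s_n \in R_1$ we get $g \in R_1$.

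I would organize the proof as: (1) reduce to showing $R_2 \subseteq R_1$ for an arbitrary element $g$; (2) establish the "tail containment" $x^c \KK\llbracket x \rrbracket \subseteq R_1$ where $c = \mathrm c(\mathrm o(R_1))$, using that $\mathrm o(R_1) = \mathrm o(R_2)$ is a numerical semigroup and the colength of $R_1$ is finite; (3) run the finite termination of the approximation, stopping once the order of the remainder reaches $c$; (4) conclude. Each step uses only Proposition~\ref{o} and elementary facts about numerical semigroups and formal power series.

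The main obstacle I expect is step (2): carefully justifying that the whole maximal ideal power $x^c\KK\llbracket x\rrbracket$ sits inside $R_1$, and that $c$ can be taken to be the conductor of the numerical semigroup $\mathrm o(R_1)$. One has to be a little careful because a priori $R_1$ only knows about orders, not about which specific higher-order power series it contains; the argument is that for any $f \in \bar R^*$ with $\mathrm o(f) \ge c$, one can repeatedly subtract suitable scalar multiples of elements of $R_1$ (whose existence is guaranteed because every integer $\ge c$ lies in $\mathrm o(R_1)$) to strictly increase the order of the remainder without bound, and then invoke $x$-adic completeness of $R_1$ — which itself needs the finite-colength hypothesis to know $R_1$ is $x$-adically complete, or equivalently closed in $\KK\llbracket x\rrbracket$. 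Threading this without circularity is the delicate point; everything else is a routine order bookkeeping argument built on Proposition~\ref{o}.
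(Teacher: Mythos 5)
The paper itself offers no proof of this proposition; it is simply quoted from Matsuoka \cite[Lemma 3]{4}. Your successive-approximation scheme is the natural argument and, as far as I can tell, essentially Matsuoka's. Steps (1), (3) and (4) are sound: for $g\in R_2^*$ each remainder $g-s_n$ lies in $R_2$ (because $s_n\in R_1\subseteq R_2$), so when nonzero its order lies in $\mathrm o(R_2)=\mathrm o(R_1)$ and can be cancelled again by Proposition~\ref{o}; and once one knows $x^N\KK\llbracket x\rrbracket\subseteq R_1$ for \emph{some} $N$, the process terminates after finitely many steps, so no appeal to Cauchy sequences, completeness, or closedness of $R_1$ is actually needed.

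The genuine gap is step (2), exactly where you place it, and neither of your two proposed justifications closes it as written. Running the approximation ``inside $\bar R$'' only shows that an element of large order lies in the $x$-adic \emph{closure} of $R_1$, which is the very thing in question; and taking the exponent to be $c=\mathrm c(\mathrm o(R_1))$ reproduces Proposition~\ref{a}, which the paper deduces \emph{from} Proposition~\ref{R1}, so relative to the paper's logic that route is circular. The repair is to give up on the optimal exponent: since $\lambda_{R_1}(\bar R/R_1)=n<\infty$, a composition series of $\bar R/R_1$ yields maximal ideals $\mathfrak m_1,\dots,\mathfrak m_n$ of $R_1$ with $\mathfrak m_1\cdots\mathfrak m_n\,\bar R\subseteq R_1$. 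Each $\mathfrak m_i$ is nonzero ($R_1$ is not a field, its fraction field being $\KK((x))$), and $R_1$ contains some $b$ of positive order (as $x\in\mathrm{Frac}(R_1)$), so each $\mathfrak m_i$ contains an element $f_i=a_ib$ of positive order; then $f_1\cdots f_n\,\bar R=x^{N}\KK\llbracket x\rrbracket$ with $N=\sum_i\mathrm o(f_i)$, whence $x^{N}\KK\llbracket x\rrbracket\subseteq R_1$. With this $N$ in place of $c$ your argument goes through verbatim, and the sharper containment with $c=\mathrm c(\mathrm o(R_1))$ is then recovered a posteriori, as in Proposition~\ref{a}.
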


 \begin{prop}\label{Nd}\cite[Proposition 1, p.488]{4}
Let $R$ be a ring of our type. Then $\lambda_{R}(\bar{R}/R)=|\NN\setminus \mathrm o(R)|$.
\end{prop}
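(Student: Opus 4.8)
The plan is to exhibit an explicit filtration of $\bar R$ by $R$-submodules whose successive quotients are one-dimensional $\KK$-vector spaces indexed precisely by $\NN\setminus\mathrm o(R)$, and then to add up lengths. For each $n\in\NN$ set $F_n=\{f\in\bar R\mid \mathrm o(f)\ge n\}$ together with $F_\infty=0$; by Proposition~\ref{o} each $F_n$ is an $R$-submodule of $\bar R$ (closure under addition uses (i), and $R\cdot F_n\subseteq F_n$ uses (iii) together with the fact that $\mathrm o$ is nonnegative on $R$), and $F_0=\bar R$, $\bigcap_n F_n=0$. Since $\bar R=\KK\llbracket x\rrbracket$, the quotient $F_n/F_{n+1}$ is isomorphic as a $\KK$-vector space to $\KK x^n$, hence one-dimensional, via $f\mapsto \mathrm M_{\mathrm o}(f)$ when $\mathrm o(f)=n$. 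I would then intersect this filtration with $R$: set $G_n=F_n\cap R$. Again by Proposition~\ref{o} these are $R$-modules, and the inclusion $R\hookrightarrow\bar R$ induces an injection $G_n/G_{n+1}\hookrightarrow F_n/F_{n+1}$.

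The key step is to identify when this injection is an isomorphism. The map $G_n/G_{n+1}\to F_n/F_{n+1}\cong\KK$ is surjective (hence an isomorphism of one-dimensional spaces) if and only if there is some $f\in R$ with $\mathrm o(f)=n$, i.e.\ if and only if $n\in\mathrm o(R)$; otherwise $G_n=G_{n+1}$ and the quotient is zero. Thus
\[
\lambda_R\bigl(F_n/(F_{n+1}+G_n)\bigr)=
\begin{cases}
0 & \text{if } n\in\mathrm o(R),\\
1 & \text{if } n\notin\mathrm o(R).
\end{cases}
\]

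To finish, I would compare the two filtrations using additivity of length. Because $\lambda_R(\bar R/R)<\infty$ by hypothesis, only finitely many of the graded pieces of either filtration are nonzero, and for $n\gg 0$ we have $F_n\subseteq R$ (equivalently $n\in\mathrm o(R)$ eventually, since the complement is finite), so the two filtrations agree far out. A telescoping/dévissage argument — formally, writing $\bar R/R$ as built up from the layers $F_n/(F_{n+1}+G_n)$ — then gives
\[
\lambda_R(\bar R/R)=\sum_{n\ge 0}\lambda_R\bigl(F_n/(F_{n+1}+G_n)\bigr)=|\NN\setminus\mathrm o(R)|.
\]
The main obstacle is making the bookkeeping of this dévissage rigorous: one must check that the natural maps between successive quotients of the two filtrations assemble correctly so that the alternating sum of lengths collapses to the stated formula. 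This is routine once one notes finiteness of all the modules involved (a consequence of $\lambda_R(\bar R/R)<\infty$), but it is the only place where care is needed; everything else follows directly from Proposition~\ref{o} and the fact that $\bar R=\KK\llbracket x\rrbracket$ has a $\KK$-basis of monomials.
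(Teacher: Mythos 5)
The paper does not actually prove this proposition --- it is quoted from Matsuoka \cite{4} --- so your argument has to stand on its own. Its overall strategy is the standard (and correct) one: filter $\bar R/R$ by the images $\bar F_n=(F_n+R)/R$ of $F_n=x^n\KK\llbracket x\rrbracket$ and add up the lengths of the graded pieces. Your two local computations are both right: $\bar F_n/\bar F_{n+1}\cong F_n/(F_{n+1}+G_n)$ (second isomorphism theorem, plus the observation that if $f\in F_n$ is written $f=g+r$ with $g\in F_{n+1}$ and $r\in R$, then $r=f-g$ has order $\ge n$, so $r\in G_n$), and this quotient is a simple $R$-module when $n\notin\mathrm o(R)$ and zero when $n\in\mathrm o(R)$, since $F_n/F_{n+1}$ is one-dimensional over $\KK$ with $R$ acting through evaluation at $0$.

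The genuine gap is the step ``$F_n\subseteq R$ for $n\gg0$'', which is what makes the telescoping terminate. The justification you offer is circular and, separately, insufficient. Circular, because the finiteness of $\NN\setminus\mathrm o(R)$ is not a hypothesis: in the paper it is \emph{deduced} from this very proposition (via Proposition~\ref{o(R)}). Insufficient, because even granting $n\in\mathrm o(R)$ for all $n\ge N$, that only says every order $\ge N$ is realized in $R$; to get $F_N\subseteq R$ from it you would successively subtract elements of $R$ and need the resulting infinite series to converge \emph{inside} $R$, i.e.\ you would need $R$ to be $x$-adically closed in $\bar R$ --- essentially Proposition~\ref{a}, which in the paper's logical order comes after this result. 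What the finite-length hypothesis gives you for free is only that the chain $\bar F_0\supseteq\bar F_1\supseteq\cdots$ stabilizes at some $\bar F_N$; your dévissage then yields $\lambda_R(\bar R/R)=|\NN\setminus\mathrm o(R)|+\lambda_R(\bar F_N)$ (stabilization does force $n\in\mathrm o(R)$ for all $n\ge N$, so the finiteness of the complement is not the issue), and you still owe the reader $\bar F_N=0$. The fix is short and worth recording: a finite-length module is finitely generated, so $\bar R=R+Rm_1+\cdots+Rm_t$ is a finite $R$-module; since $\bar R$ is the integral closure of $R$ in its quotient field, each $m_i=p_i/q_i$ with $p_i,q_i\in R^*$, and $q=q_1\cdots q_t\in R^*$ satisfies $q\bar R\subseteq R$. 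As $q$ is $x^{\mathrm o(q)}$ times a unit of $\bar R$, this says $F_{\mathrm o(q)}=q\bar R\subseteq R$, which is exactly the missing containment.
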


The following two results appear in \cite{preprint}, and since this paper is very hard to find, we include the proofs for sake of completeness. 

\begin{prop}\label{o(R)}\cite{preprint}
Let $R$ be a ring of our type. Then $\mathrm o(R)$ is a numerical semigroup.
\end{prop}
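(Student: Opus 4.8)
The statement has two halves: that $\mathrm o(R)$ is a submonoid of $\NN$, and that its complement in $\NN$ is finite. For the first half, the plan is to read off everything from Proposition~\ref{o}. Since $R\subseteq\KK\llbracket x\rrbracket$, every $f\in R^*$ has finite order, so $\mathrm o(R)\subseteq\NN$; since $1\in R$ and $\mathrm o(1)=0$, we get $0\in\mathrm o(R)$; and if $f_1,f_2\in R^*$ then $f_1f_2\in R^*$ with $\mathrm o(f_1f_2)=\mathrm o(f_1)+\mathrm o(f_2)$ by Proposition~\ref{o}(iii), so $\mathrm o(R)$ is closed under addition. Hence $\mathrm o(R)$ is a submonoid of $(\NN,+)$.

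For the second half, the quickest route is to invoke Proposition~\ref{Nd}: for a ring $R$ of our type, $\lambda_R(\bar R/R)=|\NN\setminus\mathrm o(R)|$. Since by hypothesis $\lambda_R(\bar R/R)<\infty$, the set $\NN\setminus\mathrm o(R)$ is finite, which is exactly the defining property of a numerical semigroup. This finishes the proof.

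If one prefers a self-contained argument not relying on Proposition~\ref{Nd}, I would instead use the conductor ideal $\mathfrak c=\{g\in\bar R\mid g\bar R\subseteq R\}$. Finiteness of $\lambda_R(\bar R/R)$ forces $\mathfrak c\neq 0$; being an ideal of the discrete valuation ring $\bar R=\KK\llbracket x\rrbracket$, it has the form $\mathfrak c=x^{c}\KK\llbracket x\rrbracket$ for some $c\in\NN$. Then $x^{c},x^{c+1},\dots\in\mathfrak c\subseteq R$, so $\{c,c+1,\dots\}\subseteq\mathrm o(R)$ and again $\NN\setminus\mathrm o(R)$ is finite.

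There is no real obstacle here: all the content is already packaged in the preceding propositions, and the only points requiring a word of care are that elements of $R^*$ have finite order (immediate, as $R$ sits inside $\KK\llbracket x\rrbracket$) and that $0\in\mathrm o(R)$ because the constant series $1$ lies in $R$. I would therefore keep the write-up to a few lines, citing Proposition~\ref{o}(iii) for the submonoid property and Proposition~\ref{Nd} for the finiteness of the complement.
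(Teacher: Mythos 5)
Your proof is correct and follows exactly the paper's own route: Proposition~\ref{o}(iii) gives that $\mathrm o(R)$ is a submonoid of $\NN$, and Proposition~\ref{Nd} together with $\lambda_R(\bar R/R)<\infty$ gives the finiteness of the complement. The alternative conductor-ideal argument you sketch is a fine self-contained substitute, but it is not needed.
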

\begin{proof}
Since $R$ is a ring and by (iii) of Proposition \ref{o}, we have that $\mathrm o(R)$ is a subsemigroup of $\NN$. By $\lambda_{R}(\bar{R}/R)<\infty$ and Proposition \ref{Nd}, we have the proof.
\end{proof}

\begin{prop}\label{a}\cite{preprint}
Let $R$ be a ring of our type. Then $R$ contains every element $f\in\KK\llbracket x\rrbracket $ of order $\mathrm o(f)\ge c(\mathrm o(R))$.
\end{prop}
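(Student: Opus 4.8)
The plan is to reduce the statement to a counting/dimension argument via the structure of $\KK\llbracket x\rrbracket$ as an $R$-module, using Proposition \ref{Nd}. Set $S=\mathrm o(R)$ and $c=\mathrm c(S)$. I want to show that the $\KK$-vector space $V=\{f\in\KK\llbracket x\rrbracket : \mathrm o(f)\ge c\}\cup\{0\}$ is contained in $R$. Note that $V=x^{c}\KK\llbracket x\rrbracket$, which is an ideal of $\bar R=\KK\llbracket x\rrbracket$, hence in particular an $R$-submodule of $\bar R$. So it suffices to show $V\subseteq R$.

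First I would observe that $\mathrm o(V\cap R^{*})=\{s\in S : s\ge c\}=c+\NN$, since $c$ and all larger integers lie in $S$ by definition of the conductor. Now consider the ideal $V$ of $\bar R$ and its $R$-submodule $V\cap R$. I claim $\mathrm o(V\cap R)=\mathrm o(V)=c+\NN$: indeed for each integer $n\ge c$ pick $g_n\in R^{*}$ with $\mathrm o(g_n)=n$; after scaling, $g_n\in V\cap R$ and realizes the order $n$. So $V\cap R$ and $V$ have the same set of orders. Since both are $R$-modules with $V\cap R\subseteq V\subseteq\bar R$ and $\lambda_R(\bar R/R)<\infty$, the quotient $V/(V\cap R)$ has finite length; I would then argue that a module with all these orders "filled in" forces $V/(V\cap R)=0$. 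Concretely, if $f\in V\setminus R$ had minimal order $n$ among elements of $V\setminus R$, pick $g\in V\cap R$ with $\mathrm o(g)=n$ and a unit $u\in\KK$ so that $\mathrm M_\mathrm o(f)=u\,\mathrm M_\mathrm o(g)$; then $\mathrm o(f-ug)>n$, and $f-ug\in V$. If $f-ug\in R$ then $f=ug+(f-ug)\in R$, contradiction; if $f-ug\notin R$, then $f-ug$ is an element of $V\setminus R$ of strictly larger order, and iterating produces an element of arbitrarily large order not in $R$ — but every element of $\KK\llbracket x\rrbracket$ of order $\ge c(S)+\lambda_R(\bar R/R)$-ish, one checks, must lie in $R$ once the "gaps" are exhausted; more cleanly, the iteration converges $x$-adically, and since the partial sums $\sum u_i g_i$ lie in $R$ and their limit is $f$, closedness of $R$ in the $x$-adic topology gives $f\in R$.

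The cleanest route, which I would actually write up, avoids the convergence subtlety: prove by descending induction on $\lambda_R(\bar R/R)$, or rather show directly that for any $f\in\KK\llbracket x\rrbracket$ with $\mathrm o(f)\ge c$, the sequence $f_0=f$, $f_{k+1}=f_k-u_k g_{n_k}$ with $n_k=\mathrm o(f_k)$ is well defined with strictly increasing orders all $\ge c$, so $f=\sum_k u_k g_{n_k}$ as an $x$-adically convergent sum; since $R$ is $x$-adically closed in $\KK\llbracket x\rrbracket$ (being a subalgebra over which $\KK\llbracket x\rrbracket$ is a finite-length module, hence $R\supseteq x^N\KK\llbracket x\rrbracket$ for $N$ large, so $R$ is complete), the sum lies in $R$. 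Here one uses that $R$ contains $x^N\KK\llbracket x\rrbracket$ for $N$ the conductor, which is essentially what we are proving — so to avoid circularity I would instead note $R$ is a Noetherian complete local ring (finite over the complete ring... or simply: $R$ is a subring of the complete DVR $\KK\llbracket x\rrbracket$ with $\lambda_R(\bar R/R)<\infty$, hence $R$ is itself complete, being the inverse limit of its own $x$-adic — equivalently $\mathfrak m_R$-adic — quotients), and completeness is what makes the telescoping sum converge inside $R$.

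The main obstacle is the convergence/closedness point: justifying that the infinite telescoping sum $\sum u_k g_{n_k}$ actually lies in $R$ and not merely in $\bar R$. This requires knowing $R$ is $x$-adically closed in $\KK\llbracket x\rrbracket$, which follows from $R$ being a complete Noetherian local ring with $\bar R$ finite over it; the orders $\mathrm o(f_k)$ strictly increase so the sum converges $x$-adically, and completeness of $R$ finishes it. Everything else — that each needed order $n\ge c$ is attained in $R$, that we can match leading monomials by scaling, that $x^{c}\KK\llbracket x\rrbracket$ is the set of series of order $\ge c$ — is immediate from the definitions and from $c=\mathrm c(\mathrm o(R))$ together with Proposition \ref{o}.
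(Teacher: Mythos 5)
Your successive-approximation argument is a genuinely different route from the paper's. The paper's proof is a two-line application of Proposition \ref{R1}: take $R_{1}=R$ and $R_{2}=R+x^{c}\KK\llbracket x\rrbracket$ with $c=\mathrm c(\mathrm o(R))$, check that $R_{2}$ is again a ring of the required type (since $x^{c}\KK\llbracket x\rrbracket$ is an ideal of $\KK\llbracket x\rrbracket$) and that $\mathrm o(R_{2})=\mathrm o(R)$ (an element $r+f$ with $f\in x^{c}\KK\llbracket x\rrbracket$ has order either $\mathrm o(r)$ or at least $c$, and both lie in $\mathrm o(R)$ by definition of the conductor), and conclude $R=R_{2}$, i.e. $x^{c}\KK\llbracket x\rrbracket\subseteq R$. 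Your approach replaces the black-box comparison lemma by an explicit telescoping $f_{k+1}=f_{k}-u_{k}g_{n_{k}}$, which is more elementary and more self-contained, at the price of having to control where the limit lives.

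That last point is the one place your write-up does not hold together as stated. A subring of a complete ring need not be complete ($\KK[x]\subseteq\KK\llbracket x\rrbracket$), and saying that $R$ is ``the inverse limit of its own $x$-adic quotients'' presupposes exactly the completeness you are trying to establish. The non-circular repair is available directly from the hypothesis $\lambda_{R}(\bar R/R)<\infty$: a finite-length $R$-module is annihilated by a finite product of maximal ideals of $R$, which is nonzero because $R$ is a domain and not a field; hence the conductor ideal $(R:\bar R)$ contains some $g\neq 0$, and then $x^{M}\KK\llbracket x\rrbracket=g\KK\llbracket x\rrbracket\subseteq R$ for $M=\mathrm o(g)$. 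This $M$ may be much larger than $c$, so it is strictly weaker than the proposition and there is no circularity. With it you need neither an infinite sum nor any completeness: run your telescoping only until $\mathrm o(f_{K})\ge M$, at which point $f_{K}\in R$ outright and $f=\sum_{k<K}u_{k}g_{n_{k}}+f_{K}\in R$. With that substitution for the completeness paragraph, your proof is correct.
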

\begin{proof}
Use Proposition \ref{R1} with $R_{1}=R$ and $R_{2}=R+x^{\mathrm c(\mathrm o(R))}\KK\llbracket x\rrbracket $.
\end{proof}

This later result allows to work with polynomials instead of series.

Let $f_1,\dots,f_s$ be in $\bar{R}^*$.  Let $R=\KK\llbracket f_1,\dots,f_s\rrbracket $ be a subalgebra of $\KK\llbracket x\rrbracket $ as above, that is, the integral closure of $R$ in its quotient field is $\bar R=\KK\llbracket x\rrbracket $ and $\lambda_{R}(\bar{R}/R)<\infty$.

Under these hypotheses on $R$, we have that $\mathrm o(R)$ is a numerical semigroup (Proposition \ref{o(R)}).

We say that the set  $\{f_1,\dots,f_s\}\subset R^*$ is a \emph{basis} of $R$ if  $\mathrm o(R)=\langle \mathrm o(f_1),\dots,\mathrm o(f_s)\rangle$. The set $\lbrace f_1,\ldots,f_s\rbrace$ is a basis of $R$ if and only if $\mathrm M_\mathrm o(R)=\KK[\mathrm M_\mathrm o(f_1),\ldots,\mathrm M_\mathrm o(f_s)]$.


 \begin{prop}\label{remainder}
 Let the notations be as above. Given $f(x)\in \KK\llbracket x\rrbracket $, there exist $g(x)\in R$ and $r(x)\in \KK\llbracket x\rrbracket $ such that the following conditions hold.
 
 \begin{enumerate}[(i)]
 \item $f(x)=g(x)+r(x)=\sum_{\underline{\alpha}}c_{\underline{\alpha}}f_1^{\alpha_1}×\cdots f_s^{\alpha_s}+r(x)$.
 
 \item If $g(x)\not=0$ (respectively $r(x)\not=0$), then $\mathrm o(g)\geq \mathrm o(f)$ (respectively $\mathrm o(r)\geq \mathrm o(f)$).
 
 \item Either $r(x)=0$ or $\mathrm{supp}(r(x))\subseteq \NN\setminus \langle \mathrm o(f_1),\ldots,\mathrm o(f_s)\rangle$.
 \end{enumerate}
 \end{prop}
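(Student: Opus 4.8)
The plan is to construct $g$ and $r$ by a (possibly infinite but $x$-adically convergent) iterative division process, exactly mimicking the classical division/normal-form algorithm in power series rings, but controlled by orders rather than by a monomial order. Set $n_i=\mathrm o(f_i)$ for $i\in\{1,\dots,s\}$ and write $S=\langle n_1,\dots,n_s\rangle$. First I would handle the trivial case $f=0$ by taking $g=r=0$. Otherwise, let $d=\mathrm o(f)$ and let $\lambda x^d$ be the initial monomial $\mathrm M_\mathrm o(f)$, with $\lambda\in\KK^*$. There are two cases. If $d\in S$, write $d=\alpha_1 n_1+\cdots+\alpha_s n_s$ with $\alpha_i\in\NN$; by (iii) of Proposition~\ref{o} the product $f_1^{\alpha_1}\cdots f_s^{\alpha_s}$ has order exactly $d$, say with initial coefficient $\mu\in\KK^*$. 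Replace $f$ by $f-\tfrac{\lambda}{\mu}f_1^{\alpha_1}\cdots f_s^{\alpha_s}$, which by (i)--(ii) of Proposition~\ref{o} has order strictly larger than $d$, and record the term $\tfrac{\lambda}{\mu}f_1^{\alpha_1}\cdots f_s^{\alpha_s}$ as a summand of $g$. If instead $d\notin S$, record $\lambda x^d$ as a summand of $r$ and replace $f$ by $f-\lambda x^d$, whose order is again strictly larger than $d$.

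Iterating, I obtain a strictly increasing sequence of orders $d_0<d_1<d_2<\cdots$, so the partial sums $g_k=\sum_{j<k}(\text{monomial contributions})$ and $r_k=\sum_{j<k}(\text{monomial contributions})$ converge $x$-adically in $\KK\llbracket x\rrbracket$ to limits $g(x)$ and $r(x)$ with $f=g+r$. Since each monomial put into $g$ is of the form $c\, f_1^{\alpha_1}\cdots f_s^{\alpha_s}$, the limit $g$ lies in the closure of $R$ in $\KK\llbracket x\rrbracket$; here I would use Proposition~\ref{a}: once the running order exceeds $\mathrm c(\mathrm o(R))$ the remaining tail lies entirely in $R$ (its support is a subset of $\{n\ge \mathrm c(\mathrm o(R))\}\subseteq\mathrm o(R)$ — more precisely one absorbs the whole tail in one step using Proposition~\ref{a}), so only finitely many division steps are genuinely needed and $g\in R$ honestly, while by construction every monomial placed in $r$ has exponent in $\NN\setminus S$, giving (iii). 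Property (ii) is immediate: every term added to $g$ (resp. to $r$) has order $\ge d_0=\mathrm o(f)$, hence $\mathrm o(g)\ge\mathrm o(f)$ and $\mathrm o(r)\ge\mathrm o(f)$ whenever the respective series is nonzero.

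The one point that needs care — and which I expect to be the main (mild) obstacle — is the convergence/termination bookkeeping: a priori the process is infinite, and one must argue that it nonetheless produces well-defined elements of $\KK\llbracket x\rrbracket$ with $g$ genuinely in $R$ (not merely in some completion of $R$). The clean way around this is precisely Proposition~\ref{a}: split $f$ as $f^{\le}+f^{>}$ where $f^{>}$ collects all terms of order $\ge\mathrm c(\mathrm o(R))$; then $f^{>}\in R$ by Proposition~\ref{a}, and one applies the finite division procedure above only to $f^{\le}$, which has finite support, so the algorithm terminates after finitely many steps. One then absorbs $f^{>}$ into $g$ at the end. This makes $g$ a genuine element (in fact a polynomial expression) of $R$ and makes (i)--(iii) hold on the nose, with $r$ a polynomial supported on $\NN\setminus S$.
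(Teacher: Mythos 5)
Your proof is correct and follows essentially the same iterative leading-term division as the paper: at each step one either subtracts a scalar multiple of $f_1^{\alpha_1}\cdots f_s^{\alpha_s}$ when $\mathrm o(f)\in\langle \mathrm o(f_1),\ldots,\mathrm o(f_s)\rangle$, or moves the initial monomial into $r$ otherwise, strictly increasing the order each time. The paper simply passes to the $x$-adic limit of the partial sums (reading $R$ as the image of $\KK\llbracket X_1,\ldots,X_s\rrbracket$), whereas you truncate at the conductor via Proposition~\ref{a}; that works too --- only note that the correct reason the process halts is that the running order eventually exceeds $\mathrm c(\mathrm o(R))$, not that $f^{\le}$ has finite support, since each reduction against the power series $f_i$ reintroduces infinitely many higher-order terms.
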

 
 \begin{proof} The assertion is clear if $f\in\KK$. Suppose that $f\notin\KK$ and let $f(x)=\sum_{i\geq p}c_ix^i$ with $p=\mathrm o(f)\ge 0$.
 
 \begin{enumerate}
 
 \item If $p\notin \langle \mathrm o(f_1),\ldots,\mathrm o(f_s)\rangle$, then we set $g^1=0, r^1=c_px^p$ and $f^1=f-c_px^p$.
 
 \item If $p\in \langle \mathrm o(f_1),\ldots,\mathrm o(f_s)\rangle$, then  $c_px^p=c_{\underline{\theta}}\mathrm M_0(f_1)^{\theta_1}\cdots  \mathrm M_0(f_s)^{\theta_s}$. We set
 $g^1=c_{\underline{\theta}}f_1^{\theta^1_1}\cdots  f_s^{\theta^1_s}$, $r^1=0$ and
 $f^1=f-g^1$.
 \end{enumerate}
In such a way that $f=f^1+g^1+r^1$, $g^1\in {R}$, either $r^1=0$ or  $\mathrm{supp}(r^1)\subseteq \NN\setminus \langle \mathrm o(f_1),\ldots,\mathrm o(f_s)\rangle$, and if $f^1\not=0$, then $\mathrm o(f^1)>\mathrm o(f)=p$. Then we restart with $f^1$. We construct in this way sequences $(f^k)_{k\geq 1}, (g^k)_{k\geq 1}, (r^k)_{k\geq 1}$ such that for all $k\geq 1, f=f^k+\sum_{i=1}^kg^i+\sum_{i=1}^kr^i$, and
 $\mathrm o(f)<\mathrm o(f^1)<\dots<\mathrm o(f^k)$, $\sum_{i=1}^kg^i\in{R}$, $\mathrm{supp}(\sum_{i=1}^kr^i)\in \NN\setminus \langle \mathrm o(f_1),\ldots,\mathrm o(f_s)\rangle$ and for all $i<j\leq k$, if $g^i\not=0\not= g^j$ (respectively $r^i\not= 0\not= r^j$), then  $\mathrm o(f)\leq \mathrm o(g^i)<\mathrm o(g^j)$ (respectively $\mathrm o(f)\leq \mathrm o(r^i)<\mathrm o(r^j)$). Clearly  $\lim_{k\longrightarrow +\infty}f^k=0$. Hence, if $g=\lim_{k\longrightarrow +\infty}\sum_{i=1}^kg^i$ and $r=\lim_{k\longrightarrow +\infty}\sum_{i=1}^kr^i$, then $f=g+r$ and $g,r$ satisfy the conditions above. 
 \end{proof}
 
 We denote the series $r(x)$ of the proposition above by $\mathrm R_\mathrm o(f,\lbrace f_1,\ldots,f_s\rbrace)$. This series  depend strongly on step (2) of the proof of Proposition 2.6. Let for example $f_1=x^6,f_2=x^4+x^5, f_3=x^2+x^5$, and let $f=x^4$. We have $f=f_2-x^5=f_3^2-f_1f_2-2x^7+x^{11}$. We shall see that $r(x)$ becomes unique if $f_1,\ldots,f_s$ is a basis of $R$ (see Proposition \ref{uniqueness-remainder}). 

\begin{prop}\label{carac-local-1}
The set $\lbrace f_1,\ldots,f_s\rbrace$ is a basis of ${R}$ if and only if $\mathrm R_\mathrm o(f,\lbrace f_1,\ldots,f_s\rbrace)=0$ for all $f\in{R}$.
\end{prop}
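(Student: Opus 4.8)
The plan is to prove both implications separately, using Proposition~\ref{remainder} as the engine in both directions.

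First, suppose $\{f_1,\ldots,f_s\}$ is a basis of $R$, so that $\mathrm o(R)=\langle \mathrm o(f_1),\ldots,\mathrm o(f_s)\rangle$. Take $f\in R$ and apply Proposition~\ref{remainder} to get $f=g+r$ with $g\in R$, hence $r=f-g\in R$, and either $r=0$ or $\mathrm{supp}(r)\subseteq \NN\setminus\langle \mathrm o(f_1),\ldots,\mathrm o(f_s)\rangle$. If $r\neq 0$, then $\mathrm o(r)\in\mathrm o(R)=\langle \mathrm o(f_1),\ldots,\mathrm o(f_s)\rangle$ because $r\in R^*$; but $\mathrm o(r)=\min\mathrm{supp}(r)\in\NN\setminus\langle \mathrm o(f_1),\ldots,\mathrm o(f_s)\rangle$ by condition (iii) of the proposition, a contradiction. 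Hence $r=0$, i.e.\ $\mathrm R_\mathrm o(f,\{f_1,\ldots,f_s\})=0$.

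Conversely, suppose $\mathrm R_\mathrm o(f,\{f_1,\ldots,f_s\})=0$ for every $f\in R$. We always have $\langle \mathrm o(f_1),\ldots,\mathrm o(f_s)\rangle\subseteq \mathrm o(R)$ since each $f_i\in R^*$, so it suffices to show the reverse inclusion. Let $n\in\mathrm o(R)$ and pick $f\in R^*$ with $\mathrm o(f)=n$. Writing $f=g+r$ as in Proposition~\ref{remainder}, the hypothesis gives $r=0$, so $f=g=\sum_{\underline\alpha}c_{\underline\alpha}f_1^{\alpha_1}\cdots f_s^{\alpha_s}\in R$ is a (formal) combination whose terms are scalar multiples of products $f_1^{\alpha_1}\cdots f_s^{\alpha_s}$, each of order $\alpha_1\mathrm o(f_1)+\cdots+\alpha_s\mathrm o(f_s)\in\langle \mathrm o(f_1),\ldots,\mathrm o(f_s)\rangle$ by Proposition~\ref{o}(iii). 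By part~(i) of Proposition~\ref{o}, $n=\mathrm o(f)$ is at least the minimum of these orders; but more precisely, since $g=f$ is a genuine (convergent in the $x$-adic topology) sum of such terms and $n=\mathrm o(f)$ equals the order of $g$, there must be a nonzero term of order exactly $n$ — otherwise every term, and hence $g$, would have order strictly greater than $n$. Therefore $n\in\langle \mathrm o(f_1),\ldots,\mathrm o(f_s)\rangle$, proving $\mathrm o(R)\subseteq\langle \mathrm o(f_1),\ldots,\mathrm o(f_s)\rangle$ and hence equality.

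The one point demanding a little care — the main obstacle, such as it is — is the last step of the converse: the decomposition produced by Proposition~\ref{remainder} is an infinite sum $g=\lim_k\sum_{i=1}^k g^i$, so I cannot simply say "look at the lowest-order term" without justification. The resolution is that in the construction of the proof of Proposition~\ref{remainder} the partial sums have strictly increasing orders $\mathrm o(g^i)$ starting from $\mathrm o(f)$, and each $g^i$ is (by step (2) of that proof) either zero or a scalar multiple of a single product of the $f_j$'s; the first nonzero $g^i$ has order exactly $\mathrm o(f)=n$, and its order is a value of $\langle \mathrm o(f_1),\ldots,\mathrm o(f_s)\rangle$. So $n$ lies in the monoid generated by the orders of the $f_j$, which is all we need.
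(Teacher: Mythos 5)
Your proof is correct and follows essentially the same route as the paper: the forward direction is identical, and your converse is just the contrapositive of the paper's (the paper assumes $\{f_1,\ldots,f_s\}$ is not a basis, picks $f\in R$ with $\mathrm o(f)\notin\langle \mathrm o(f_1),\ldots,\mathrm o(f_s)\rangle$, and observes that the division of Proposition~\ref{remainder} then produces a nonzero remainder at its very first step). Your extra care about the infinite sum is sound but can be shortened: $r=0$ forces $r^1=0$, which by construction already puts $\mathrm o(f)$ in $\langle \mathrm o(f_1),\ldots,\mathrm o(f_s)\rangle$.
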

 \begin{proof} Suppose that $\lbrace f_1,\ldots,f_s\rbrace$ is a basis of ${R}$ and let $f\in{R}$. Let $r(x)=\mathrm R_\mathrm o(f,\lbrace f_1,\ldots,f_s\rbrace)$. Then $r(x)\in {R}$. If $r\not=0$, then  $\mathrm o(r)\in\langle\mathrm o(f_1),\ldots,\mathrm o(f_s)\rangle$, which is a contradiction.
 
Conversely, suppose that  $\lbrace f_1,\ldots,f_s\rbrace$ is not a basis of ${R}$ and let $0\not=f\in {R}$ such that  $\mathrm o(f)\notin\langle \mathrm o(f_1),\ldots,\mathrm o(f_s) \rangle$. We have  $\mathrm R_\mathrm o(f,\lbrace f_1,\ldots,f_s\rbrace)\not=0$, which contradicts the hypothesis.
\end{proof}
 
 \begin{prop}\label{uniqueness-remainder}
 If $\lbrace f_1,\ldots,f_s\rbrace$ is a basis of ${R}$ then for all $f\in \KK\llbracket x\rrbracket$,  $\mathrm R_\mathrm o(f,\lbrace f_1,\ldots,f_s\rbrace)$ is unique.
 \end{prop}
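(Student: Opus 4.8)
The plan is to show that any two power series that can legitimately arise as a remainder of $f$ with respect to $\{f_1,\ldots,f_s\}$ must coincide; in other words, that the series produced by the construction in the proof of Proposition \ref{remainder} does not depend on the (non-canonical) choices made in its step (2). So I would start from two decompositions
\[
f = g + r = g' + r',
\]
where $g, g' \in R$ and each of $r, r'$ is either zero or has support contained in $\NN \setminus \langle \mathrm o(f_1),\ldots,\mathrm o(f_s)\rangle$, as guaranteed by Proposition \ref{remainder}.

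Next I would set $h = r - r'$ and play its two descriptions against each other. On the one hand, $h = g' - g \in R$, so if $h \neq 0$ then $\mathrm o(h) \in \mathrm o(R)$, and since $\{f_1,\ldots,f_s\}$ is a basis of $R$ this gives $\mathrm o(h) \in \langle \mathrm o(f_1),\ldots,\mathrm o(f_s)\rangle$. On the other hand, $\mathrm{supp}(h) \subseteq \mathrm{supp}(r) \cup \mathrm{supp}(r') \subseteq \NN \setminus \langle \mathrm o(f_1),\ldots,\mathrm o(f_s)\rangle$, hence $\mathrm o(h) = \min\mathrm{supp}(h) \notin \langle \mathrm o(f_1),\ldots,\mathrm o(f_s)\rangle$. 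These two statements contradict each other, so $h = 0$, that is, $r = r'$. (As a byproduct $g = g'$ as well, and of course $r$ is then automatically a polynomial, since $\NN \setminus \mathrm o(R)$ is finite.)

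The argument is short, and the only point requiring any care is the elementary observation that the support of a difference of two power series is contained in the union of their supports, so that the ``forbidden exponents'' property of remainders passes to subtractions; once that is noted, the basis hypothesis in the form $\mathrm o(R) = \langle \mathrm o(f_1),\ldots,\mathrm o(f_s)\rangle$ does the rest. I do not expect a genuine obstacle here: if anything, the subtle part is purely definitional, namely being explicit that ``uniqueness of $\mathrm R_\mathrm o(f,\{f_1,\ldots,f_s\})$'' means independence of the choices in the construction, rather than uniqueness within a single run.
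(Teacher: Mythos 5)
Your proposal is correct and is essentially the paper's own argument: both set the difference of two remainders equal to the difference of the corresponding elements of $R$, observe that its support lies outside $\langle \mathrm o(f_1),\ldots,\mathrm o(f_s)\rangle$ while its order would have to lie inside by the basis hypothesis, and conclude it is zero. The extra remarks you make (support of a difference, independence of choices) are just explicit versions of what the paper leaves implicit.
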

 \begin{proof} Suppose that $f=g_1(x)+r_1(x)=g_2(x)+r_2(x)$ where $g_1(x),g_2(x),r_1(x),r_2(x)$ satisfy conditions (i), (ii), (iii) of Proposition 2.6. We have $r_1(x)-r_2(x)=g_2(x)-g_1(x)\in R$. If $r_1(x)-r_2(x)\not= 0$ then $\mathrm o(r_1(x)-r_2(x))\notin  \langle \mathrm o(f_1),\ldots,\mathrm o(f_s)\rangle$. This contradicts the hypothesis.
 \end{proof}

Let, as above, ${R}=\KK\llbracket f_1,\ldots,f_s\rrbracket $. We shall suppose that $f_i$ is monic for all $1\leq i\leq s$. Define 
$$
\phi:\KK[X_1,\ldots,X_s]\longrightarrow \KK[x],\ \phi(X_i)=\mathrm M_\mathrm o(f_i) \hbox{ for all }i\in\{1,\ldots, s\}.$$ 
Let $\lbrace F_1,\ldots,F_r\rbrace$ be a generating system of the kernel of $\phi$. We can choose all of them to be binomials. If $F_i=X_1^{\alpha^i_1}\cdots  X_s^{\alpha^i_s}-X_1^{\beta^i_1}\cdots  X_s^{\beta^i_s}$, we set $S_i=f_1^{\alpha^i_1}\cdots  f_s^{\alpha^i_s}-f_1^{\beta^i_1}\cdots  f_s^{\beta^i_s}$. Note that if $p=\sum_{k=1}^s\alpha^i_k \mathrm o(f_k)=\sum_{k=1}^s\beta^i_k \mathrm o(f_k)$, then $\mathrm o(S_i)>p$.

\begin{thm}\label{carac-local}
The system $\lbrace f_1,\ldots,f_s\rbrace$ is a basis of ${R}$ if and only if $\mathrm R_\mathrm o(S_i,\lbrace f_1,\ldots,f_s\rbrace)=0$ for all $i\in\{1,\ldots,r\}$.
 \end{thm}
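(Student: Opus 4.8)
The plan is to read this as a Buchberger-type criterion (the analogue, for the present notion of basis, of the SAGBI-basis criterion), with Proposition~\ref{carac-local-1} playing the role of the definition of a Gröbner/SAGBI basis. The implication ``$\Rightarrow$'' is immediate: each $S_i=f_1^{\alpha^i_1}\cdots f_s^{\alpha^i_s}-f_1^{\beta^i_1}\cdots f_s^{\beta^i_s}$ lies in $R$, so if $\{f_1,\dots,f_s\}$ is a basis then Proposition~\ref{carac-local-1} forces $\mathrm R_\mathrm o(S_i,\{f_1,\dots,f_s\})=0$ for every $i$. All the content is therefore in ``$\Leftarrow$''.

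For ``$\Leftarrow$'' assume $\mathrm R_\mathrm o(S_i,\{f_1,\dots,f_s\})=0$ for all $i$. Since $\langle\mathrm o(f_1),\dots,\mathrm o(f_s)\rangle\subseteq\mathrm o(R)$ always holds, it is enough to prove the reverse inclusion; suppose it fails and fix $f\in R^*$ with $\mathrm o(f)\notin\langle\mathrm o(f_1),\dots,\mathrm o(f_s)\rangle$. Grade $\KK[X_1,\dots,X_s]$ by giving $X_i$ the \emph{weight} $\mathrm o(f_i)$ (we may assume $\mathrm o(f_i)\ge 1$), and for a multi-index $\underline\alpha$ set $w(\underline\alpha)=\sum_k\alpha_k\mathrm o(f_k)$. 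The crucial bookkeeping remark is that, since every $f_i$ is monic, Proposition~\ref{o} gives $\mathrm o(f_1^{\alpha_1}\cdots f_s^{\alpha_s})=w(\underline\alpha)$ with leading term $x^{w(\underline\alpha)}$; hence if an element of $R$ is written as a convergent series $\sum_{\underline\alpha}c_{\underline\alpha}f_1^{\alpha_1}\cdots f_s^{\alpha_s}$ and we collect the terms of equal weight, the weight-$n$ part has order $\ge n$, so the least weight occurring is at most the order of the element. Thus, among all representations of our $f$ as such a convergent series, we may fix one for which the least weight $p$ that occurs is \emph{maximal}: the maximum exists because these least weights form a nonempty subset of $\{0,1,\dots,\mathrm o(f)\}$.

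Now look at the weight-$p$ part $g=\sum_{w(\underline\alpha)=p}c_{\underline\alpha}f_1^{\alpha_1}\cdots f_s^{\alpha_s}$ of this chosen representation; its coefficient of $x^p$ is $\sum_{w(\underline\alpha)=p}c_{\underline\alpha}$. If this scalar is nonzero then $\mathrm o(g)=p$ while every other graded piece has order $>p$, so $\mathrm o(f)=p\in\langle\mathrm o(f_1),\dots,\mathrm o(f_s)\rangle$, a contradiction. So $\sum_{w(\underline\alpha)=p}c_{\underline\alpha}=0$, and therefore the weight-homogeneous polynomial $P=\sum_{w(\underline\alpha)=p}c_{\underline\alpha}X_1^{\alpha_1}\cdots X_s^{\alpha_s}$ satisfies $\phi(P)=\big(\sum_{w(\underline\alpha)=p}c_{\underline\alpha}\big)x^p=0$, that is, $P\in\ker\phi$. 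Each binomial $F_i$ is automatically weight-homogeneous, of weight $p_i:=\sum_k\alpha^i_k\mathrm o(f_k)$ (this is precisely what forces $F_i\in\ker\phi$, and also why $\mathrm o(S_i)>p_i$), so by taking weight-$p$ homogeneous components in a relation $P=\sum_i q_iF_i$ we may assume each $q_i$ is weight-homogeneous of weight $p-p_i$, with $q_i=0$ when $p<p_i$. Substituting $f_k$ for $X_k$ gives $g=\sum_i q_i(f_1,\dots,f_s)S_i$, where $q_i(f_1,\dots,f_s)$ is a $\KK$-combination of monomials in $f_1,\dots,f_s$ of weight $p-p_i$ and — by the hypothesis $\mathrm R_\mathrm o(S_i,\{f_1,\dots,f_s\})=0$ together with Proposition~\ref{remainder} — each $S_i$ is a convergent $\KK$-combination of monomials in $f_1,\dots,f_s$ of weight $\ge\mathrm o(S_i)>p_i$. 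Multiplying out, $g$ becomes a convergent $\KK$-combination of monomials in $f_1,\dots,f_s$, each of weight strictly greater than $p$ (a factor from $q_i(f_1,\dots,f_s)$ contributes weight $p-p_i$ and one from $S_i$ contributes weight $>p_i$). Replacing the weight-$p$ part of our representation of $f$ by this new expression yields a representation of $f$ whose least weight is $>p$, contradicting the maximality of $p$. Hence $\mathrm o(R)\subseteq\langle\mathrm o(f_1),\dots,\mathrm o(f_s)\rangle$, so equality holds and $\{f_1,\dots,f_s\}$ is a basis of $R$.

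I expect the main obstacle to be this last paragraph: recognizing that the top-weight cancellation in an arbitrary element of $R$ is governed by a weight-homogeneous element of $\ker\phi$, lifting that element \emph{homogeneously} through the generators $F_i$, and then invoking $\mathrm R_\mathrm o(S_i,\cdot)=0$ to raise the weight strictly — all while keeping every series convergent (which is where $\mathrm o(f_i)\ge 1$ is used). The trick of choosing the representation of $f$ with \emph{maximal} least weight is what replaces an explicit induction and makes the process terminate; it works only because that least weight is bounded above by $\mathrm o(f)$.
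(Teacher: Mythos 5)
Your proof is correct and follows essentially the same route as the paper: the forward direction via Proposition~\ref{carac-local-1}, and for the converse the identical key step of recognizing the top-weight cancellation as a weight-homogeneous element of $\ker(\phi)$, lifting it homogeneously through the binomials $F_i$, and invoking $\mathrm R_\mathrm o(S_i,\cdot)=0$ to rewrite with strictly larger least weight. The only (cosmetic) difference is that you fix a representation of $f$ with \emph{maximal} least weight and derive a contradiction in one step, whereas the paper iterates the rewriting and argues termination from the bound $p\le \mathrm o(f)$; the two bookkeeping devices are equivalent.
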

 \begin{proof} Suppose that $\lbrace f_1,\ldots,f_s\rbrace$ is a basis of ${R}$. Since $S_i\in{R}$ for all $i\in\{1,\ldots,r\}$, then, by Proposition \ref{carac-local-1}, $\mathrm R_\mathrm o(S_i,\lbrace f_1,\ldots,f_s\rbrace)=0$. 
 
For the sufficiency assume to the contrary that $\{f_{1},\ldots, f_{s}\}$ is not a basis of ${R}$. Then there exists $f\in {R}$ such that $\mathrm o(f)\not\in \langle \mathrm o(f_{1}),\ldots, \mathrm o(f_{s})\rangle$. 
Write 
$$
 f=\sum_{\underline{\theta}}c_{\underline{\theta}}f_1^{\theta_1}\cdots  f_s^{\theta_s}.
$$
For all $\underline{\theta}$, if $c_{\underline{\theta}}\not=0$, we set $p_{\underline{\theta}}=\sum_{i=1}^s\theta_i\mathrm o(f_i)=\mathrm o(f_1^{\theta_1}\cdots  f_s^{\theta_s})$. Let $p=\mathrm{min}\lbrace p_{\underline{\theta}}\mid c_{\underline{\theta}}\not=0\rbrace$ and let $\lbrace \underline{\theta}^1,\ldots,\underline{\theta}^l\rbrace$ be such that $p=\mathrm o(f_1^{\theta^i_1}\cdots  f_s^{\theta^i_s})$ for all
 $i\in\{1,\ldots, l\}$ (such a set is clearly finite). Also $p\le \mathrm o(f)<\infty$.
 
If $\sum_{i=1}^lc_{\underline{\theta}^i}\mathrm M_{\mathrm o}(f_1^{\theta^i_1}\cdots  f_s^{\theta^i_s})\not=0$, then $p=\mathrm o(f)\in \langle \mathrm o(f_1),\ldots,\mathrm o(f_s) \rangle$. But this is impossible. Hence,  $\sum_{i=1}^lc_{\underline{\theta}^i}\mathrm M_{\mathrm o}(f_1^{\theta^i_1}\cdots  f_s^{\theta^i_s})=0$, and then 
 $\sum_{i=1}^lc_{\underline{\theta}^i}X_1^{\theta^i_1}\cdots  X_s^{\theta^i_s}\in \ker(\phi)$. Hence
 $$
 \sum_{i=1}^lc_{\underline{\theta}^i}X_1^{\theta^i_1}\cdots  X_s^{\theta^i_s}=\sum_{k=1}^r\lambda_kF_k
 $$
 with $\lambda_k\in\KK[X_1,\ldots,X_s]$ for all $k\in\{1,\ldots,r\}$ (recall that $F_{1},\ldots, F_{r}$ are binomials generating $\ker(\phi)$). This implies that 
 $$
 \sum_{i=1}^lc_{\underline{\theta}^i}f_1^{\theta^i_1}\cdots  f_s^{\theta^i_s}=\sum_{k=1}^r\lambda_k(f_1,\ldots,f_s)S_k.
 $$
From the hypothesis $\mathrm R_\mathrm o(S_k,\lbrace f_1,\ldots,f_s\rbrace)=0$. Hence there is an expression of $S_k$ of the form $S_k=\sum_{\underline{\beta}^k}c_{\underline{\beta}^k} f_1^{\beta^k_1}\cdots f_s^{\beta^k_s}$ with $\mathrm o(f_1^{\beta^k_1}\cdots f_s^{\beta^k_s})\ge \mathrm o(S_k)$. 

So by replacing $ \sum_{i=1}^lc_{\underline{\theta}^i}f_1^{\theta^i_1}\cdots  f_s^{\theta^i_s}$ with $\sum_{k=1}^r\lambda_k(f_1,\ldots,f_s)\sum_{\underline{\beta}^k}c_{\underline{\beta}^k} f_1^{\beta^k_1}\cdots f_s^{\beta^k_s}$ in the expression of $f$, we can rewrite $f$ as $f=\sum_{\underline{\theta}'}c_{\underline{\theta}'}f_1^{\theta'_1}\cdots  f_s^{\theta'_s}$ with $\min\lbrace \mathrm o(f_1^{\theta'_1}\cdots  f_s^{\theta'_s})\mid c_{\underline{\theta}'}\not=0\rbrace>p$. 
 
 Since $\mathrm o(f)<+\infty$, this process will stop, yielding a contradiction.
\end{proof}

 \begin{alg}
 Let the notations be as above.
 
 \begin{enumerate}[1.]
 
 \item If $\mathrm R_\mathrm o(S_k,\lbrace f_1,\ldots,f_s\rbrace)=0$ for all $k\in\{1,\ldots,r\}$, then $\lbrace f_1,\ldots,f_s\rbrace$ is a basis of ${R}$.
 
 \item If $r(x)=\mathrm R_\mathrm o(S_k,\lbrace f_1,\ldots,f_s\rbrace)\not= 0$ for some $k\in\{1,\ldots,r\}$, and if $\mathrm M_\mathrm o(r(x))=ax^{q}$, then we set $f_{s+1}=\frac{1}{a}r(x)$, and we restart with $\lbrace f_1,\ldots,f_{s+1}\rbrace$. Note that in this case, \[\langle \mathrm o(f_1),\ldots,\mathrm o(f_s)\rangle\subsetneq \langle\mathrm o(f_1),\ldots,\mathrm o(f_s),\mathrm o(f_{s+1}) \rangle\subseteq \mathrm o(R).\] 
 
 \end{enumerate}
 This process will stop, giving a basis of ${R}$, because the complement of $\mathrm o(R)$ in $\NN$ is finite.
 
 Observe that $r(x)$ is not in general a polynomial. So we must use a trick to compute it, or at least the relevant part of it. This is accomplished by using Proposition \ref{a}. If in the current step of the algorithm $\langle \mathrm o(f_1),\ldots, \mathrm o(f_s)\rangle$ is a numerical semigroup, then we compute its conductor, say c. Then $c\ge \mathrm c(\mathrm o(R))$. To compute $\mathrm R_\mathrm o(f,\{f_1,\ldots,f_s\})$ we do the following. Let $p=\mathrm o(f)$.
 \begin{enumerate}[1.]
 \item If $p\ge c$, then return 0. We implicitly assume that $x^a$ is in our generating set for $a\in c+\NN$ (though we do not store them).
 \item If $p\in \langle \mathrm o(f_1),\ldots, \mathrm o(f_s)\rangle$, then $\mathrm M_\mathrm o(f)=\sum_{\underline{\theta}} c_{\underline{\theta}}\mathrm M_\mathrm o (f_1)^{\theta_1}\cdots \mathrm M_\mathrm o (f_s)^{\theta_s}$. Set $f=f-\sum_{\underline{\theta}} c_{\underline{\theta}} f_1^{\theta_1}\cdots f_s^{\theta_s}$, and call recursively $\mathrm R_\mathrm o(f,\{f_1,\ldots, f_s\})$ (the process will stop because the order of the new $f$ is larger, and eventually will become bigger than $c$ after a finite number of steps). 
 \item If $p\not\in \langle \mathrm o(f_1),\ldots, \mathrm o(f_s)\rangle$, then return $f$. 
 \end{enumerate}
 
 If $\langle \mathrm o(f_1),\ldots, \mathrm o(f_s)\rangle$ is not a numerical semigroup, let $d$ be its greatest common divisor. Set $c=d\mathrm c(\langle \mathrm o(f_1),\ldots, \mathrm o(f_s)\rangle/d)$. In this case we proceed as follows. 
 \begin{enumerate}[1.]
 \item If $p\ge c$, then return $f$. We cannot ensure here that $f$ will be reduced to zero, so we add it just in case.
 \item If $p\in \langle \mathrm o(f_1),\ldots, \mathrm o(f_s)\rangle$, then $\mathrm M_\mathrm o(f)=\sum_{\underline{\theta}} c_{\underline{\theta}}\mathrm M_\mathrm o (f_1)^{\theta_1}\cdots \mathrm M_\mathrm o (f_s)^{\theta_s}$. Set $f=f-\sum_{\underline{\theta}} c_{\underline{\theta}} f_1^{\theta_1}\cdots f_s^{\theta_s}$, and call recursively $\mathrm R_\mathrm o(f,\{f_1,\ldots, f_s\})$. 
 \item If $p\not\in \langle \mathrm o(f_1),\ldots, \mathrm o(f_s)\rangle$, then return $f$. One might check first if $d$ does not divide $p$, because in this case for sure $p\not\in \langle \mathrm o(f_1),\ldots, \mathrm o(f_s)\rangle$.
 \end{enumerate}

Observe that by adding the conditions $p\ge c$, we are avoiding entering in an eventual infinite loop.
\end{alg}
 
%
%
%
%
 
 \medskip
 
Suppose that $\lbrace f_1,\ldots,f_s\rbrace$ is a basis of ${R}$. Also suppose that for all $i\in\{1,\ldots,s\}$, $f_i$ is monic. We say that $\lbrace f_1,\ldots,f_s\rbrace$ is  a \emph{minimal basis} of ${R}$ if $\mathrm o(f_1),\ldots,\mathrm o(f_s)$ generate minimally the semigroup $\mathrm o({R})$. We say that $\lbrace f_1,\ldots,f_s\rbrace$ is a \emph{reduced basis} of ${R}$ if $\mathrm{supp}(f_i(x)-\mathrm M_\mathrm 0(f_i))\subseteq \NN\setminus \mathrm o({R})$. Let $i\in\{1,\ldots,s\}$. If $\mathrm o(f_i)\in\langle \mathrm o(f_1),\ldots,\mathrm o(f_{i-1}),\mathrm o(f_{i+1}),\ldots,\mathrm o(f_s) \rangle$, then $\lbrace f_1,\ldots,f_{i-1},f_{i+1},\ldots,f_s\rbrace$ is also a basis of ${R}$.
Furthermore, by applying the division process of Proposition \ref{remainder} to $f_i-\mathrm M_\mathrm o(f_i)$, we can always construct a reduced basis of ${R}$.
 
\begin{cor}
The algebra ${R}$ has a unique minimal reduced  basis.
\end{cor}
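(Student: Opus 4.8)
The plan is to treat existence and uniqueness of the minimal reduced basis separately, the existence part being essentially already carried out in the paragraph preceding the statement.

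For existence, I would start from a basis of $R$ (one is produced by the algorithm following Theorem \ref{carac-local}) and repeatedly discard any generator whose order lies in the monoid generated by the orders of the remaining ones; as observed just above, what is left is still a basis, so after finitely many steps I obtain a \emph{minimal} basis $\{f_1,\dots,f_e\}$, which after scaling I may assume monic. Since the minimal generating system of the numerical semigroup $\mathrm o(R)$ is unique, $\{\mathrm o(f_1),\dots,\mathrm o(f_e)\}$ must coincide with it; in particular the $\mathrm o(f_i)$ are pairwise distinct and $\langle\mathrm o(f_1),\dots,\mathrm o(f_e)\rangle=\mathrm o(R)$. Then for each $i$ I would run the division process of Proposition \ref{remainder} on $h_i=f_i-\mathrm M_\mathrm o(f_i)$, writing $h_i=g+r$ with $g\in R$ and either $r=0$ or $\mathrm{supp}(r)\subseteq\NN\setminus\langle\mathrm o(f_1),\dots,\mathrm o(f_e)\rangle=\NN\setminus\mathrm o(R)$, and replace $f_i$ by $f_i-g$. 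This stays inside $R$, and since $\mathrm o(g)\ge\mathrm o(h_i)>\mathrm o(f_i)$ whenever $g\neq0$, the new element $f_i-g=\mathrm M_\mathrm o(f_i)+r$ is still monic of order $\mathrm o(f_i)$ with tail supported off $\mathrm o(R)$. The set of orders is unchanged throughout, so the outcome is a minimal reduced basis.

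For uniqueness, suppose $\{f_1,\dots,f_e\}$ and $\{g_1,\dots,g_e\}$ are two minimal reduced bases. First I would note that a minimal basis cannot contain two generators of equal order (dropping one would still leave a basis, contradicting minimality), so in each basis the orders are distinct; combined with the uniqueness of the minimal generating system of $\mathrm o(R)$, I may reorder so that $\mathrm o(f_i)=\mathrm o(g_i)=:n_i$ for every $i$. By monicity and reducedness, $f_i=x^{n_i}+r_i$ and $g_i=x^{n_i}+r_i'$ with $\mathrm{supp}(r_i),\mathrm{supp}(r_i')\subseteq\NN\setminus\mathrm o(R)$, hence $f_i-g_i=r_i-r_i'$ lies in $R$ (as $R$ is a ring) and has support contained in $\NN\setminus\mathrm o(R)$. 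If $f_i-g_i\neq0$, then $\mathrm o(f_i-g_i)$ would belong both to $\mathrm o(R)$ and to its complement, which is absurd; therefore $f_i=g_i$.

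I do not expect a serious obstacle here. The only delicate point is the reduction in the uniqueness argument, namely that two minimal reduced bases are forced to carry the same orders; this is immediate once one invokes the uniqueness of the minimal generating set of a numerical semigroup (recalled in the introduction) together with the remark above about distinctness of orders in a minimal basis. Everything else is bookkeeping with the order function and Proposition \ref{remainder}.
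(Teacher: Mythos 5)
Your argument is correct and follows essentially the same route as the paper: existence via discarding redundant generators and reducing each $f_i-\mathrm M_\mathrm o(f_i)$ with Proposition~\ref{remainder}, and uniqueness by matching orders through the unique minimal generating set of $\mathrm o(R)$ and observing that the difference of two corresponding generators lies in $R$ yet has support in $\NN\setminus\mathrm o(R)$, forcing it to vanish. The only difference is that you spell out the existence step in more detail than the paper, which relegates it to the preceding paragraph.
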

\begin{proof} Let $\lbrace f_1,\ldots,f_s\rbrace$ and $\lbrace g_1,\ldots,g_{s'}\rbrace$ be two minimal reduced bases of ${R}$. Hence $s$ is the embedding dimension of $\mathrm o({R})$, and the same holds for $s'$; whence they are equal. Let $i=1$. There exists $j_1$ such that $\mathrm o(f_1)=\mathrm o(g_{j_1})$, because minimal generating systems of numerical semigroups are unique. If $f_1-g_{j_1}\not= 0$, then $\mathrm o(f_1-g_{j_1})\notin \mathrm o({R})$ (the basis is reduced), which is a contradiction because $f_1-g_{j_1}\in{R}$. The same argument shows that $\lbrace f_1,\ldots,f_s\rbrace=\lbrace g_1,\ldots,g_{s}\rbrace$
\end{proof}

 \begin{rem}{\rm  Let ${R}=\KK\llbracket f_1,\ldots,f_s\rrbracket $ and assume that $f_i$ is monic for all $1\leq i\leq s$. Also assume that $\mathrm o(f_1)\leq \mathrm o(f_2)\leq\ldots\leq \mathrm o(f_s)$. Set $n=\mathrm o(f_1)$ and let $f_1=x^{n}+\sum_{i> n}c^1_ix^i$. By an analytic change of variables, we may assume that $f_1=x^n$, hence, up to an analytic isomorphism, we may assume that ${R}=\KK\llbracket x^n,f_2,\ldots,f_s\rrbracket $. In particular,  we may assume that ${R}$ has a minimal reduced basis of the form ${x^n, g_2(x),\ldots,g_{s'}(x)}$. 
 }
 \end{rem}
%
 
\begin{ex}\label{io}
Let ${R}=\KK\llbracket x^{4}+x^{5},x^{6},x^{15}+x^{16}+ \sum_{n\ge 20} x^n\rrbracket $, with $\KK$ a field
of characteristic zero. Then ${R}$ is a
one-dimensional ring. Since  the conductor of $\langle 4,6,15\rangle$ is 18, we have that $\lambda_{R}(\KK\llbracket X\rrbracket /{R})<\infty$ and we know by Propsition \ref{a} that $R=\KK\llbracket x^{4}+x^{5},x^{6},x^{15}+x^{16}\rrbracket$. 
Let us denote $x^{4}+x^{5}$
by $f_{1}$, $x^{6}$ by $f_{2}$ and $x^{15}+x^{16}$ by $f_{3}$. The kernel of $\phi: \KK[X_1,X_2,X_3]\longrightarrow\KK[x]$, $\phi(X_1)=x^4$, $\phi(X_2)=x^6$ and $\phi(X_3)=x^{15}$ is generated by $X_1^3-X_2^2, X_3^2-X_2^5$, hence we get $S_1= x^{13}+x^{14}+\frac{1}3 x^{15}$ and $S_2=x^{31}+\frac{1}2 x^{32}$. As $13\not\in \langle 4,6,15\rangle$, we add it as $f_4=S_1$. We do not care about $S_2$, because the conductor of $\langle 4,6,15\rangle$ is 18.

Now, the conductor of $\langle 4,6,13,15\rangle$ is $12$. If we compute a system of generators of the kernel of $\varphi: \KK[X_1,X_2,X_3, X_4]\longrightarrow\KK[x]$, $\phi(X_1)=x^4$, $\phi(X_2)=x^6$, $\phi(X_3)=x^{15}$ and $\phi(X_4)=x^{13}$, then all the elements $S_i$ have orders greater than $12$, and so the algorithm ends. We conclude that $\mathrm o(R)=\langle 4,6,13,15\rangle$.

We have implemented this algorithm in the \texttt{numericalsgps} (\cite{numericalsgps}) \texttt{GAP} (\cite{gap}) package. Next we illustrate how to compute this semigroup with the functions we have implemented (that will be available in the next release of the package).

\begin{verbatim}
gap> x:=X(Rationals,"x");;
gap> l:=[x^4+x^5,x^6,x^15+x^16];;
gap> s:=SemigroupOfValuesOfCurve_Local(l);;
gap> MinimalGeneratingSystem(s);
[ 4, 6, 13, 15 ]
gap> SemigroupOfValuesOfCurve_Local(l,13);
x^13
\end{verbatim}

\end{ex}
 
\begin{rem}\label{e}
It is known (cf. \cite[Section II.1]{1}) that there exist relations between algebraic characteristics and invariants of the semigroup $\mathrm o({R})$ and the ring ${R}$. Hence, in the Example \ref{io}, from $\mathrm o({R})=\langle 4,6,13,15 \rangle=\{0,4,6,8,10,12,\longrightarrow\}$, we deduce that the conductor of $\mathrm o(R)$ is 12. The conductor of $R$ in $\bar{R}$ is precisely $(x^{\mathrm c(\mathrm o(R))})$ (\cite{1}). We have that $\lambda_{R}(\bar {R}/{R})=|[0,\mathrm c(\mathrm o({R}))-1]\cap (\NN\setminus \mathrm o({R}))|=7$ counts the number of gaps of $\mathrm o(R)$ (\cite[Proposition 1]{4}). The integer $\lambda_{R}(\bar {R}/{R})$ is the degree of singularity of $R$, and this is why the genus of $\mathrm o(R)$ is called by some authors the degree of singularity of the semigroup (see \cite{1,4}). In the setting of Weirstrass numerical semigroups the genus coincides with the geometrical genus of the curve used to define the semigroup (\cite{centina}).

The number of sporadic elements of $\mathrm o(R)$ is $\lambda_{R}({R}/({R}:\bar {R}))=|[0,\mathrm c(\mathrm o({R}))-1]\cap \mathrm o({R})|=5$. 

The ring $R$ is Cohen-Macaulay and its type is less than or equal to $\#\mathrm T(\mathrm o({R}))=3$, where for a numerical semigroup $\Gamma$, $\mathrm T(\Gamma)=\{x\in \mathbb Z\setminus \Gamma \mid x+\Gamma^*\subseteq \Gamma\}$ (\cite[Proposition II.1.16]{1}).  According also to \cite[Proposition II.1.16]{1},  equality holds if and only if $\mathrm o(R:\mathfrak m) = \mathrm T(\mathrm o(R))$. However $\mathrm T(\mathrm o(R))=\{2,9,11\}$ and $2\not\in \mathrm o(R:\mathfrak m)$. So in our example we get an strict inequality.

\end{rem}

\begin{ex}
Let ${R}=\KK\llbracket x^{4},x^{6}+x^{7},x^{13}+a_{14}x^{14}+a_{15}x^{15}+\dots\rrbracket $
with $\KK$ a field. Using the same argument as in the Example \ref{io}, we find that if char $\KK\ne 2$, we have that if $a_{15}-a_{14}+1/2 =0$, then $\{x^{4},x^{6}+x^{7},x^{13}\}$ is the reduced basis of ${R}$. Furthermore, since $\langle 4,6,13\rangle$ is a symmetric numerical semigroup
(the number of nonnegative integers not in the semigroup equals the conductor divided by two), then, by \cite{3}, ${R}$ is Gorenstein. Finally $\lambda_{R}(\bar{R}/{R})=8$. Otherwise if $a_{15}-a_{14}+1/2 \neq 0$, then
$\{x^{4},x^{6}+x^{7},x^{13},x^{15}\}$ is the reduced basis of ${R}$
with $R$ a non Gorenstein ring. Furthermore $\lambda_{R}(\bar{R}/{R})=7$.

Otherwise, if char $\KK=2$, then the reduced basis of ${R}$ is $\{x^{4},x^{6}+x^{7}, x^{13},x^{15}\}$ and ${R}$ is not a Gorenstein ring. Here, $\lambda_{R}(\bar{R}/{R})=7$.
\end {ex}


\begin{ex}
Let ${R}=\KK\llbracket x^{8},x^{12}+x^{14}+x^{15}\rrbracket $, with $\KK$ a field of
characteristic zero.
Using the same argument as in the Example
\ref{io}, we have that
\begin{multline*}
\{x^8,x^{12}+x^{14}+x^{15},x^{26}+x^{27}+x^{29}-\frac{1}{2} x^{31},\\
x^{53}+\frac{1}{2}x^{55}-\frac{1}{2}x^{57}-
\frac{1}{8}x^{63}+
\frac{25}{8}x^{67}-\frac{95}{32}x^{71}-
\frac{15}{16}x^{75}-\frac{135}{32}x^{83}\}
\end{multline*}        is the reduced basis of the
Gorenstein ring ${R}$. Furthermore, we have $\lambda_{{R}}(\bar{R}/{R})=42$.
\begin{verbatim}
gap> l:=[x^8,x^12+x^14+x^15];;
gap> SemigroupOfValuesOfCurve_Local(l,"basis");
[ x^8, x^15+x^14+x^12, -1/2*x^31+x^29+x^27+x^26, 
-135/32*x^83-15/16*x^75-95/32*x^71+25/8*x^67-1/8*x^63-1/2*x^57+1/2*x^55+x^53 ]
\end{verbatim}
\end{ex}
\begin{ex}
The following battery of examples was provided by Lance Bryant as a test for our algorithm.
\begin{verbatim}
gap> l:=[ [ x^6,x^8+x^9,x^19], [x^7,x^9+x^10,x^19,x^31], [x^7,x^21+x^28+x^33], 
[x^4,x^6+x^7,x^13], [x^6,x^8+x^11,x^10+2*x^13,x^21], [x^5,-x^18-x^21,-x^23,-x^26], 
[x^5,-x^18-x^21,-x^26], [x^5,-x^18-x^21,x^23-x^26], [x^6,x^9+x^10,x^19], 
[x^7,x^9+x^10,x^19], [x^8,x^9+x^10,x^19], [x^7,x^9+x^10,x^17,x^19] ] ;;
gap> List(l, i->MinimalGeneratingSystem(SemigroupOfValuesOfCurve_Local(i)));
[ [ 6, 8, 19, 29 ], [ 7, 9, 19, 29, 31 ], [ 7, 33 ], [ 4, 6, 13, 15 ], 
  [ 6, 8, 10, 21, 23, 25 ], [ 5, 18, 26, 39, 47 ], [ 5, 18, 26, 39, 47 ], 
  [ 5, 18, 26, 39, 47 ], [ 6, 9, 19, 20 ], [ 7, 9, 19, 29 ], [ 8, 9, 19, 30 ], 
  [ 7, 9, 17, 19, 29 ] ]
\end{verbatim}
\end{ex}

\begin{rem}
We do not know a priori if $\bar{R}\neq\KK[\![x]\!]$ and we do not have a general procedure to check it. If the algorithm is called with such an $R$, it will eventually not stop.
\end{rem}
 
\section{Deformation to a toric ideal}\label{deformation-local}
Let the notations be as in Section \ref{formal-space}. Given $f(x)=\sum_{i\geq p}c_ix^i\in\KK\llbracket x\rrbracket $, we set $H_f(u,x)=\sum_{i\geq p}c_iu^{i-p}x^i$. In particular, if we consider the linear form $L:\NN^2\longrightarrow \NN, L(a,b)=b-a$, then $H_f$ is $L$-homogeneous of order $p$, that is, $L(i-p,i)=p$ for all $i\in\mathrm{supp}(f)$. We set $H_{R}=\KK\llbracket u, H_f \mid f\in{R}\rrbracket $. With these notations we have the following.
 
\begin{prop} \label{homogeneous-local}
The set $\lbrace f_1,\ldots,f_s\rbrace$ is a basis of ${R}$ if and only if $H_{R}=\KK\llbracket u,H_{f_1},\ldots,H_{f_s}\rrbracket $.
\end{prop}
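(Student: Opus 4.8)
The plan is to move the combinatorics of orders through the homogenization $f\mapsto H_f$, using only these elementary properties, all immediate from the definition of $H_f$: $H_{fg}=H_fH_g$ and $H_{cf}=cH_f$ for $c\in\KK$; $H_f$ is $L$-homogeneous of $L$-degree $\mathrm o(f)$; and the substitution $u\mapsto 0$ defines a continuous $\KK$-algebra homomorphism $\varepsilon_0\colon\KK\llbracket u,x\rrbracket\to\KK\llbracket x\rrbracket$ with $\varepsilon_0(u)=0$ and $\varepsilon_0(H_f)=\mathrm M_\mathrm o(f)$. Write $H_{R'}=\KK\llbracket u,H_{f_1},\ldots,H_{f_s}\rrbracket$. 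Since each $f_i\in R$, the inclusion $H_{R'}\subseteq H_R$ is automatic, so the statement amounts to: $H_R\subseteq H_{R'}$ exactly when $\{f_1,\ldots,f_s\}$ is a basis.

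($\Rightarrow$) Assume $\{f_1,\ldots,f_s\}$ is a basis; I must show $H_f\in H_{R'}$ for every $f\in R$. By Proposition~\ref{carac-local-1}, $\mathrm R_\mathrm o(f,\{f_1,\ldots,f_s\})=0$, so the division process of Proposition~\ref{remainder} writes $f$ as a finite or convergent sum $f=\sum_k g^k$ with $g^k=c_k f_1^{\theta^k_1}\cdots f_s^{\theta^k_s}$ and $\mathrm o(f)\le\mathrm o(g^1)<\mathrm o(g^2)<\cdots\to\infty$. Comparing the coefficient of each $x^j$ on both sides (a finite comparison, since $\mathrm o(g^k)\to\infty$), and using $f=\sum_k g^k$, gives
\[
H_f=\sum_k u^{\,\mathrm o(g^k)-\mathrm o(f)}\,H_{g^k}=\sum_k c_k\,u^{\,\mathrm o(g^k)-\mathrm o(f)}\,H_{f_1}^{\theta^k_1}\cdots H_{f_s}^{\theta^k_s}.
\]
Every summand lies in $H_{R'}$, the exponents of $u$ are nonnegative, and the $(u,x)$-order of the $k$-th summand is $2\mathrm o(g^k)-\mathrm o(f)\to\infty$, so the series converges in $\KK\llbracket u,x\rrbracket$; as $H_{R'}$ is closed, $H_f\in H_{R'}$. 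Hence $H_R=H_{R'}$.

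($\Leftarrow$) Assume $H_R=H_{R'}$. Since $\mathrm o(f_i)\in\mathrm o(R)$ for all $i$ and $\mathrm o(R)$ is a semigroup (Proposition~\ref{o(R)}), $\langle\mathrm o(f_1),\ldots,\mathrm o(f_s)\rangle\subseteq\mathrm o(R)$; I must prove the reverse inclusion. Let $n\in\mathrm o(R)$, say $n=\mathrm o(f)$ with $f\in R^*$. Then $H_f\in H_{R'}$, so $H_f=\sum_{\underline a}c_{\underline a}\,u^{a_0}H_{f_1}^{a_1}\cdots H_{f_s}^{a_s}$ for suitable scalars $c_{\underline a}$, the sum converging in $\KK\llbracket u,x\rrbracket$. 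Applying $\varepsilon_0$, all terms with $a_0\ge 1$ vanish and (using continuity to pass $\varepsilon_0$ through the sum, and that the $f_i$ are monic)
\[
\mathrm M_\mathrm o(f)=\varepsilon_0(H_f)=\sum_{\underline a\,:\,a_0=0}c_{\underline a}\,x^{\,a_1\mathrm o(f_1)+\cdots+a_s\mathrm o(f_s)}.
\]
The left-hand side is a nonzero scalar times $x^n$, so comparing coefficients of $x^n$ forces the existence of $\underline a$ with $a_0=0$ and $a_1\mathrm o(f_1)+\cdots+a_s\mathrm o(f_s)=n$; thus $n\in\langle\mathrm o(f_1),\ldots,\mathrm o(f_s)\rangle$, and $\{f_1,\ldots,f_s\}$ is a basis.

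The only delicate point I expect is the convergence bookkeeping in the two displays: that the identity for $H_f$ holds coefficientwise, that the rearranged series converge in the $(u,x)$-adic topology, and that $\varepsilon_0$ commutes with infinite sums. A minor notational convention, easily absorbed, is that $\KK\llbracket u,H_{f_1},\ldots,H_{f_s}\rrbracket$ denotes the closed $\KK$-subalgebra generated by $u$ and the $H_{f_i}$ (equivalently, one may assume $\mathrm o(f_i)\ge 1$, so that the $H_{f_i}$ lie in the maximal ideal of $\KK\llbracket u,x\rrbracket$ and the expansions $\sum c_{\underline a}u^{a_0}H_{f_1}^{a_1}\cdots H_{f_s}^{a_s}$ are literal power-series substitutions); with this reading everything above goes through verbatim.
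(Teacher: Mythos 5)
Your proof is correct and follows essentially the same route as the paper: the forward direction homogenizes the division process of Proposition~\ref{remainder} term by term (the paper does the same iteratively, cancelling $\mathrm M_\mathrm o(f)$ against a monomial in the $\mathrm M_\mathrm o(f_i)$ and restarting), and the converse sets $u=0$ to land in $\KK[\mathrm M_\mathrm o(f_1),\ldots,\mathrm M_\mathrm o(f_s)]$. Your extra care with the coefficientwise identity, the convergence of the rearranged series, and the continuity of the specialization $u\mapsto 0$ is exactly the bookkeeping the paper leaves implicit.
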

\begin{proof} Note first that if $H_g\in H_{R}$ for some $g\in \KK\llbracket x\rrbracket$, then $g\in R$. Suppose that $\lbrace f_1,\ldots,f_s\rbrace$ is a basis of ${R}$ and let $f(x)\in{R}$. Write
$H_f(u,x)=\sum_{i\geq p}c_iu^{i-p}x^i$. We have $\mathrm M_\mathrm o(f)=c_px^p=c_p\prod_{i=1}^s\mathrm M_\mathrm o(f_i)^{p^k_i}$, hence
$$
H_f-c_p\prod_{i=1}^sH_{f_i}^{p^k_i}=u^qH_{f^1}
$$
with $f^1\in {R}$ and either $f^1=0$, or $\mathrm o(f^1)>p$. In the second case we restart with $f^1$. A similar argument as in Proposition 2.6 proves our assertion.

Conversely, suppose that $H_{R}=\KK\llbracket u, H_{f_1},\ldots,H_{f_s}\rrbracket $ and let $f\in{R}$. Let $P(X_0,X_1,\ldots,X_s)\in\KK\llbracket X_0,X_1,\ldots,X_s\rrbracket $ such that $H_f=P(u,H_{f_1},\ldots,H_{f_s})$. Setting $u=0$, we get that  $\mathrm M_\mathrm o(f)=P(\mathrm M_\mathrm o(f_1),\ldots,\mathrm M_\mathrm o(f_s))\in \KK\llbracket \mathrm M_\mathrm o(f_1),\ldots,\mathrm M_\mathrm o(f_s)\rrbracket $. Hence $\mathrm M_\mathrm o(f)\in \KK[\mathrm M_\mathrm o(f_1),\ldots,\mathrm  M_\mathrm o(f_s)]$.
\end{proof}

Suppose that $\lbrace f_1,\ldots,f_s\rbrace$ is a basis of ${R}$. Then $T=\KK\llbracket u\rrbracket \llbracket H_{f_1},\ldots,H_{f_s}\rrbracket $  is a $\KK\llbracket u\rrbracket$-module. When $u=1$ (respectively $u=0$), we get $T\mid_{u=1}={R}$ (respectively $T\mid_{u=0}=\KK\llbracket M(f_1),\ldots,M(f_s)\rrbracket $). Hence we get a deformation from ${R}$ to $\KK\llbracket \mathrm M_\mathrm o(f_1),\ldots,\mathrm M_\mathrm o(f_s)\rrbracket $. More precisely let 
$$
\psi:\KK\llbracket X_1,\ldots,X_s\rrbracket \longrightarrow R=\KK\llbracket {f_1},\ldots,{f_s}\rrbracket 
$$
and 
$$
H_{\psi}:\KK\llbracket u\rrbracket \llbracket X_1,\ldots,X_s\rrbracket \longrightarrow T=\KK\llbracket u\rrbracket \llbracket H_{f_1},\ldots,H_{f_s}\rrbracket 
$$
be the morphisms of rings such that $H_{\psi}(u)=u$, $\psi(X_i)=f_i$ and $H_{\psi}(X_i)=H_{f_i}$ for all $i\in \{1,\ldots,s\}$. Let,  as in Section \ref{formal-space}, 
$$
S_i=f_1^{\alpha^i_1}\cdots  f_s^{\alpha^i_s}-f_1^{\beta^i_1}\cdots  f_s^{\beta^i_s}=\sum_{\underline{\theta}^i}c^i_{\underline{\theta}^i}f_1^{\theta^i_1}\cdots  f_s^{\theta_s^i}, 1\leq i\leq r
$$ 
with $\mathrm o(f_1^{\theta^i_1}\cdots  f_s^{\theta_s^i})=D^i_{\underline{\theta}^i}>\sum_{k=1}^s\alpha^i_k\mathrm o(f_k)=\sum_{k=1}^s\beta^i_k\mathrm o(f_k)=p_i$. Let $I$ (respectively $J$) be the ideal generated by $(G_i=X_1^{\alpha^i_1}\cdots  X_s^{\alpha^i_s}-X_1^{\beta^i_1}\cdots  X_s^{\beta^i_s}-\sum_{\underline{\theta}^i}c^i_{\underline{\theta}^i}X_1^{\theta^i_1}\cdots  X_s^{\theta_s^i})_{1\leq i\leq r}$ (respectively $(H_i=X_1^{\alpha^i_1}\cdots  X_s^{\alpha^i_s}-X_1^{\beta^i_1}\cdots  X_s^{\beta^i_s}-\sum_{\underline{\theta}^i}u^{D^i_{\underline{\theta}^i}-p_i}c^i_{\underline{\theta}^i}X_1^{\theta^i_1}\cdots  X_s^{\theta_s^i})_{1\leq i\leq r}$) in $\KK\llbracket X_1,\ldots,X_s\rrbracket $ (respectively $\KK\llbracket u\rrbracket \llbracket X_1,\ldots,X_s\rrbracket $). 

Well shall consider on ${\mathbb N}^s$ (respectively, $\mathbb{N}^{s+1}$) the linear form 
\[\mathrm O(\theta_1,\ldots,\theta_s)=\sum_{i=1}^s\theta_i\mathrm o(f_i)\] 
(respectively $\mathrm O_h(\theta_0,\theta_1,\ldots,\theta_s)=-\theta_0+\sum_{i=1}^s\theta_i \mathrm o(f_i)$).

Given a monomial $X_1^{\theta_1}\cdots X_s^{\theta_s}$ (respectively $u^{\theta_0} X_1^{\theta_1}\cdots X_s^{\theta_s}$), we set $\mathrm O(X_1^{\theta_1}\cdots X_s^{\theta_s})=\mathrm O(\theta_1,\ldots,\theta_s)$ (respectively $\mathrm O_h(u^{\theta_0} X_1^{\theta_1}\cdots X_s^{\theta_s})=\mathrm O_h(\theta_0,\theta_1,\ldots,\theta_s)$). 

For $G=\sum_{\theta} c_{\theta}X_1^{\theta_1}\cdots X_s^{\theta_s}$ (respectively $H=\sum_{\theta} c_{\theta}u^{\theta_0}X_1^{\theta_1}\cdots X_s^{\theta_s}$), we say that $G$ (respectively $H$) is $\mathrm O$-homogeneous of order $a$ (respectively $\mathrm O_h$-homogeneous of order $b$) if  $\mathrm O(\theta_1,\ldots,\theta_s)=a$ (respectively $\mathrm O_h(\theta_0,\theta_1,\ldots,\theta_s)=b$) for all $(\theta_1,\ldots,\theta_s)$ (respectively $(\theta_0,\theta_1,\ldots,\theta_s)$) such that $c_{\theta}\not=0$. More generally let $G=\sum_{k\geq 0}G_{p_k}$ where $p_0<p_1<\cdots$ and $G_{p_k}$  is $\mathrm O$-homogeneous of order $p_k$. We set $\mathrm O(G)=p_0$. We also set $\mathrm{in}(G)=G_{p_0}$ and we call it the \emph{initial form} of $G$. 

We finally set $\mathrm O(0)=+\infty$, and we recall that $\mathrm O(G)=+\infty$ if and only if $G=0$.


\begin{lem}\label{localkernel} With the standing notations and hypothesis, the kernel of $\psi$  is generated by $I$. 
\end{lem}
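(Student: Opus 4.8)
Given that $\psi\colon \KK\llbracket X_1,\ldots,X_s\rrbracket \to R$ is surjective with $\psi(X_i)=f_i$, and that $I$ is generated by the $G_i$, the inclusion $I\subseteq \ker\psi$ is immediate since $\psi(G_i)=S_i-S_i=0$ by the very definition of the expansion $S_i=\sum_{\underline\theta^i}c^i_{\underline\theta^i}f_1^{\theta^i_1}\cdots f_s^{\theta^i_s}$. So the plan is to prove the reverse inclusion $\ker\psi\subseteq I$.

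First I would take $0\neq P\in \ker\psi$ and look at its \emph{initial form} $\mathrm{in}(P)$ with respect to the linear form $\mathrm O$, say of $\mathrm O$-order $a$. Since $\psi(P)=0$ and the lowest-order terms of $\psi(P)$ come precisely from applying $\psi$ to $\mathrm{in}(P)$, the sum of those lowest-order contributions must cancel in $\KK\llbracket x\rrbracket$; that is, $\phi(\mathrm{in}(P))=0$ where $\phi\colon \KK[X_1,\ldots,X_s]\to \KK[x]$, $\phi(X_i)=\mathrm M_\mathrm o(f_i)$, is the monomial-algebra map from Section~\ref{formal-space} (the point being that $\{f_1,\ldots,f_s\}$ is a basis, so $\mathrm o$ of the $\mathrm O$-homogeneous piece $\psi(\mathrm{in}(P))$ is exactly $a$ unless the leading terms cancel). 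Hence $\mathrm{in}(P)\in \ker\phi = \langle F_1,\ldots,F_r\rangle$, so $\mathrm{in}(P)=\sum_k \mu_k F_k$ with $\mu_k$ $\mathrm O$-homogeneous (one may homogenize the coefficients, discarding terms of the wrong $\mathrm O$-degree since $\ker\phi$ is $\mathrm O$-homogeneous). Now replace $P$ by $P'=P-\sum_k \mu_k G_k$. This lies in $\ker\psi$ again (each $G_k\in I\subseteq\ker\psi$), and by construction the $\mathrm O$-order-$a$ part of $P'$ vanishes: the $F_k$-part of $G_k$ kills $\mathrm{in}(P)$, while the extra terms $-\sum_{\underline\theta^k}c^k_{\underline\theta^k}X_1^{\theta^k_1}\cdots X_s^{\theta^k_s}$ all have strictly larger $\mathrm O$-order by the inequality $D^k_{\underline\theta^k}>p_k$. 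Therefore $\mathrm O(P')>\mathrm O(P)=a$.

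Iterating this construction produces, from $P$, a sequence in $I$ converging $\mathrm O$-adically to $P$: writing $P_0=P$ and $P_{n+1}=P_n-\sum_k\mu_k^{(n)}G_k$ with $\mathrm O(P_{n+1})>\mathrm O(P_n)$, the partial sums $\sum_{j\le n}\sum_k\mu_k^{(j)}G_k$ form a Cauchy sequence in the $(X_1,\ldots,X_s)$-adic topology (note $\mathrm O$-order bounded below forces the usual order $\mathrm{o}$ of the arguments to grow, since each $f_i$ has positive order), hence converge in $\KK\llbracket X_1,\ldots,X_s\rrbracket$, and their limit equals $P$. Since $I$ is closed in the adic topology (it is finitely generated over a Noetherian complete local ring — or one argues directly that a convergent sum of multiples of the $G_k$ again lies in $I$ using completeness of the coefficient ring), we conclude $P\in I$, as desired.

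The main obstacle is the passage from $\psi(P)=0$ to $\phi(\mathrm{in}(P))=0$: one must be sure that no unexpected cancellation among higher-order terms can "feed back" into $\mathrm O$-order $a$. This is exactly where the basis hypothesis is used — Proposition~\ref{o} (iii) together with $\mathrm o(R)=\langle \mathrm o(f_1),\ldots,\mathrm o(f_s)\rangle$ guarantees that a monomial $f_1^{\theta_1}\cdots f_s^{\theta_s}$ has order exactly $\mathrm O(\theta_1,\ldots,\theta_s)$, so the $\mathrm O$-grading on $\KK[X_1,\ldots,X_s]$ is compatible with the $\mathrm o$-filtration on $R$, and the lowest-order part of $\psi(P)$ is precisely $\phi(\mathrm{in}(P))$ read inside $\mathrm M_\mathrm o(R)=\KK[\mathrm M_\mathrm o(f_1),\ldots,\mathrm M_\mathrm o(f_s)]$. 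The only other point requiring a little care is the convergence/closedness argument, but this is standard for ideals in $\KK\llbracket X_1,\ldots,X_s\rrbracket$.
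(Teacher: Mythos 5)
Your argument is correct and follows essentially the same route as the paper's proof: show $I\subseteq\ker\psi$ directly, then for the reverse inclusion repeatedly subtract $\mathrm O$-homogeneous combinations $\sum_k\mu_k G_k$ whose initial parts (the $F_k$) kill the initial form of the given series, using that the initial form lies in $\ker\phi$, and pass to the limit. The only cosmetic difference is that you spell out the adic-convergence/closedness step a little more explicitly, and you slightly misattribute where the basis hypothesis enters (the identity $\phi(\mathrm{in}(P))=0$ needs only additivity of the order; the basis hypothesis is what guarantees each $S_k$ admits an expansion whose terms all have $\mathrm O$-order strictly greater than $p_k$, so that subtracting $\sum_k\mu_kG_k$ really raises the order).
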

\begin{proof} 
Let, as in Section \ref{formal-space}, $F_1,\ldots, F_r$ be a generating system of the kernel of the morphism \[\phi: \KK\llbracket X_1,\ldots,X_s\rrbracket \longrightarrow \KK\llbracket x\rrbracket,\ \phi(X_i)=\mathrm M_\mathrm o(f_i)\] for all $i\in \lbrace 1,\ldots,s\rbrace$. In particular $F_i$ is $\mathrm O$-homogeneous of order $\mathrm o(f_i)$ for all $i\in\lbrace 1,\ldots, s\rbrace$, and ${\KK\llbracket X_1,\ldots,X_s\rrbracket}/{(F_1,\ldots,F_r)}\simeq \KK\llbracket \mathrm M_\mathrm o(f_1),\ldots,\mathrm  M_\mathrm o(f_s)\rrbracket$. 

For all $i\in\lbrace 1,\ldots, r\rbrace, \psi(G_i)=0$. Hence $I\subseteq \ker(\psi)$. 

For the other inclusion, let $G=\sum_{\theta}c_{\theta}X_1^{\theta_1}\cdots X_s^{\theta_s}\in \ker (\psi)$. Write $G=\sum_{k\geq 0}c_{\theta^k}X_1^{\theta^k_1}\cdots X_s^{\theta^k_s}$ where $\mathrm O(\theta^0_1,\ldots,\theta^0_s)\leq\mathrm  O(\theta^{1}_1,\ldots,\theta^1_s)\leq\cdots$. Since $\psi(G)=0$, we have that 
\[
\sum_{k\geq 0}c_{\theta^k}f_1^{\theta^k_1}\cdots f_s^{\theta^k_s}=0.
\]
In particular $\sum_{k, O(\theta^k)=O(\theta^0)}c_{\theta^k}\mathrm M_o(f_1)^{\theta^k_1}\cdots \mathrm  M_o(f_s)^{\theta^k_s}=0$, and consequently $\sum_{k, O(\theta^k)=O(\theta^0)}c_{\theta^k}X_1^{\theta^k_1}\cdots X_s^{\theta^k_s}\in \ker(\phi)$. This implies that 
\[
\sum_{k, O(\theta^k)=O(\theta^0)}c_{\theta^k}X_1^{\theta^k_1}\cdots X_s^{\theta^k_s}=\sum_{i=1}^r\lambda^0_iF_i
\]
for some $\lambda^0_i\in \KK\llbracket X_1,\ldots,X_s\rrbracket$, $i\in\lbrace 1,\ldots,r\rbrace$, with $\lambda^0_i$ is $\mathrm O$-homogeneous of order $\mathrm O(G)-\mathrm O(F_i)$. Hence
\[
\sum_{k, O(\theta^k)=O(\theta^0)}c_{\theta^k}f_1^{\theta^k_1}\cdots f_s^{\theta^k_s}=\sum_{i=1}^r\lambda^0_i(f_1,\ldots,f_s)S_i.
\]
Let $G^1=G-\sum_{i=1}^r\lambda^0_iG_i$. We have $G^1\in \ker(\psi)$. If $G^1\not=0$, then $\mathrm O(G)<\mathrm O(G^1)$. Then we restart with $G^1$. We construct  in the same way 
$G^2$, and $\lambda^1_1,\ldots,\lambda^1_r$ such that $G^1=G^2+\sum_{j=1}^r\lambda^1_jG_j$ with $\mathrm O(G)<\mathrm O(G^1)<\mathrm  O(G^2)$, $\lambda^1_i$ $\mathrm O$-homogeneous and  $\mathrm O(\lambda^0_i)<\mathrm O(\lambda^1_i)$ for all $i\in\lbrace 1,\ldots,r\rbrace$. If we continue in this way, we get that for all $k\geq 0$, 
\[
G=G^{k+1}+\sum_{i=1}^r(\lambda^0_i+\lambda^1_i+\ldots+\lambda^k_i)G_i,
\]
with $\mathrm O(G)<\mathrm O(G^1)<\ldots<\mathrm O(G^{k+1})$, $\lambda_i^j$  $O$-homogeneous, and $\mathrm O(\lambda^0_i)<\mathrm O(\lambda^1_i)<\cdots<\mathrm O(\lambda^k_i)$ for all $i\in\{ 1,\ldots,r\}$ and for all $j\in\lbrace 1,\ldots,k\rbrace$. If $G^k=0$ for some $k$, then we are done. Otherwise, let $\lambda_i=\sum_{k=0}^{\infty}\lambda^k_i$, and let $\bar{G}=\lim_{k\to +\infty}G^k$. We have $\bar{G}=0$ and $G=\sum_{i=1}^r\lambda_iG_i$. This proves our assertion.
\end{proof}

 Let the notations be as above. Let $G= \sum_{\theta}c_{\theta} X_1^{\theta_1}\cdots X_s^{\theta_s} \in \KK\llbracket X_1,\ldots,X_s\rrbracket$ and write  $G=\sum_{i\geq 0}{G_{p_i}}$ with $p_0<p_1<\cdots$ and $G_{p_i}$  $\mathrm O$-homogeneous. We set $H_G=\sum_{i\geq 0}u^{p_i-p_0}G_{p_i}$, in such a way that $H_G$ is $O_h$-homogeneous of order $p_0$. Given an ideal $S$ of $\KK\llbracket X_1,\ldots,X_s\rrbracket$, we set $\mathrm{in}(S)=(\mathrm{in}(G) \mid G\in S\setminus\{0\})$. We also denote by $H_S=(H_G \mid G\in S\setminus\{0\})\KK\llbracket u,X_1,\ldots,X_s\rrbracket$. With these notations we have $\mathrm{in}(S_i)=F_i$ and $H_{G_i}=H_i$ for all $i\in\lbrace 1,\ldots,r\rbrace$.

\begin{lem}\label{standardbasis} Let the notations be as above. We have $\mathrm{in}(I)=(F_1,\ldots,F_r)$, and $H_I=(H_1,\ldots,H_r)=J$.
\end{lem}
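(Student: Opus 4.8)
The plan is to prove the two statements in parallel, since the computation underlying $\mathrm{in}(I)=(F_1,\ldots,F_r)$ and the one underlying $H_I=(H_1,\ldots,H_r)$ are the homogenized shadow of the same division process already used in Lemma \ref{localkernel}. First I would establish the easy inclusions. For the initial ideal: each generator $G_i$ of $I$ satisfies $\mathrm{in}(G_i)=F_i$ (this is exactly the observation recorded just before the statement, coming from $\mathrm o(f_1^{\theta^i_1}\cdots f_s^{\theta^i_s})=D^i_{\underline\theta^i}>p_i$), so $(F_1,\ldots,F_r)\subseteq \mathrm{in}(I)$. For the homogenized ideal: $H_{G_i}=H_i$ by the explicit form of $H_i$ (the exponent of $u$ on the term $c^i_{\underline\theta^i}X_1^{\theta^i_1}\cdots X_s^{\theta^i_s}$ is precisely $D^i_{\underline\theta^i}-p_i$), hence $(H_1,\ldots,H_r)\subseteq H_I$; and conversely for any $G\in I\setminus\{0\}$ one checks $H_G\in(H_1,\ldots,H_r)$, giving $H_I=(H_1,\ldots,H_r)$ once the reverse inclusion for initial forms is in hand. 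So the real content is the inclusion $\mathrm{in}(I)\subseteq(F_1,\ldots,F_r)$.

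For that, I would take $G\in I\setminus\{0\}$ and show $\mathrm{in}(G)\in(F_1,\ldots,F_r)$. Since $I\subseteq\ker(\psi)$ by Lemma \ref{localkernel}, we have $\psi(G)=0$; grouping the terms of $G$ by $\mathrm O$-degree as $G=\sum_{k\ge 0}G_{p_k}$ with $p_0<p_1<\cdots$, the vanishing of the lowest-order part of $\psi(G)$ gives $\psi_0(G_{p_0})=0$ where $\psi_0$ is the map $X_i\mapsto \mathrm M_\mathrm o(f_i)$, i.e. $G_{p_0}=\mathrm{in}(G)\in\ker(\phi)=(F_1,\ldots,F_r)$. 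This is essentially the same first step as in the proof of Lemma \ref{localkernel}, now read off directly: the leading form of any element of $\ker(\psi)$ lies in $\ker(\phi)$. Hence $\mathrm{in}(I)\subseteq\ker(\phi)=(F_1,\ldots,F_r)$; combined with the reverse inclusion already noted, $\mathrm{in}(I)=(F_1,\ldots,F_r)$.

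It remains to close the loop for $H_I$. Given $G\in I\setminus\{0\}$, I would run the division of Lemma \ref{localkernel}: write $G=\sum_{i=1}^r\lambda_i G_i$ with each $\lambda_i$ either zero or $\mathrm O$-homogeneous, and with the degree bookkeeping $\mathrm O(\lambda_i G_i)\ge\mathrm O(G)$ and the cancellation only happening among the summands of minimal $\mathrm O$-degree. Homogenizing, one gets $H_G=\sum_{i=1}^r u^{e_i}\lambda_i H_{G_i}=\sum_{i=1}^r u^{e_i}\lambda_i H_i$ for suitable nonnegative integers $e_i=\mathrm O(\lambda_i G_i)-\mathrm O(G)$ (using that $H$ is multiplicative on $\mathrm O$-homogeneous factors and additive on sums of strictly increasing $\mathrm O$-degree, so $H_{\lambda_i G_i}=\lambda_i H_{G_i}$ and $H_G=\sum_i u^{e_i}H_{\lambda_i G_i}$), whence $H_G\in(H_1,\ldots,H_r)$. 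Since the $H_{G_i}=H_i$ generate, this yields $H_I\subseteq(H_1,\ldots,H_r)$, and the reverse inclusion is the easy one above.

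The main obstacle I anticipate is the careful homogenization bookkeeping in the last step: one must verify that the homogenization operator $G\mapsto H_G$ interacts correctly with the (possibly infinite) sums appearing in the proof of Lemma \ref{localkernel} — namely that $H_{\lambda_i G_i}=\lambda_i H_{G_i}$ when $\lambda_i$ is $\mathrm O$-homogeneous, and that $H$ distributes over the convergent sum $G=\sum_i\lambda_i G_i$ up to the powers of $u$ recording the $\mathrm O$-degree gaps, so that the identity genuinely holds in $\KK\llbracket u,X_1,\ldots,X_s\rrbracket$. Everything else is a direct translation of facts already proved.
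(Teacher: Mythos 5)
Your overall strategy matches the paper's: the first assertion is read off from the division performed in Lemma \ref{localkernel} (your direct observation that the lowest $\mathrm O$-order part of $\psi(G)=0$ forces $\mathrm{in}(G)\in\ker(\phi)=(F_1,\ldots,F_r)$ is exactly what that proof yields), and the second assertion is obtained by homogenizing that division. The one place where your write-up is inaccurate is the claim that $G=\sum_{i=1}^r\lambda_iG_i$ with each $\lambda_i$ ``zero or $\mathrm O$-homogeneous'': the multipliers produced by Lemma \ref{localkernel} are convergent sums $\lambda_i=\sum_{k\ge 0}\lambda_i^k$ of $\mathrm O$-homogeneous pieces of strictly increasing order, so $H_{\lambda_iG_i}\neq\lambda_iH_{G_i}$ in general; the correct identity is $H_{\lambda_iG_i}=H_{\lambda_i}H_{G_i}=H_{\lambda_i}H_i$ (up to a power of $u$), where $H_{\lambda_i}=\sum_k u^{\mathrm O(\lambda_i^k)-\mathrm O(\lambda_i^0)}\lambda_i^k$ is the homogenized multiplier, and one must also check that $\mathrm O(\lambda_iG_i)\ge\mathrm O(G)$ so that all exponents of $u$ are nonnegative (this holds because $\mathrm O(\lambda_i^k)\ge\mathrm O(G)-\mathrm O(F_i)$ for every $k$). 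This is precisely the bookkeeping you flag as the main obstacle, so it is a fixable slip rather than a gap; the paper avoids it by running the division one $\mathrm O_h$-homogeneous layer at a time (subtract $\sum_i\lambda_i^0H_i$, observe the order strictly increases, and iterate), which amounts to your argument unrolled. A small bonus of your packaging: by proving $H_G\in(H_1,\ldots,H_r)$ for every generator $H_G$ of $H_I$, you bypass the paper's reduction of an arbitrary $\mathrm O_h$-homogeneous $H\in H_I$ to the form $u^eH_G$.
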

\begin{proof} The first assertion follows from the proof of Lemma \ref{localkernel}. To prove the second assertion, let $H\in H_I$ and assume that $H$ is $\mathrm O_h$-homogeneous. We have $G=H(1,X_1,\ldots,H_s)\in I$. Furthermore, $H=u^eH_G$ for some $e\geq 0$. Write $H_G=\mathrm{in}(G)+H^1$ where $H^1(0,X_1,\ldots, X_s)=0$. We have $\mathrm{in}(G)=\sum_{i=1}^r\lambda_iF_i$ where $\lambda_i$ is $\mathrm O$-homogeneous of order $p_0-\mathrm O(F_i)$. Let $H_{G^1}=H_G-\sum_{i=1}^r\lambda_i H_{G_i}=H_G-\sum_{i=1}^r\lambda_i H_i$. Then $H_{G^1}\in H_I$ is $\mathrm O_h$-homogeneous and $\mathrm O_h(H_G)<\mathrm O_h(H_{G^1})$. Now we restart with $H_{G^1}$. We prove in this way that $H\in (H_1,\ldots,H_r)$.   
\end{proof} 

Let $H= \sum_{\theta}c_{\theta} u^{\theta}X_1^{\theta_1}\cdots X_s^{\theta_s}\in \ker(H_\psi)$. Write $H= \sum_{k}{H^k}$, where $H^k$ is $\mathrm O_h$-homogeneous. For all $k$, we have $H_\psi(H^k)=0$. Setting $G_k=H^k(1,X_1,\ldots,X_s)$, we have $\psi(G_k)=0$.  This implies that $G_k\in I$. Hence $H_{G_k}\in (H_1,\ldots,H_r)$ by Lemma \ref{standardbasis}. But $H^k=u^{e_k}H_{G_k}$ for some $e_k\in {\mathbb N}$. Consequently $H^k\in (H_1,\ldots,H_r)$. Finally $H\in (H_1,\ldots,H_r)$, which proves that $\ker(H_{\psi})\subseteq J$. As the inclusion $J\subseteq \ker(H_{\psi})$ is obvious, we conclude that $J=\ker(H_{\psi})$.






 Now the morphism
$$
\KK\llbracket u\rrbracket \longrightarrow {\KK\llbracket u\rrbracket \llbracket X_1,\ldots,X_s\rrbracket }/{J}
$$
is flat because $u$ is not a zero divisor. Hence we get a family of formal space curves  parametrized by $u$ that gives us a deformation from 
${\KK\llbracket X_1,\ldots,X_r\rrbracket }/{I}$ to ${\KK\llbracket X_1,\ldots,X_r\rrbracket }/{(F_1,\ldots,F_r)}$.

In particular we get the following.

\begin{thm} \label{deformation-monomial-local}
Every formal space curve of $\KK^l$, parametrized by $Y_1=g_1(x), \ldots,Y_l=g_l(x)$ has a deformation into a formal monomial curve of $\KK^{r}$ for some positive integer $r$.
\end{thm}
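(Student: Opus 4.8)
The plan is to reduce Theorem~\ref{deformation-monomial-local} to the machinery already developed for a subalgebra $R\subseteq\KK\llbracket x\rrbracket$. Given the parametrization $Y_1=g_1(x),\ldots,Y_l=g_l(x)$, consider the subalgebra $R_0=\KK\llbracket g_1,\ldots,g_l\rrbracket\subseteq\KK\llbracket x\rrbracket$. The first subtlety is that the hypotheses of Section~\ref{formal-space} require $\bar R_0=\KK\llbracket x\rrbracket$ and $\lambda_{R_0}(\bar R_0/R_0)<\infty$; in general $\bar R_0=\KK\llbracket t\rrbracket$ for a suitable uniformizing parameter $t$ of the normalization, and one can harmlessly rename $t$ as $x$, so that $d:=\gcd\{\mathrm o(g_i)\}$ may be assumed to be $1$ (if $d>1$ the curve already factors through the monomial map $x\mapsto x^d$ composed with a curve in the parameter $x^d$, and one runs the argument there). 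Granting this normalization, $R_0$ is a ring of our type.

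Next I would run the basis algorithm (Algorithm after Theorem~\ref{carac-local}) starting from $\{g_1,\ldots,g_l\}$: it terminates and produces a basis $\{f_1,\ldots,f_s\}$ of $R$, where $R=R_0$, with $s\ge l$ and each $f_i$ monic (after rescaling). Now apply the homogenization construction of this section: with $\psi\colon\KK\llbracket X_1,\ldots,X_s\rrbracket\to R$, $\psi(X_i)=f_i$, we have $\ker\psi=I$ by Lemma~\ref{localkernel}, and with $H_\psi\colon\KK\llbracket u\rrbracket\llbracket X_1,\ldots,X_s\rrbracket\to T$ we have $\ker H_\psi=J$ and $H_I=J=(H_1,\ldots,H_r)$ by Lemma~\ref{standardbasis}. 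The morphism $\KK\llbracket u\rrbracket\to\KK\llbracket u\rrbracket\llbracket X_1,\ldots,X_s\rrbracket/J$ is flat since $u$ is a non–zero–divisor in that quotient (the generators $H_i$ have $u$-free initial form $F_i$, so $u$ does not divide any nonzero element of $J$), giving a flat family whose fibre at $u=1$ is $\KK\llbracket X_1,\ldots,X_s\rrbracket/I\cong R$ and whose fibre at $u=0$ is $\KK\llbracket X_1,\ldots,X_s\rrbracket/(F_1,\ldots,F_r)\cong\KK\llbracket\mathrm M_\mathrm o(f_1),\ldots,\mathrm M_\mathrm o(f_s)\rrbracket$, the coordinate ring of the monomial curve $x\mapsto(x^{\mathrm o(f_1)},\ldots,x^{\mathrm o(f_s)})$ in $\KK^{s}$. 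Setting $r=s$ in the statement, this is exactly the asserted deformation.

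Finally I would record that the original curve $Y=g(x)$ and the curve $X=f(x)=(f_1(x),\ldots,f_s(x))$ have isomorphic coordinate rings — indeed $\KK\llbracket g_1,\ldots,g_l\rrbracket=\KK\llbracket f_1,\ldots,f_s\rrbracket=R$ — so passing from the former to the latter does not change the analytic type of the curve, only its embedding; hence exhibiting a deformation of the latter suffices. Putting the pieces together yields the theorem.

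The main obstacle I anticipate is not the homogenization bookkeeping (that is already done in Lemmas~\ref{localkernel} and~\ref{standardbasis}) but the passage to the normalization at the outset: one must be careful that $\KK\llbracket g_1,\ldots,g_l\rrbracket$ genuinely satisfies the running hypotheses of the section, i.e.\ that after choosing the right uniformizer its integral closure is $\KK\llbracket x\rrbracket$ and the colength is finite, which forces $\gcd$ of the orders to be $1$. If one does not want to assume the $g_i$ already parametrize a curve whose normalization is $\KK\llbracket x\rrbracket$, the cleanest route is to state this as a standing hypothesis (as the paper does for $R$ throughout Section~\ref{formal-space}), in which case the obstacle disappears and the proof is the concatenation of the algorithm's termination with the flatness of the family constructed above.
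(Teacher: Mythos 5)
Your proposal is correct and follows essentially the same route as the paper: the theorem is obtained there as an immediate consequence of the preceding construction, namely computing a basis $\{f_1,\ldots,f_s\}$ of $R=\KK\llbracket g_1,\ldots,g_l\rrbracket$, invoking Lemmas~\ref{localkernel} and~\ref{standardbasis} to identify $\ker\psi=I$ and $\ker H_\psi=J=(H_1,\ldots,H_r)$, and using that $u$ is a non--zero--divisor to get the flat family with special fibre the toric ring $\KK\llbracket X_1,\ldots,X_s\rrbracket/(F_1,\ldots,F_r)$. Your additional remarks on the normalization hypothesis and on the re-embedding into $\KK^{s}$ are consistent with the paper's standing assumptions and do not change the argument.
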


\section{Basis of $\mathbb K\llbracket f(x),g(x)\rrbracket $}\label{two-local}

In this section we study the particular case of a subalgebra $R$ of $\KK[\![x]\!]$ generated by two elements, and see that a different approach can be considered to study $\mathrm o(R)$, with some interesting applications.
 
Let $f(x)=\sum_{i\geq n}a_ix^i$ and $g(x)=\sum_{j\geq m}b_jx^j$ be two elements of $\KK\llbracket x\rrbracket $ and suppose, without loss of generality, that the following conditions hold:
\begin{enumerate} 
  \item $a_n=b_m=1$,
 \item $n\leq m$,
 \item  the greatest common divisor of $\mathrm{supp}(f(x))\cup\mathrm{supp}(g(x))$ is equal to $1$ (in particular for all $d>1$, $f(x),g(x)\notin\KK\llbracket x^d\rrbracket$). 
 \end{enumerate}
Let the notations be as in Section \ref{formal-space}, in particular ${R}=\KK\llbracket f,g\rrbracket $. By the analytic change of variables $f(x)=\tilde{x}^n$, we may assume that ${R}=\KK\llbracket x^n,g(x)\rrbracket $. Let  $F(X,Y)$ be the $x$-resultant of $X-x^n,Y-g(x)$, that is, $F(X,Y)$ is the generator of the kernel of the map $\rho:\KK\llbracket X,Y\rrbracket \longrightarrow \KK\llbracket x\rrbracket $, $\rho(X)=x^n$ and $\rho(Y)=g(x)$. Since $\KK\llbracket f,g\rrbracket =\KK\llbracket f,g-f^k\rrbracket $ for all $k\geq 1$, then we shall assume that $n< m$ and also that $n$ does not divide $m$. Given a nonzero element $G(X,Y)\notin (F(X,Y))\KK\llbracket X,Y\rrbracket $, we set $\mathrm{int}(F,G)={\mathrm o}(G(f(x),g(x)))$.
 Condition (3) implies that the set of $\mathrm{int}(F,G), G(X,Y)\notin (F(X,Y))\KK\llbracket X,Y\rrbracket $, is a numerical semigroup. We denote it by $\Gamma(F)$. We have the following.
 
 \begin{prop} $\mathrm o({R})=\Gamma(F)$.
 \end{prop}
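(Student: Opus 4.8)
The plan is to show the two numerical semigroups $\mathrm o(R)$ and $\Gamma(F)$ coincide by proving each is contained in the other; both containments amount to translating between elements of $R$ and polynomials $G(X,Y)$ not divisible by $F$. First I would observe that $R=\KK\llbracket x^n,g(x)\rrbracket$ is precisely the image of the ring map $\rho\colon \KK\llbracket X,Y\rrbracket\to\KK\llbracket x\rrbracket$ with $\rho(X)=x^n$, $\rho(Y)=g(x)$, whose kernel is $(F(X,Y))$. Indeed, $R=\rho(\KK\llbracket X,Y\rrbracket)$, so $R^*=\{\,\rho(G)\mid G\in\KK\llbracket X,Y\rrbracket,\ \rho(G)\neq 0\,\}$, and $\rho(G)\neq 0$ is equivalent to $G\notin\ker\rho=(F)$. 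Hence
\[
\mathrm o(R)=\{\mathrm o(\rho(G))\mid G\notin (F)\}=\{\mathrm{int}(F,G)\mid G\notin (F)\}=\Gamma(F),
\]
by the very definition of $\mathrm{int}(F,G)$ and of $\Gamma(F)$.

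For the $\subseteq$ direction spelled out: given $h\in R^*$, write $h=\rho(G)$ for some $G\in\KK\llbracket X,Y\rrbracket$; since $h\neq 0$ we have $G\notin(F)$, and $\mathrm o(h)=\mathrm o(\rho(G))=\mathrm{int}(F,G)\in\Gamma(F)$. For $\supseteq$: any generator of $\Gamma(F)$ is of the form $\mathrm{int}(F,G)=\mathrm o(G(f,g))$ for some $G\notin(F)$, and $G(f,g)=\rho(G)\in R$ is nonzero (again because $G\notin\ker\rho$), so $\mathrm{int}(F,G)=\mathrm o(\rho(G))\in\mathrm o(R)$; passing to the monoid generated preserves the inclusion. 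Note that condition (3) is exactly what guarantees $\mathrm o(R)$ (equivalently $\Gamma(F)$) has $\gcd 1$, hence is a numerical semigroup, and the reduction $\KK\llbracket f,g\rrbracket=\KK\llbracket f,g-f^k\rrbracket$ together with the substitution $f=\tilde x^n$ legitimizes working with $\KK\llbracket x^n,g(x)\rrbracket$ in the first place.

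The only point that requires a word of care — the \emph{main obstacle}, such as it is — is the equality $R=\rho(\KK\llbracket X,Y\rrbracket)$ and in particular that $\rho$ genuinely lands in $\KK\llbracket x\rrbracket$ and is a well-defined ring homomorphism out of the power series ring: substituting $g(x)$ for $Y$ into an arbitrary power series in $X,Y$ converges $x$-adically because $\mathrm o(x^n)\ge 1$ and $\mathrm o(g(x))=m\ge 1$, so infinite $\KK$-linear combinations of monomials in $x^n$ and $g(x)$ are summable in $\KK\llbracket x\rrbracket$; this is the same completeness argument already used in the proof of Proposition~\ref{remainder}. Once $\rho$ is seen to be a surjection onto $R$ with kernel $(F)$, the proposition is immediate from unwinding definitions, so I would keep the write-up to the two short containment arguments above.
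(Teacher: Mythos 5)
Your proof is correct and takes essentially the same route as the paper: the paper's own argument is exactly the observation that $a\in\Gamma(F)$ iff $a=\mathrm o(G(f(x),g(x)))$ for some $G\notin(F)$ iff $a\in\mathrm o(R)$, i.e.\ the identification of $R$ with the image of $\rho$ and the unwinding of definitions that you spell out. Your extra remarks on the $x$-adic well-definedness of $\rho$ and on surjectivity onto $R$ are sound but not needed beyond what the paper already assumes.
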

 
 \begin{proof} We have $a\in \Gamma(F)$ if and only if $a=\mathrm o(G(f(x),g(x)))$ for some $G(X,Y)\in\KK\llbracket X,Y\rrbracket $ if and only if $a\in\mathrm o({R})$. 
 \end{proof}
 
Suppose that $\KK$ is algebraically closed with characteristic zero, and let $d_1=n$, $m_1=\mathrm{inf}\lbrace i\in\mathrm{supp}(g)\mid d_1\nmid i\rbrace$, that is, $m_1=m$, and $d_2=\gcd (n,m_1)$. For all $k\geq 2$ we set $m_k=\mathrm{inf}\lbrace i\in\mathrm{supp}(g) \mid  d_k\nmid i\rbrace$ and $d_{k+1}=\gcd (d_k,m_k)$. It follows that there exists $h\geq 1$ such that $d_{h+1}=1$. The set $\lbrace m_1,\ldots,m_h\rbrace$ is called the \emph{set of Newton-Puiseux exponents} of $F(X,Y)$. Let $e_k=\frac{d_k}{d_{k+1}}$ for all $1\leq k\leq h$ and define the sequence $(r_k)_{0\leq k\leq h}$ as follows: $r_0=n,r_1=m$, and for all $2\leq k\leq h, r_k=r_{k-1}e_{k-1}+m_k-m_{k-1}$. With these notations we have the following (see [1]).
 
 \begin{enumerate}
 
 \item $\Gamma(F)=\mathrm o({R})$ is generated by $\{r_0,r_1,\ldots,r_h\}$.
 
 \item $r_kd_k< r_{k+1}d_{k+1}$ for all $k\in\{1,\ldots,h-1\}$.
 
 \item  $\Gamma(F)=\mathrm o({R})$ is free with respect to the arrangement $(r_0,\ldots,r_h)$. More precisely, let $e_k=\frac{d_k}{d_{k+1}}$ for all $k\in\{1,\ldots,h\}$. Then $e_kr_k\in \langle r_0,\ldots,r_{k-1} \rangle$.
 
 \item $C=\sum_{k=1}^h(e_k-1)r_k -n+1$ is the conductor of $\Gamma(F)=\mathrm o({R})$. 
 \end{enumerate} 
 
 \begin{ex}
 Let $f=x^7$ and $g=x^4+x^2$. The above resultant is then $F=y^7-7x^2y^3-x^4-14x^2y^2-7x^2y-x^2$. Then $\Gamma(F)=\mathrm o(R)=\langle 2,7\rangle$.
\begin{verbatim}
gap> Resultant(x-t^7, y-t^4-t^2,t);
y^7-7*x^2*y^3-x^4-14*x^2*y^2-7*x^2*y-x^2
gap> s:=SemigroupOfValuesOfCurve_Local([t^7,t^4+t^2]);
<Modular numerical semigroup satisfying 7x mod 14 <= x >
gap> MinimalGeneratingSystem(last);
[ 2, 7 ]
gap> IsFreeNumericalSemigroup(s);
true
\end{verbatim}
\end{ex}
 Let the notations be as above. For all $k\geq 2$, let $G_k(X,Y)\in \KK\llbracket X,Y\rrbracket $ such that $\mathrm o(G_k(x^n,g(x)))=r_k$. It follows from \cite{ab} that $\mathrm{deg}_YG_k=\frac{n}{d_k}$. If $g_k(x)= G_k(x^n,g(x))$, then we have the following.
 
 \begin{prop} The set $\lbrace x^n,g,g_2,\ldots,g_h\rbrace$ is a basis of ${R}$, that is, ${R}=\KK\llbracket x^n,g,g_2,\ldots,g_h\rrbracket $ and $\mathrm M_\mathrm o({R})=\KK\llbracket x^n,x^m,x^{r_2},\ldots,x^{r_h}\rrbracket $.
 \end{prop}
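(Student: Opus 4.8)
The plan is to show that $\{x^n, g, g_2, \ldots, g_h\}$ is a basis in the sense defined in Section~\ref{formal-space}, i.e.\ that $\mathrm o(R) = \langle \mathrm o(x^n), \mathrm o(g), \mathrm o(g_2),\ldots,\mathrm o(g_h)\rangle$. Since $\mathrm o(x^n) = n = r_0$, $\mathrm o(g) = m = r_1$, and $\mathrm o(g_k) = \mathrm o(G_k(x^n,g(x))) = r_k$ by the very choice of $G_k$, the right-hand side is $\langle r_0, r_1, r_2, \ldots, r_h\rangle$. But by item~(1) in the list of properties quoted from \cite{1}, $\mathrm o(R) = \Gamma(F) = \langle r_0, r_1, \ldots, r_h\rangle$. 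Hence $\mathrm o(R) = \langle \mathrm o(x^n), \mathrm o(g), \mathrm o(g_2), \ldots, \mathrm o(g_h)\rangle$, which is precisely the definition of $\{x^n,g,g_2,\ldots,g_h\}$ being a basis of $R$; equivalently $R = \KK\llbracket x^n, g, g_2, \ldots, g_h\rrbracket$.

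The second assertion, $\mathrm M_\mathrm o(R) = \KK\llbracket x^n, x^m, x^{r_2}, \ldots, x^{r_h}\rrbracket$, follows from the general equivalence stated just before Proposition~\ref{remainder}: $\{f_1,\ldots,f_s\}$ is a basis of $R$ if and only if $\mathrm M_\mathrm o(R) = \KK[\mathrm M_\mathrm o(f_1),\ldots,\mathrm M_\mathrm o(f_s)]$. Here each generator $x^n, g, g_2, \ldots, g_h$ is monic (for $x^n$ and $g$ this is the normalization $a_n = b_m = 1$; for $g_k$ one checks that its leading coefficient can be scaled to $1$, or absorb the scalar into $G_k$), so $\mathrm M_\mathrm o(x^n) = x^n$, $\mathrm M_\mathrm o(g) = x^m$, $\mathrm M_\mathrm o(g_k) = x^{r_k}$, giving the stated description (the passage from $\KK[\cdots]$ to $\KK\llbracket\cdots\rrbracket$ being harmless since we are inside a power series ring and these are the monomial generators).

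I do not expect a serious obstacle here: the proposition is essentially a repackaging of the facts (1)--(4) imported from the theory of plane branches in \cite{1}, combined with the elementary characterization of "basis" from Section~\ref{formal-space}. The one point that deserves a sentence of care is \emph{why such $G_k$ with $\mathrm o(G_k(x^n,g(x))) = r_k$ exist} and why $\mathrm{deg}_Y G_k = n/d_k$ --- but this is exactly what is cited from \cite{ab} immediately before the statement, so it may be invoked directly. A second minor point is the monicity of $g_k$: strictly, $G_k(x^n, g(x))$ has some nonzero leading coefficient $c$, and one replaces $G_k$ by $c^{-1}G_k$ (or $g_k$ by $c^{-1}g_k$) to arrange $\mathrm M_\mathrm o(g_k) = x^{r_k}$; this does not change $\deg_Y$ nor the generated subalgebra. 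With these remarks in place the proof is just the two displayed identifications above.
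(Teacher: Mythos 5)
Your proof is correct and is exactly the argument the paper intends: the proposition is stated there without proof, as an immediate consequence of fact (1) ($\mathrm o(R)=\Gamma(F)=\langle r_0,r_1,\ldots,r_h\rangle$) together with the choice of the $G_k$ (so that $\mathrm o(g_k)=r_k$) and the definition of basis from Section~2. Your two side remarks are the right points of care --- rescaling $g_k$ to be monic is harmless since $\KK[cx^{r_k}]=\KK[x^{r_k}]$, and the equality $R=\KK\llbracket x^n,g,g_2,\ldots,g_h\rrbracket$ holds simply because each $g_k=G_k(x^n,g(x))$ already lies in $R$.
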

 
 
  
  
  
  
 
 \noindent Note that, by a similar argument as in Section \ref{formal-space}, we may assume that  $f=x^n, g=x^m+\sum_{i\in \mathrm G(\Gamma(F))}c^1_ix^i$, and for all $k\geq 2, g_k=x^{r_k}+\sum_{i\in \mathrm G(\Gamma(F))}c^k_ix^i$, where $\mathrm G(\Gamma(F))=\lbrace j\in \mathbb N\mid  j\notin \Gamma(F)\rbrace$ is the set of gaps of $\Gamma(F)$.
 
 
 \noindent Let the notations be as in Section \ref{deformation-local}. The morphism
 $$
 D:\KK\llbracket u\rrbracket \longrightarrow T= \KK\llbracket u\rrbracket \llbracket H_f,H_g,H_{g_2},\ldots,H_{g_h}\rrbracket 
 $$
 
 \noindent gives us a deformation of $T\mid_{u=1}={R}=\KK\llbracket f(x),g(x),g_2(x),\ldots,g_h(x)\rrbracket $ to $T\mid_{u=0}=\KK\llbracket x^n,x^m,x^{r_2},\ldots,x^{r_h}\rrbracket $. Note that,
 since $\langle n,m,r_2,\ldots,r_h \rangle$ is free with respect to the given arrangement, then it is a complete intersection (see for instance \cite{ns}). For all $k\in\{1,\ldots,h\}$, write $e_kr_k=\sum_{i=0}^{k-1}\theta^k_ir_i$ with $0\leq \theta^k_i<e_i$ for all $i\in\{1,\ldots,k-1\}$. If $B$ is the ideal of $\KK[X_0,X_1,\ldots,X_h]$ generated by
 $$
 \lbrace X_1^{e_2 }-X_0^{m\over d_2}, X_2^{e_2}-X_0^{\theta^2_0}X_1^{\theta^2_1},\ldots, X_{h}^{e_h}-X_0^{\theta^{h}_0}X_1^{\theta^{h}_1}\ldots X_{h-1}^{\theta^{h}_{h-1}}\rbrace
 $$
 then 
 $$
 \KK\llbracket x^n,x^m,x^{r_2},\ldots,x^{r_h}\rrbracket \simeq {\KK\llbracket X_0,X_1,\ldots,X_h\rrbracket }/{B}.
 $$
 Let $\bar{F}(X,Y)$ be the $x$-resultant of $X-x^n, y-g(x)$. By hypothesis,  $\bar{F}(X,Y)$ is a polynomial. Furthermore, $\bar{F}(X,Y)=Y^n+a_1(X)Y^{n-1}+\ldots+a_n(X)$ with $\mathrm o(a_i(X))>i$ for all $2\leq i\leq n$. Set $G_{h+1}={\bar F}$ and for all $k\geq 1$, let
  $$
 G_{k+1}=G_k^{e_k}-X^{\theta^k_0}\prod_{i=1}^{k-1}G_i^{\theta^k_i}+
 \sum_{\underline{\alpha}^k}c^k_{\underline{\alpha}^k}X^{\alpha^k_0}G_1^{\alpha^k_1}\cdots G_{k}^{\alpha^k_{k}},
 $$
 where the following conditions hold:
 
 \begin{enumerate}
 
 \item for all $i\in\{1,\ldots,k-1\}$, $0\leq \theta^k_i<e_i$;
 
 \item for all $\underline{\alpha}^k$, if $c^k_{\underline{\alpha}^k}\not= 0$, then for all $i\in\{1,\ldots,k\}$, $0\leq \alpha^k_i<e_i$;
 
 \item for all $\underline{\alpha}^k$, if $c^k_{\underline{\alpha}^k}\not= 0$, then $\alpha^k_0n+\sum_{i=1}^{k}\alpha^k_ir_i=D^k_i>e_kr_k=\theta^k_0r_0+\sum_{i=1}^{k-1}\theta^k_ir_i$.
 \end{enumerate}
It follows from Section \ref{deformation-local}. that if $I$ (respectively $J$) is the ideal generated by \[(X_k^{e_k}-X_0^{\theta^k_0}\prod_{i=1}^{k-1}X_i^{\theta^k_i}+\sum_{\underline{\alpha}^k}c^k_{\underline{\alpha}^k}X_0^{\alpha^k_0}X_1^{\alpha^k_1}\cdots  X_{k}^{\alpha^k_{k}})_{1\leq k\leq h}\] (respectively $(X_k^{e_k}-X_0^{\theta^k_0}\prod_{i=1}^{k-1}X_i^{\theta^k_i}+\sum_{\underline{\alpha}^k}c^k_{\underline{\alpha}^k}u^{D^k_i-e_kr_k}X_0^{\alpha^k_0}X_1^{\alpha^k_1}\cdots  X_{k}^{\alpha^k_{k}})_{1\leq k\leq h}$) in $\KK\llbracket X_0,\ldots,X_h\rrbracket $ (respectively $\KK\llbracket u\rrbracket \llbracket X_0,\ldots,X_h\rrbracket $), then 
  $$
    {R}=\KK\llbracket x^n,g(x),g_2(x),\ldots,g_h(x)\rrbracket \simeq {\KK\llbracket X_0,X_1,\ldots,X_h\rrbracket }/{I}
    $$
  and 
 $$
     \KK\llbracket u\rrbracket \llbracket x^n,H_g,H_{g_2},\ldots,H_{g_h}\rrbracket\simeq {\KK\llbracket u\rrbracket \llbracket X_0,X_1,\ldots,X_h\rrbracket }/{J}.
     $$
	Furthermore, ${\KK\llbracket u\rrbracket \llbracket X_0,X_1,\ldots,X_h\rrbracket }/{J}$ is a flat $\KK\llbracket u\rrbracket$-module. This gives us a family of formal space curves parametrized by $u$ which is a deformation from ${\KK\llbracket X_0,X_1,\ldots,X_h\rrbracket }/{I}$ to 
 the formal toric variety ${\KK\llbracket X_0,X_1,\ldots,X_h\rrbracket }/{B}$. The later being a complete intersection, we get the following.

  

 
 \begin{thm} Every irreducible singularity of a plane curve $X=f(x), Y=g(x)$ of $\KK^2$ has a deformation into a formal monomial complete intersection curve of $\KK^{h+1}$ for some $h\geq 1$.
 \end{thm}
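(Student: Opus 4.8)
The plan is to assemble the theorem directly from the construction carried out in the paragraphs immediately preceding it, specialized to the two-generator plane curve case. First I would recall the setup: starting from an irreducible plane curve singularity $X=f(x),Y=g(x)$, after the analytic change of variables making $f=x^n$ we have $R=\KK\llbracket x^n,g(x)\rrbracket$, and by the Newton--Puiseux analysis (valid since $\KK$ is algebraically closed of characteristic zero) the semigroup $\Gamma(F)=\mathrm o(R)$ is free with respect to the arrangement $(r_0,r_1,\ldots,r_h)$, with $\{x^n,g,g_2,\ldots,g_h\}$ a basis of $R$. I would then invoke Proposition~\ref{homogeneous-local} and the homogenization machinery of Section~\ref{deformation-local}: setting $H_f,H_g,H_{g_2},\ldots,H_{g_h}$ and $T=\KK\llbracket u\rrbracket\llbracket H_f,H_g,\ldots,H_{g_h}\rrbracket$, the morphism $D\colon\KK\llbracket u\rrbracket\to T$ realizes a family whose fiber at $u=1$ is $R$ and whose fiber at $u=0$ is $\KK\llbracket x^n,x^m,x^{r_2},\ldots,x^{r_h}\rrbracket\simeq \KK\llbracket X_0,\ldots,X_h\rrbracket/B$.

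Next I would make the flatness and the identifications precise. Using the presentations $R\simeq \KK\llbracket X_0,\ldots,X_h\rrbracket/I$ and $T\simeq \KK\llbracket u\rrbracket\llbracket X_0,\ldots,X_h\rrbracket/J$ established in the preceding discussion (which itself rests on Lemmas~\ref{localkernel} and \ref{standardbasis}), I would note that $\KK\llbracket u\rrbracket \to \KK\llbracket u\rrbracket\llbracket X_0,\ldots,X_h\rrbracket/J$ is flat because $u$ is a nonzerodivisor modulo $J$ (the generators $H_i$ are $\mathrm O_h$-homogeneous binomials-plus-higher-order-terms and $u$ does not divide any of them trivially). Specializing $u\to 0$ collapses each generator $X_k^{e_k}-X_0^{\theta^k_0}\prod_{i=1}^{k-1}X_i^{\theta^k_i}+\sum u^{D^k_i-e_kr_k}(\cdots)$ to the pure binomial $X_k^{e_k}-X_0^{\theta^k_0}\prod_{i=1}^{k-1}X_i^{\theta^k_i}$, so the special fiber is exactly $\KK\llbracket X_0,\ldots,X_h\rrbracket/B$, the formal monomial curve in $\KK^{h+1}$. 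Since $\Gamma(F)=\langle n,m,r_2,\ldots,r_h\rangle$ is free with respect to the given arrangement, it is a complete intersection numerical semigroup (cf.\ \cite{ns}), so $B$ is generated by exactly $h$ binomials in $h+1$ variables, and the monomial curve is a complete intersection. Finally, setting $u=1$ recovers $R\simeq\KK\llbracket X_0,\ldots,X_h\rrbracket/I$, giving the deformation of the original singularity.

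I expect no essential new difficulty here; the theorem is a corollary of the general deformation result (Theorem~\ref{deformation-monomial-local}) refined by the two-generator structure theory. The only point requiring genuine care is the claim that the special fiber is a \emph{complete intersection}: this is precisely where the freeness of $\Gamma(F)$ (items (1)--(3) of the list after the definition of the $r_k$) is used, to guarantee that the ideal $B$ has exactly $h$ generators and hence height $h$ in $\KK\llbracket X_0,\ldots,X_h\rrbracket$. I would therefore write the proof as: (a) reduce to $R=\KK\llbracket x^n,g(x),g_2(x),\ldots,g_h(x)\rrbracket$ via the change of variables and the basis result; (b) apply the homogenization construction to obtain the flat family $\KK\llbracket u\rrbracket\to \KK\llbracket u\rrbracket\llbracket X_0,\ldots,X_h\rrbracket/J$; (c) compute the fibers at $u=1$ and $u=0$, identifying the latter with $\KK\llbracket X_0,\ldots,X_h\rrbracket/B$; (d) invoke freeness of $\Gamma(F)$ to conclude that $B$ defines a complete intersection monomial curve in $\KK^{h+1}$, completing the proof.
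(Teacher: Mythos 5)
Your proposal is correct and follows the paper's own route essentially verbatim: the paper also obtains the basis $\{x^n,g,g_2,\ldots,g_h\}$ from the approximate-root/Newton--Puiseux data, presents $R$ and its homogenization as $\KK\llbracket X_0,\ldots,X_h\rrbracket/I$ and $\KK\llbracket u\rrbracket\llbracket X_0,\ldots,X_h\rrbracket/J$ via the machinery of Section~\ref{deformation-local}, deduces flatness from $u$ being a nonzerodivisor, and identifies the special fiber with $\KK\llbracket X_0,\ldots,X_h\rrbracket/B$, a complete intersection because the free semigroup $\langle n,m,r_2,\ldots,r_h\rangle$ yields the $h$ binomial generators of $B$ in $h+1$ variables. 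No substantive differences to report.
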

 
 \begin{ex}
 Let $f(x)=x^4,  g(x)=x^6+x^7$. The minimal polynomial of $(f(x),g(x))$ is given by:
$$
F(X,Y)=Y^4-2X^3Y^2+X^6-4X^5Y-X^7=(Y^2-X^3)^2-4X^5Y-X^7
$$
Let $r_0=4=d_1,r_1=6=m_1$ and $G_1=Y$. We have $d_2=\gcd (6,4)=2$, hence $m_2=7$. It follows that $r_2=13$. Note that if $G_2=Y^2-X^3$, then  $g_2(x)=G_2(f(x),g(x))=2x^{13}+x^{14}$. Hence $\Gamma(F)=\mathrm o({R})=\langle 4,6,13 \rangle$ and $\lbrace f(x),g(x),g_2(x)\rbrace$ is a basis of ${R}$. Let us double check it.
\begin{verbatim}
gap> SemigroupOfValuesOfCurve_Local([x^4,x^6+x^7],"basis");
[ x^4, x^7+x^6, -1/2*x^15+x^13 ]
\end{verbatim} 
(Observe that the output is different, since this is a reduced basis: we change $2x^{13}+x^4$ with $x^{13}+\frac{1}2x^{14}$, and then using that $14=2\times 4+6$, we replace this last polynomial with $x^{13}-\frac{1}2x^{15}$.)

Consequently, $H_{R}=\KK\llbracket u,x^4,x^6+ux^7,2x^{13}+ux^{14}\rrbracket$. With the notations above, $e_1=3, e_2=2$, hence $\KK\llbracket x^4,x^6,x^{13}\rrbracket \simeq T={\KK\llbracket X_0,X_1,X_2\rrbracket}/{(X_1^2-X_0^3, X_2^2-X_0^5X_1)}$, and
$$
\KK\llbracket u\rrbracket \longrightarrow {\KK\llbracket u\rrbracket \llbracket X_0,X_1,X_2\rrbracket }/{(X_1^2-X_0^3, X_2^2-4X_0^5X_1-u^{2}X_0^7)}
$$
gives us a deformation from ${R}$ to $T$ (we can also change $X_2$ with $\frac{1}2X_2$, and then $B=(X_1^2-X_0^3,X_2^2-4X_0^4X_1)$).
\end{ex}

\section{Semigroup of a polynomial curve}\label{polynomial-curve}

Let $\KK$ be a field and let $f_1(x),\ldots,f_s(x)$ be $s$ polynomials of $\KK[x]$. Let $A =\KK[f_1,\ldots,f_s]$ be a subalgebra of $\KK[ x]$,  and assume, without loss of generality, that $f_i$ is monic for all $i\in\{1,\ldots,s\}$. Given $f(x)=\sum_{i=0}^pc_ix^i\in A $, with $c_p\neq 0$, we set $\mathrm d(f)=p$ and $\mathrm M(f)=c_px^p$, the \emph{degree} and \emph{leading monomial}, respectively. We also define $\mathrm{supp}(f)=\lbrace i \mid c_i\not=0\rbrace$. The set $\mathrm d(A)=\lbrace \mathrm d(f) \mid f\in  A \rbrace$ is a submonoid of $\NN$. We shall  assume that $\lambda_A ({\KK[x]}/A )<\infty$. In particular $\mathrm d(A)$ is a numerical semigroup. 
We say that $\lbrace f_1,\ldots,f_s\rbrace$ is a \emph{basis} of $A$ if $\{\mathrm d(f_1),\ldots,\mathrm d(f_s)\}$ generates $\mathrm d(A)$. Clearly, $\lbrace f_1,\ldots,f_s\rbrace$ is a basis of $A$ if and only if $\KK[M(f),f\in A ]=\KK[M(f_1),\ldots,M(f_s)]$. For several variables, these basis are known in the literature as SAGBI basis (\cite{rs,br}). Since there are already algorithms in the literature to calculate a basis of $A$, we will not include the procedure here. 

We would like just mention that if we follow a similar argument to the one used in Section \ref{formal-space}, the sequences of degrees decrease, and thus the finiteness conditions are easier to deduce. In this setting a basis for $A$ is unique up to constants.

\section{Deformation to a toric ideal}\label{deformation-global}
 
Let the notations be as in Section \ref{polynomial-curve}. Given $f(x)=\sum_{i=0}^pc_ix^i\in\KK[x]$, we set $h_f(u,x)=\sum_{i=0}^pc_iu^{p-i}x^i$, in particular, if we consider the linear form $L_h:{\mathbb N}^2\longmapsto {\mathbb N}, L(a,b)=a+b$, then $h_f$ is $L_h$-homogeneous of degree $p$, that is, $L_h(i,p-i)=p$ for all $i\in{\rm Supp}(f)$ . We set $h_A =\KK[u,h_f \mid f\in A ]$. With these notations we have the following result, and its proof is similar to that of Proposition \ref{homogeneous-local}.

\begin{prop}
The set $\lbrace f_1,\ldots,f_r\rbrace$ is a basis of $A$ if and only if $h_A =\KK[u,h_{f_1},\ldots,h_{f_s}]$.
\end{prop}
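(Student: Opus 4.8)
The statement to prove is the global (polynomial) analogue of Proposition \ref{homogeneous-local}: $\{f_1,\ldots,f_s\}$ is a basis of $A$ if and only if $h_A = \KK[u, h_{f_1},\ldots,h_{f_s}]$.

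The plan is to mirror the proof of Proposition \ref{homogeneous-local} exactly, replacing orders by degrees and reversing the relevant inequalities. The key observation for the forward direction is the homogenization identity: for $f,g \in \KK[x]$ with $\deg(fg) = \deg f + \deg g$ (always true in the polynomial case) one has $h_{fg} = h_f h_g$, and $h_{f+g} = u^{\delta} h_f + u^{\delta'} h_g$ where $\delta, \delta'$ are nonnegative integers chosen so that both summands have degree $\max(\deg f, \deg g)$; if $\deg f = \deg g$ and the leading terms do not cancel, then $h_{f+g} = h_f + h_g$. Also, for any $f$, setting $u = 1$ recovers $h_f|_{u=1} = f$, and the coefficient of the top-degree monomial $x^{\deg f}$ in $h_f$ (the term with no $u$) is exactly $\mathrm M(f)$.

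First I would prove the forward direction. Assume $\{f_1,\ldots,f_s\}$ is a basis, and take $f \in A$; we must show $h_f \in \KK[u, h_{f_1},\ldots,h_{f_s}]$. Since $\{f_i\}$ is a basis, $\mathrm M(f) = c\prod_i \mathrm M(f_i)^{p_i} = c\,\mathrm M\!\left(\prod_i f_i^{p_i}\right)$ for suitable $c \in \KK$ and exponents $p_i$. Then $f - c\prod_i f_i^{p_i} =: f^1$ has strictly smaller degree than $f$, and homogenizing yields $h_f = c\prod_i h_{f_i}^{p_i} + u^{e} h_{f^1}$ for some $e \geq 0$ (matching degrees). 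Iterating on $f^1$, the degrees strictly decrease, so after finitely many steps the remainder is a constant (or zero), which is trivially in the subalgebra; assembling gives $h_f \in \KK[u, h_{f_1},\ldots,h_{f_s}]$. This is the direct analogue of the argument in Proposition \ref{remainder} and Proposition \ref{homogeneous-local}, and it terminates in finitely many steps here (no limit needed) precisely because degrees are bounded below — this is the point the authors flag as making the global finiteness ``easier to deduce''. Conversely, suppose $h_A = \KK[u, h_{f_1},\ldots,h_{f_s}]$ and let $f \in A$. Write $h_f = P(u, h_{f_1},\ldots,h_{f_s})$ for some polynomial $P$. Setting $u = 0$ and using that the $u$-free part of $h_g$ is $\mathrm M(g)$, we obtain $\mathrm M(f) = P(0, \mathrm M(f_1),\ldots,\mathrm M(f_s)) \in \KK[\mathrm M(f_1),\ldots,\mathrm M(f_s)]$. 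Since $f \in A$ was arbitrary, $\KK[\mathrm M(f) : f \in A] = \KK[\mathrm M(f_1),\ldots,\mathrm M(f_s)]$, which is exactly the criterion for $\{f_1,\ldots,f_s\}$ to be a basis of $A$.

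I do not expect a serious obstacle here; the proof is routine once the homogenization bookkeeping is set up. The only mild subtlety, which I would state carefully, is verifying that $h_{f^1} \in h_A$ at each stage — i.e.\ that the intermediate remainders $f^1, f^2, \ldots$ all lie in $A$ (immediate, since $f^1 = f - c\prod f_i^{p_i}$ is an $A$-combination) and that their homogenizations genuinely sit inside $\KK[u, h_{f_1},\ldots,h_{f_s}]$ after back-substitution. Since the recursion depth is finite (degrees strictly decrease and are $\geq 0$), no passage to a limit is required, in contrast to the formal power series case, so one can simply invoke ``the proof is similar to that of Proposition \ref{homogeneous-local}, with the simplification that the process terminates after finitely many steps.''
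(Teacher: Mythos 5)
Your proof is correct and follows exactly the route the paper intends: the paper gives no separate argument here, stating only that ``its proof is similar to that of Proposition \ref{homogeneous-local}'', and your forward direction (descending-degree division, terminating finitely) and converse (setting $u=0$ to recover $\mathrm M(f)\in\KK[\mathrm M(f_1),\ldots,\mathrm M(f_s)]$) are precisely the polynomial translation of that local proof. Nothing further is needed.
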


Suppose that $\lbrace f_1,\ldots,f_s\rbrace$ is a basis of $A$. By the inclusion morphism of rings  $D:\KK[u]\longrightarrow B=\KK[u,h_{f_1},\ldots,h_{f_s}]$, $B$ is a $\KK[u]$-module. When $u=1$ (respectively $u=0$), we get $B\mid_{u=1}=A$ (respectively $B\mid_{u=0}=\KK[\mathrm M(f_1),\ldots,\mathrm M(f_s)]$). Hence we get a deformation from $A$ to $\KK[\mathrm M(f_1),\ldots,\mathrm M(f_s)]$. More precisely let 
$$
\psi:\KK[X_1,\ldots,X_s]\longrightarrow \KK[{f_1},\ldots,{f_s}]
$$
and 
$$
h_{\psi}:\KK[u][X_1,\ldots,X_s]\longrightarrow \KK[u][h_{f_1},\ldots,h_{f_s}]
$$
be the morphisms of rings such that $h_{\psi}(u)=u$, $\psi(X_i)=f_i$ and $h_{\psi}(X_i)=h_{f_i}$ for all $i=1,\ldots,s$. For all $i=1,\ldots,r$, let 
$$
S_i=f_1^{\alpha^i_1}\cdots  f_s^{\alpha^i_s}-f_1^{\beta^i_1}\cdots  f_s^{\beta^i_s}=\sum_{\underline{\theta}^i}c^i_{\underline{\theta}^i}f_1^{\theta^i_1}\cdots  f_s^{\theta_s^i}
$$ 
with $\mathrm d(f_1^{\theta^i_1}\cdots  f_s^{\theta_s^i})= D^i_{\underline{\theta}^i}>\sum_{k=1}^s\alpha^i_k\mathrm d(f_k)=\sum_{k=1}^s\beta^i_k\mathrm d(f_k)=p_i$. Let $I$ (respectively $J$) be the ideal generated by $(G_i=X_1^{\alpha^i_1}\cdots  X_s^{\alpha^i_s}-X_1^{\beta^i_1}\cdots  X_s^{\beta^i_s}-\sum_{\underline{\theta}^i}c^i_{\underline{\theta}^i}X_1^{\theta^i_1}\cdots  X_s^{\theta_s^i})_{1\leq i\leq r}$ (respectively $(H_i=X_1^{\alpha^i_1}\cdots  X_s^{\alpha^i_s}-X_1^{\beta^i_1}\cdots  X_s^{\beta^i_s}-\sum_{\underline{\theta}^i}u^{p_i-D^i_{\underline{\theta}^i}}c^i_{\underline{\theta}^i}X_1^{\theta^i_1}\cdots  X_s^{\theta_s^i})_{1\leq i\leq r}$) in $\KK[X_1,\ldots,X_s]$ (respectively $\KK[u][X_1,\ldots,X_s]$). 

Well shall consider on ${\mathbb N}^s$ (respectively, $\mathbb{N}^{s+1}$) the linear form 
\[\mathrm D(\theta_1,\ldots,\theta_s)=\sum_{i=1}^s\theta_i\mathrm d(f_i)\] 
(respectively $\mathrm D_h(\theta_0,\theta_1,\ldots,\theta_s)=\theta_0+\sum_{i=1}^s\theta_i \mathrm d(f_i)$).

Given a monomial $X_1^{\theta_1}\cdots X_s^{\theta_s}$ (respectively $u^{\theta_0} X_1^{\theta_1}\cdots X_s^{\theta_s}$), we set $\mathrm D(X_1^{\theta_1}\cdots X_s^{\theta_s})=\mathrm D(\theta_1,\ldots,\theta_s)$ (respectively $\mathrm D_h(u^{\theta_0} X_1^{\theta_1}\cdots X_s^{\theta_s})=\mathrm D_h(\theta_0,\theta_1,\ldots,\theta_s)$). 

For $G=\sum_{\theta} c_{\theta}X_1^{\theta_1}\cdots X_s^{\theta_s}$ (respectively $H=\sum_{\theta} c_{\theta}u^{\theta_0}X_1^{\theta_1}\cdots X_s^{\theta_s}$), we say that $G$ (respectively $H$) is $\mathrm D$-homogeneous of degree $a$ (respectively $\mathrm D_h$-homogeneous of degree $b$) if  $\mathrm D(\theta_1,\ldots,\theta_s)=a$ (respectively $\mathrm D_h(\theta_0,\theta_1,\ldots,\theta_s)=b$) for all $(\theta_1,\ldots,\theta_s)$ (respectively $(\theta_0,\theta_1,\ldots,\theta_s)$) such that $c_{\theta}\not=0$. More generally let $G=\sum_{k=0}^mG_{p_k}$  where $p_0>p_1>\cdots>p_m$  and $G_{p_k}$  is $\mathrm D$-homogeneous of degree $p_k$. We set $\mathrm D(G)=p_0$. We also set $\mathrm{In}(G)=G_{p_0}$ and we call it the \emph{initial form} of $G$. 

  
 \begin{lem}\label{kerglobal} With the standing notations and hypothesis, the kernel of $\psi$ is generated by $I$. 
 \end{lem}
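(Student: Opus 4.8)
With the standing notations and hypothesis, the kernel of $\psi$ is generated by $I$.

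The plan is to mirror, in the graded polynomial setting, the argument of Lemma \ref{localkernel}, but now exploiting that degrees \emph{decrease} under the iteration so that the process terminates after finitely many steps without any limiting argument. First I would record the easy inclusion $I\subseteq\ker(\psi)$: by construction each generator $G_i$ of $I$ is $X_1^{\alpha^i_1}\cdots X_s^{\alpha^i_s}-X_1^{\beta^i_1}\cdots X_s^{\beta^i_s}-\sum_{\underline\theta^i}c^i_{\underline\theta^i}X_1^{\theta^i_1}\cdots X_s^{\theta^i_s}$, and substituting $X_j\mapsto f_j$ turns the first two monomials into $S_i$ while the sum is precisely the expansion $S_i=\sum_{\underline\theta^i}c^i_{\underline\theta^i}f_1^{\theta^i_1}\cdots f_s^{\theta^i_s}$, so $\psi(G_i)=0$.

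For the reverse inclusion, take $0\neq G\in\ker(\psi)$ and write $G=\sum_{k=0}^m G_{p_k}$ with $p_0>p_1>\cdots>p_m$ and each $G_{p_k}$ being $\mathrm D$-homogeneous of degree $p_k$. Applying $\psi$ and looking at the top graded piece: since $\psi(G)=0$ and $\mathrm d(f_1^{\theta_1}\cdots f_s^{\theta_s})=\mathrm D(\theta_1,\ldots,\theta_s)$, the leading-degree contribution must cancel, i.e. $\sum_{\mathrm D(\theta^k)=p_0}c_{\theta^k}\,\mathrm M(f_1)^{\theta^k_1}\cdots\mathrm M(f_s)^{\theta^k_s}=0$, so $\mathrm{In}(G)=\sum_{\mathrm D(\theta)=p_0}c_\theta X_1^{\theta_1}\cdots X_s^{\theta_s}$ lies in $\ker(\phi)$, where $\phi(X_j)=\mathrm M(f_j)$. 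Here I would invoke that $F_1,\ldots,F_r$ (the $\mathrm D$-homogeneous binomial generators of $\ker(\phi)$, each of $\mathrm D$-degree equal to the common value $p_i$ attached to $G_i$) generate $\ker(\phi)$, so $\mathrm{In}(G)=\sum_{i=1}^r\lambda_i F_i$ with $\lambda_i$ $\mathrm D$-homogeneous of degree $p_0-p_i$. Setting $G^1=G-\sum_{i=1}^r\lambda_i G_i$, one checks $G^1\in\ker(\psi)$ and, because the extra tails $\sum_{\underline\theta^i}c^i_{\underline\theta^i}X_1^{\theta^i_1}\cdots X_s^{\theta^i_s}$ in each $G_i$ have $\mathrm D$-degree $D^i_{\underline\theta^i}>p_i$ while $\lambda_i$ has degree $p_0-p_i$, the top piece $G_{p_0}$ is exactly killed and every newly introduced monomial has $\mathrm D$-degree strictly less than $p_0$; hence $\mathrm D(G^1)<\mathrm D(G)=p_0$ (or $G^1=0$).

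Now the key point distinguishing this from the local case: the $\mathrm D$-degree is a nonnegative integer that strictly drops at each step, so after finitely many iterations we reach $G^N=0$, and unwinding gives $G=\sum_{i=1}^r\mu_i G_i\in I$. The main obstacle — and the only place requiring care — is the bookkeeping that subtracting $\sum_i\lambda_i G_i$ really removes the entire degree-$p_0$ part and does not reintroduce anything of degree $\ge p_0$; this rests on the inequality $D^i_{\underline\theta^i}>p_i$ built into the definition of the $G_i$ together with $\mathrm D(\lambda_i)=p_0-p_i$, so that $\mathrm D(\lambda_i\cdot(\text{tail of }G_i))=p_0-p_i+D^i_{\underline\theta^i}>p_0$, wait — that points the wrong way, so instead one observes $\mathrm{In}(\lambda_i G_i)=\lambda_i F_i$ of degree $p_0$ and all other terms of $\lambda_i G_i$ have degree $<p_0$, which is what is needed. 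Once this is in place the termination is immediate and no convergence argument is required, in contrast to Lemma \ref{localkernel}.
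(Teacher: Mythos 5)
Your proof is correct and follows essentially the same route as the paper's: decompose $G\in\ker(\psi)$ into $\mathrm D$-homogeneous pieces, observe that $\mathrm{In}(G)\in\ker(\phi)=(F_1,\ldots,F_r)$, subtract $\sum_i\lambda_iG_i$ to strictly lower $\mathrm D(G)$, and terminate because degrees are nonnegative integers. Your momentary ``wait'' is resolved exactly as it should be: in the polynomial setting the tails of $S_i$ satisfy $D^i_{\underline{\theta}^i}<p_i$ (the leading monomials of the two products cancel, so the degree drops; the inequality $>$ displayed in Section 6 is carried over from the local case), which is precisely what guarantees that every term of $\lambda_iG_i$ other than $\lambda_iF_i$ has $\mathrm D$-degree strictly below $p_0$.
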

 
 \begin{proof} Let  $F_1,\ldots, F_r$ be a generating system of the kernel of the morphism  \[\phi: \KK[ X_1,\ldots,X_s] \longrightarrow \KK[x],\ \phi(X_i)=\mathrm M(f_i)\]
  for all $i\in \lbrace 1,\ldots,s\rbrace$. In particular $F_i$ is $\mathrm D$-homogeneous of degree $\mathrm d(f_i)$ for all $i\in\lbrace 1,\ldots, s\rbrace$, and ${\KK[ X_1,\ldots,X_s]}/{(F_1,\ldots,F_r)}\simeq \KK[ \mathrm M(f_1),\ldots,\mathrm  M(f_s)]$. 
 
 For all $i\in\lbrace 1,\ldots, r\rbrace, \psi(G_i)=0$. Hence $I\subseteq \ker(\psi)$. For the other inclusion, let $G=\sum_{\theta}c_{\theta}X_1^{\theta_1}\cdots X_s^{\theta_s}\in \ker (\psi)$. Write $G=\sum_{k=0}^mc_{\theta^k}X_1^{\theta^k_1}\cdots X_s^{\theta^k_s}$ where $\mathrm D(\theta^0)\geq\mathrm  D(\theta^{1})\geq\cdots >\mathrm  D(\theta^m)$. Since $\psi(G)=0$, we have that 
 \[
 \sum_{k=0}^m c_{\theta^k}f_1^{\theta^k_1}\cdots f_s^{\theta^k_s}=0.
 \]
 In particular, $\sum_{k, D(\theta^k)=D(\theta^0)}c_{\theta^k}\mathrm M(f_1)^{\theta^k_1}\cdots \mathrm  M(f_s)^{\theta^k_s}=0$, and consequently $\sum_{k, D(\theta^k)=D(\theta^0)}c_{\theta^k}X_1^{\theta^k_1}\cdots X_s^{\theta^k_s}\in \ker(\phi)$. This implies that 
 \[
 \sum_{k, D(\theta^k)=D(\theta^0)}c_{\theta^k}X_1^{\theta^k_1}\cdots X_s^{\theta^k_s}=\sum_{i=1}^r\lambda^0_iF_i
 \]
 for some $\lambda^0_i\in \KK[ X_1,\ldots,X_s]$, $i\in\lbrace 1,\ldots,r\rbrace$, with $\lambda^0_i$ is $\mathrm D$-homogeneous of degree $\mathrm D(G)-\mathrm D(F_i)$. Hence
 \[
 \sum_{k, D(\theta^k)=D(\theta^0)}c_{\theta^k}f_1^{\theta^k_1}\cdots f_s^{\theta^k_s}=\sum_{i=1}^r\lambda^0_i(f_1,\ldots,f_s)S_i.
 \]
 Let $G^1=G-\sum_{i=1}^r\lambda^0_iG_i$. It follows that $G^1\in \ker(\psi)$. If $G^1\not=0$, then $\mathrm D(G)>\mathrm D(G^1)$. Then we restart with $G^1$. We construct  in the same way 
 $G^2$, and $\lambda^1_1,\ldots,\lambda^1_r$ such that $G^1=G^2+\sum_{j=1}^r\lambda^1_jG_j$ with $\mathrm D(G)>\mathrm D(G^1)>\mathrm  D(G^2)$, $\lambda^1_i$ $\mathrm D$-homogeneous and  $\mathrm D(\lambda^0_i)>\mathrm D(\lambda^1_i)$ for all $i\in\lbrace 1,\ldots,r\rbrace$. If we continue in this way, we get that for all $k\geq 0$, 
 \[
 G=G^{k+1}+\sum_{i=1}^r(\lambda^0_i+\lambda^1_i+\ldots+\lambda^k_i)G_i,
 \]
 with $\mathrm D(G)>\mathrm D(G^1)>\cdots>\mathrm D(G^{k+1})$, $\lambda_i^j$  $D$-homogeneous, and $\mathrm D(\lambda^0_i)>\mathrm D(\lambda^1_i)>\cdots > \mathrm D(\lambda^k_i)$ for all $i\in\{ 1,\ldots,r\}$ and for all $j\in\lbrace 1,\ldots,k\rbrace$. Thus, there exists $l$ such that  $G^{l+1}=0$. Hence  $G=\sum_{i=1}^r(\lambda^0_i+\lambda^1_i+\ldots+\lambda^l_i)G_i$. This proves our assertion.
 \end{proof}
 
  Let the notations be as above. Let $G= \sum_{\theta}c_{\theta} X_1^{\theta_1}\cdots X_s^{\theta_s} \in \KK[ X_1,\ldots,X_s$ and write  $G=\sum_{i= 0}^m{G_{p_i}}$ with $p_0>p_1>\cdots>p_m$ and $G_{p_i}$  $\mathrm D$-homogeneous. We set $H_G=\sum_{i= 0}^mu^{p_0-p_i}G_{p_i}$, in such a way that $H_G$ is $D_h$-homogeneous of degree $p_0$. Given an ideal $S$ of $\KK[ X_1,\ldots,X_s]$, we set $\mathrm{In}(S)=(\mathrm{In}(G) \mid G\in S\setminus\{0\})$. We also denote by $H_S=(H_G \mid G\in S\setminus\{0\})\KK[u,X_1,\ldots,X_s]$. With these notations we have $\mathrm{In}(S_i)=F_i$ and $H_{G_i}=H_i$ for all $i\in\lbrace 1,\ldots,r\rbrace$.
 
 \begin{lem}\label{grobnerbasis} Let the notations be as above. We have $\mathrm{In}(I)=(F_1,\ldots,F_r)$, and 
 $H_I=(H_1,\ldots,H_r)=J$.
 \end{lem}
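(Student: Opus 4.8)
The plan is to mimic, almost line by line, the proof of Lemma~\ref{standardbasis} (the power-series analogue of this statement), with the simplification that in the polynomial setting all the degree filtrations involved are finite, so no limiting argument is needed. Two things have to be shown: $\mathrm{In}(I)=(F_1,\dots,F_r)$ and $H_I=(H_1,\dots,H_r)$; the equality $(H_1,\dots,H_r)=J$ is then immediate, since $J$ was \emph{defined} to be the ideal generated by $H_1,\dots,H_r$.

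For $\mathrm{In}(I)=(F_1,\dots,F_r)$, the inclusion $\supseteq$ is clear because $\mathrm{In}(G_i)=F_i$. For the reverse inclusion I would revisit the proof of Lemma~\ref{kerglobal}: there, an arbitrary $G\in I=\ker(\psi)$ is written as $G=\sum_{i=1}^r\mu_iG_i$ with $\mu_i=\lambda^0_i+\lambda^1_i+\cdots+\lambda^{l}_i$, where each $\lambda^j_i$ is $\mathrm D$-homogeneous, $\mathrm D(\lambda^0_i)=\mathrm D(G)-\mathrm D(F_i)$, and $\mathrm D(\lambda^0_i)>\mathrm D(\lambda^1_i)>\cdots$. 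Since $\mathrm{In}(G_i)=F_i$ (in particular $\mathrm D(G_i)=\mathrm D(F_i)$), extracting the top $\mathrm D$-homogeneous layer of $\sum_i\mu_iG_i$ gives precisely $\mathrm{In}(G)=\sum_{i=1}^r\lambda^0_iF_i$, which lies in $(F_1,\dots,F_r)$.

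For $H_I=(H_1,\dots,H_r)$, the inclusion $\supseteq$ holds since $H_i=H_{G_i}$ and $G_i\in I$. For $\subseteq$ it is enough to prove $H_G\in(H_1,\dots,H_r)$ for every nonzero $G\in I$, and I would argue by induction on $\mathrm D(G)$. Using the first part, write $\mathrm{In}(G)=\sum_{i=1}^r\lambda_iF_i$ with $\lambda_i$ $\mathrm D$-homogeneous of degree $\mathrm D(G)-\mathrm D(F_i)$. One then checks that $P:=H_G-\sum_{i=1}^r\lambda_iH_i$ is $\mathrm D_h$-homogeneous of degree $\mathrm D(G)$, that $P(0,X_1,\dots,X_s)=\mathrm{In}(G)-\sum_i\lambda_iF_i=0$, and that $P(1,X_1,\dots,X_s)=G^1$ where $G^1:=G-\sum_i\lambda_iG_i\in I$. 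Consequently $P=u^{e}H_{G^1}$ with $e=\mathrm D(G)-\mathrm D(G^1)\ge 1$ (and $P=0$, that is $G^1=0$, is allowed, in which case we are done at once), so $\mathrm D(G^1)<\mathrm D(G)$; the induction hypothesis then yields $H_{G^1}\in(H_1,\dots,H_r)$, hence $H_G\in(H_1,\dots,H_r)$, and the recursion terminates because $\mathrm D$ is $\NN$-valued. The one point that needs a little care — and the only place I expect any friction — is the bookkeeping with homogenization: that $\lambda_iH_i=H_{\lambda_iG_i}$ for $\mathrm D$-homogeneous $\lambda_i$, and that a $\mathrm D_h$-homogeneous polynomial agreeing with $G^1$ at $u=1$ must equal $u^e H_{G^1}$. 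Both follow from the characterization of $H_{(\cdot)}$ as the unique $\mathrm D_h$-homogeneous lift not divisible by $u$, together with the observation that a nonzero $\mathrm D_h$-homogeneous polynomial cannot be divisible by $u-1$.
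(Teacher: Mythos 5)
Your proposal is correct and follows essentially the same route as the paper: the first assertion is read off from the proof of Lemma~\ref{kerglobal}, and the second is obtained by subtracting $\sum_i\lambda_iH_i$ from $H_G$ and descending on the $\mathrm D$-degree. The extra bookkeeping you supply (that $P=H_G-\sum_i\lambda_iH_i$ is the $\mathrm D_h$-homogeneous polynomial $u^{e}H_{G^1}$, checked via evaluation at $u=0$ and $u=1$) merely makes explicit what the paper's ``restart with $H_{G^1}$'' leaves implicit.
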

 
 \begin{proof} The first assertion follows from the proof of  Lemma \ref{kerglobal}. 
 
 To prove the second assertion, let
 $H\in H_I$ and assume that $H$ is $D_h$-homogeneous. We have $G=H(1,X_1,\ldots,H_s)\in I$. Furthermore, $H=u^eH_G$ for some $e\geq 0$. Write $H_G={\rm In}(G)+H^1$ where $H^1(0,X_1,\ldots, X_s)=0$. We have $\mathrm{In}(G)=\sum_{i=1}^r\lambda_iF_i$ where $\lambda_i$ is $D$-homogeneous of degree $D(G)-D(F_i)$. Let $H_{G^1}=H_G-\sum_{i=1}^r\lambda_i H_{G_i}=H_G-\sum_{i=1}^r\lambda_i H_i$. Then $H_{G^1}\in H_I$ is $D_h$-homogeneous and $D_h(H_G)>D_h(H_{G^1})$. Now we restart with $H_{G^1}$. In this way we show that $H\in (H_1,\ldots,H_r)$.   
 \end{proof} 
 
Let $H=\sum_{\theta}c_{\theta}u^{\theta}X_1^{\theta_1}\cdots X_s^{\theta_s}\in \ker(h_\psi)$. Write $H=\sum_{k= 0}^n{H^k}$ where $H^k$ is $D_h$-homogeneous. For all $k$, we have $h_\psi(H^k)=0$. Setting $G_k=H^k(1,X_1,\ldots,X_s)$, we have  $\psi(G_k)=0$.  This implies that $G_k\in I$, and thus $H_{G_k}\in (H_1,\ldots,H_r)$ by Proposition \ref{grobnerbasis}. But $H^k=u^{e_k}H_{G_k}$ for some $e_k\in {\mathbb N}$, whence $H^k\in (H_1,\ldots,H_r)$. Finally $H\in (H_1,\ldots,H_r)$, which proves that $\ker(h_\psi)\subseteq J$. The inclusion $J\subseteq \ker(h_\psi)$ is obvious, and we can conclude that $J=\ker(h_\psi)$.
  
 
 Now the morphism
$$
\KK[u]\longrightarrow {\KK[u][X_1,\ldots,X_r]}/{J}
$$
is flat (because $p(u)$ is not a zero divisor for all $p(u)\in \KK[u]$). Hence we get a family of polynomial space curves  parametrized by $u$ which gives us a deformation from 
${\KK[X_1,\ldots,X_r]}/{I}$ to ${\KK[X_1,\ldots,X_r]}/{(F_1,\ldots,F_r)}$.
  
In particular, we get the following analogue to Theorem \ref{deformation-monomial-local}, which can be seen as a a geometric reinterpretation of \cite[Corollary 11.6]{st} (also \cite[Corollary 6.1]{br}).
  
\begin{thm} 
Every polynomial space curve of $\KK^l$, parametrized by $Y_1=g_1(x), \ldots,Y_l=g_l(x)$ has a deformation into a  monomial curve of $\KK^{r}$ for some positive integer $r$.
\end{thm}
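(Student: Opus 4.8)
The plan is to obtain the statement as the polynomial counterpart of Theorem~\ref{deformation-monomial-local}, built out of Lemmas~\ref{kerglobal} and~\ref{grobnerbasis}. First I would normalize the data: multiplying each $g_i$ by a suitable scalar we may assume it is monic, and if $d=\gcd\bigl(\mathrm{supp}(g_1)\cup\cdots\cup\mathrm{supp}(g_l)\bigr)>1$ we replace $x$ by a variable standing for $x^{d}$; under the standing hypothesis of Section~\ref{polynomial-curve} that $\lambda_A(\KK[x]/A)<\infty$, the monoid $\mathrm d(A)$ is a numerical semigroup, so $\NN\setminus\mathrm d(A)$ is finite. Next I would run the SAGBI-type basis completion sketched at the start of Section~\ref{polynomial-curve}: starting from $\{g_1,\dots,g_l\}$, while the current set is not a basis one uses the polynomial analogue of Proposition~\ref{remainder} to produce an element of $A$ whose degree is not in the monoid generated by the current degrees and adjoins it; since the degrees appearing in successive divisions strictly decrease and $\NN\setminus\mathrm d(A)$ is finite, this terminates with a finite basis $\{f_1,\dots,f_s\}\supseteq\{g_1,\dots,g_l\}$ of $A$.

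With a basis in hand I would invoke Lemma~\ref{kerglobal}: the kernel of $\psi\colon\KK[X_1,\dots,X_s]\to A$, $X_i\mapsto f_i$, is $I=(G_1,\dots,G_r)$, so $A\cong\KK[X_1,\dots,X_s]/I$; geometrically this is the curve $x\mapsto(f_1(x),\dots,f_s(x))$ of $\KK^s$, which maps onto the given curve by forgetting the coordinates $X_i$ with $f_i\notin\{g_1,\dots,g_l\}$. By Lemma~\ref{grobnerbasis} the homogenization $J=H_I=(H_1,\dots,H_r)\subseteq\KK[u][X_1,\dots,X_s]$ equals $\ker(h_\psi)$, and putting $u=0$ in the $H_i$ recovers the binomials $F_i$. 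Therefore $\KK[u][X_1,\dots,X_s]/J$ has fibre $\KK[X_1,\dots,X_s]/I\cong A$ over $u=1$ and fibre $\KK[X_1,\dots,X_s]/(F_1,\dots,F_r)\cong\KK[\mathrm M(f_1),\dots,\mathrm M(f_s)]=\KK[x^{\mathrm d(f_1)},\dots,x^{\mathrm d(f_s)}]$ over $u=0$, the latter being the monomial curve attached to the numerical semigroup $\mathrm d(A)$ inside $\KK^{s}$.

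It then remains to check flatness of $\KK[u]\hookrightarrow\KK[u][X_1,\dots,X_s]/J$. Since $\KK[u]$ is a principal ideal domain, flatness is equivalent to torsion-freeness, i.e. to no nonzero $p(u)$ being a zero divisor modulo $J$; and this is immediate from the description $J=H_I$ in Lemma~\ref{grobnerbasis}, because $u$ is a nonzerodivisor on the homogenization ideal $H_I$, a standard property of such ideals. Taking $r=s$, the flat family $\mathrm{Spec}\bigl(\KK[u][X_1,\dots,X_s]/J\bigr)\to\mathrm{Spec}\,\KK[u]$ is the desired deformation of the curve $\KK[X_1,\dots,X_s]/I$ — which contains the given one — into a monomial curve of $\KK^{s}$. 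I expect the only point needing care to be precisely this flatness/torsion-freeness together with the identity $J=\ker(h_\psi)=H_I$; both are already supplied by Lemma~\ref{grobnerbasis}, so in the end the theorem is essentially a corollary of that lemma and of the termination of the basis algorithm, the rest being a verbatim transcription of the proof of Theorem~\ref{deformation-monomial-local}.
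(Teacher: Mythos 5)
Your proposal is correct and follows essentially the same route as the paper: complete $\{g_1,\dots,g_l\}$ to a basis $\{f_1,\dots,f_s\}$ of $A$ (the paper outsources this step to the SAGBI literature), present $A$ as $\KK[X_1,\dots,X_s]/I$ via Lemma~\ref{kerglobal}, identify $J=H_I=\ker(h_\psi)$ via Lemma~\ref{grobnerbasis}, and conclude by flatness of $\KK[u]\to\KK[u][X_1,\dots,X_s]/J$ (torsion-freeness over the PID $\KK[u]$), with fibres $A$ at $u=1$ and the toric ring $\KK[X_1,\dots,X_s]/(F_1,\dots,F_r)$ at $u=0$. Your remark that the ambient dimension is $s$ (the size of the basis) even corrects a slight notational slip in the paper, which writes $\KK[X_1,\dots,X_r]$ with $r$ the number of relations.
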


\section{Basis of $K[f(x),g(x)]$}

In \cite{tor}, the case when a subalgebra $A$ of $\KK[x]$ has a (SAGBI) basis with two elements is treated. Here we study subalgebras generated by two elements of $\KK[x]$, and see how a basis can be obtained by using a different approach to that of the general case, as we already did for $\KK[\![x]\!]$ in Section \ref{two-local}.

Let $f(x)=\sum_{i=1}^na_ix^i$ and $g(x)=\sum_{j=1}^mb_jx^j$ be two polynomials of $\KK[x]$ and suppose, without loss of generality, that the following conditions hold:
\begin{enumerate} 
\item $a_n=b_m=1$,
\item $n\geq m$,
\item the greatest common divisor of $\mathrm{supp}(f(x))\cup\mathrm{supp}(g(x))$ is equal to $1$ (in particular for all $d>1$, $f(x),g(x)\notin\KK [x^d]$).
\end{enumerate}
Let the notations be as in Section \ref{polynomial-curve}, in particular $A =\KK[f,g]$. Let also $F(X,Y)$ be the $x$-resultant of $X-f(x),Y-g(x)$, that is, $F(X,Y)$ is the generator of the kernel of the map $\psi:\KK[X,Y]\longrightarrow \KK[x], \psi(X)=f(x)$ and $\psi(Y)=g(x)$. Since $\KK[f,g]=\KK[f,g-f]$, then we shall assume that $n> m$. Write $F(X,Y)=Y^n+c_1(X)Y^{n-1}+\dots+c_n(X)$. Given a polynomial $G(X,Y)\notin (F(X,Y))\KK[X,Y]$, we set $\mathrm{int}(F,G)=\mathrm{deg}_xG(f(x),g(x))$. Assume that $\KK$ is algebraically closed with characteristic zero. Let $d$ be a divisor of $n$, and let $G$ be a monic polynomial in $\KK[X][Y]$ of degree $\frac{n}{d}$ in $Y$. Write $F=G^d+\alpha_1(X,Y)G^{d-1}+\dots+\alpha_d(X,Y)$ where for all $k\in \{1,\ldots,d\}$, if $\alpha_k\not=0$, then $\mathrm{deg}_Y\alpha_k<\frac{n}{d}$. We say that $G$ is a $d$th \emph{approximate root} of $F$ if $\alpha_1=0$. There is a unique $d$th approximate root of $F$. We denote it by App$(F,d)$. The following results can be found in \cite{ab}.

\begin{thm} \label{polynomialisoneplace}Under the standing hypothesis.
\begin{enumerate} \item  $F(X,Y)$ has one place at infinity, that is the affine curve $F(X,Y)=0$ has one point at infinity, and the projective closure of this curve
in $\mathbb{P}^2_{\mathbb K}$ is analytically irreducible at this point.


\item $\lbrace \mathrm{int}(F,G)\mid  G\in\KK[X,Y]\setminus (F)\rbrace$ is a numerical semigroup. 

\item Let $D(n)$ be the set of divisors of $n$. The set $\lbrace \mathrm{int}(F,\mathrm{App}(F,d))\mid d\in D(n)\rbrace$ generates $\Gamma(F)$. 
\end{enumerate}
\end{thm}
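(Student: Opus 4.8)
The plan is to reduce Theorem \ref{polynomialisoneplace} to the theory of curves with one place at infinity, exploiting the fact that the conditions imposed on $f$ and $g$ (monic, $n>m$, and $\gcd$ of supports equal to $1$) are precisely what one needs for the resultant $F(X,Y)$ to have the shape $Y^n+c_1(X)Y^{n-1}+\cdots+c_n(X)$ with $\deg_x c_i(X)<i$ for $i\ge 1$ after a suitable (degree-lowering, characteristic-zero) change of variables that kills $c_1$. The key observation linking the local and global pictures is that $\mathrm{int}(F,G)=\deg_x G(f(x),g(x))$ plays here exactly the role that $\mathrm o(G(x^n,g(x)))$ played in Section \ref{two-local}: it is a valuation-like function on $\KK[X,Y]/(F)$, only now it is the degree valuation at the point at infinity rather than the order valuation at the origin.

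First I would establish (1), that $F(X,Y)=0$ has a single place at infinity. Homogenize $F$ and look at its intersection with the line at infinity; because $n>m$ and $f$ is monic of degree $n$, the parametrization $x\mapsto (f(x),g(x))$ extends to a parametrization of a neighbourhood of the point at infinity of the projective closure, and the condition that $\gcd$ of the supports is $1$ guarantees this parametrization is primitive (not a proper power in a local uniformizer), hence the branch at infinity is analytically irreducible and unique. Concretely one sets $t=1/x$ and checks that in the chart at infinity the curve is parametrized by a single Puiseux-type expansion in $t$; this is the standard argument from \cite{ab} (Abhyankar--Moh), which I would invoke. Then (2) follows just as the local statement did: $\mathrm{int}(F,G_1G_2)=\mathrm{int}(F,G_1)+\mathrm{int}(F,G_2)$ and $\mathrm{int}(F,G_1+G_2)\ge\min$ with equality when the two values differ, so the set of values is a submonoid of $\NN$; and it generates a numerical semigroup because $\gcd$ of the supports being $1$ forces $\gcd$ of $\{n,m,\dots\}=1$, equivalently $\lambda_A(\KK[x]/A)<\infty$, so there are no common divisors $>1$.

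For (3) I would run the approximate-root construction in parallel with the Newton--Puiseux construction of Section \ref{two-local}. Set $d_1=n$, $m_1=m$, $d_2=\gcd(n,m)$, and inductively $d_{k+1}=\gcd(d_k,m_k)$ using the characteristic exponents of the branch at infinity; let $D(n)\supseteq\{d_1,\dots,d_{h+1}=1\}$ be the relevant chain of divisors. The content of \cite{ab} is that $\mathrm{int}(F,\mathrm{App}(F,d_k))=r_k$, where the $r_k$ are the generators appearing in Section \ref{two-local}, and that $\Gamma(F)=\langle r_0,r_1,\dots,r_h\rangle$ with the semigroup free (hence a complete intersection) for the arrangement $(r_0,\dots,r_h)$; for divisors $d\in D(n)$ not in the chain, $\mathrm{int}(F,\mathrm{App}(F,d))$ already lies in the submonoid generated by the $r_k$, so enlarging to all of $D(n)$ changes nothing. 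Thus $\{\mathrm{int}(F,\mathrm{App}(F,d))\mid d\in D(n)\}$ generates $\Gamma(F)$. I would present this by quoting the relevant statements of \cite{ab} about approximate roots and the semigroup of a polynomial with one place at infinity, and checking that the hypotheses of that reference are met by our $F$.

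The main obstacle I anticipate is purely expository rather than mathematical: carefully matching the normalizations in \cite{ab} (which works with a polynomial in $Y$ over $\KK[X]$, in $Y$-degree $n$, with the $Y^{n-1}$-coefficient removed and the inequalities $\deg_X c_i<i$) to the data coming from the pair $(f,g)$, and verifying that the change of coordinates replacing $g$ by $g-f$ repeatedly — used to force $n>m$ — does not disturb the one-place-at-infinity property or the value semigroup. Equivalently, one must be sure that the degree valuation at infinity is well defined on $\KK[X,Y]/(F)$ independently of these reductions, which is exactly the content of part (1); once (1) is in hand, (2) and (3) are formal consequences of the Abhyankar--Moh--Suzuki theory, so the weight of the proof rests on setting up (1) correctly.
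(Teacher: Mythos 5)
Your proposal is correct and coincides with the paper's treatment: the paper gives no independent proof of Theorem~\ref{polynomialisoneplace}, stating only that ``the following results can be found in \cite{ab}'', and your outline is a faithful sketch of how the Abhyankar--Moh theory of branches at infinity and approximate roots delivers the three assertions. Your identification of the one-place-at-infinity property (part (1)) as the load-bearing step, with (2) and (3) then formal consequences of \cite{ab}, matches the intended reading of the citation.
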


We call $\lbrace \mathrm{int}(F,G)\mid  G\in\KK[X,Y]\setminus (F)\rbrace$ the semigroup of $F$, and we denote it by $\Gamma(F)$.

\begin{cor} 
Let the notations be as above. We have $\mathrm d(A)=\Gamma(F)$.
\end{cor}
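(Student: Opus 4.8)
The plan is to reduce the statement to the identification $A\cong\KK[X,Y]/(F)$ and then simply match degrees on the two sides, exactly mirroring the proof of the analogous Proposition $\mathrm o(R)=\Gamma(F)$ in Section \ref{two-local}.

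First I would recall that, by the very definition of $F$, the $\KK$-algebra homomorphism $\psi\colon\KK[X,Y]\to\KK[x]$ with $\psi(X)=f(x)$ and $\psi(Y)=g(x)$ has kernel the principal ideal $(F)$, so that $\psi$ induces an isomorphism $\KK[X,Y]/(F)\cong\KK[f,g]=A$. In particular every element of $A$ is of the form $G(f(x),g(x))$ for some $G(X,Y)\in\KK[X,Y]$, and $G(f(x),g(x))=0$ if and only if $G\in(F)$.

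Next I would unwind the two monoids. An integer $a$ lies in $\mathrm d(A)$ precisely when there is a nonzero $h\in A$ with $\deg_x h=a$; writing $h=G(f(x),g(x))$ with $G\notin(F)$ gives $a=\deg_x G(f(x),g(x))=\mathrm{int}(F,G)\in\Gamma(F)$. Conversely, given $G(X,Y)\notin(F)$, the element $G(f(x),g(x))$ is a nonzero polynomial of $\KK[x]$ lying in $A$, whence $\mathrm{int}(F,G)=\deg_x G(f(x),g(x))\in\mathrm d(A)$. The value $a=0$ is attained on both sides by the nonzero constants. Hence $\mathrm d(A)=\Gamma(F)$ as submonoids of $\NN$.

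I do not expect any genuine obstacle: this is a verbatim transcription of the power-series case, the only change being that $\mathrm o$ (order) is replaced by $\deg_x$ (degree) and $(F)\KK\llbracket X,Y\rrbracket$ by $(F)\KK[X,Y]$. The single point deserving an extra sentence is that $\Gamma(F)$ is in fact a numerical semigroup and not merely a submonoid of $\NN$; this is already supplied by Theorem \ref{polynomialisoneplace}(2), and equivalently it follows from the hypothesis $\lambda_A(\KK[x]/A)<\infty$ together with condition (3) on the supports of $f$ and $g$, which forces $\gcd(\mathrm d(A))=1$.
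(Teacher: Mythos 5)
Your proposal is correct and follows essentially the same route as the paper: both arguments rest on the observation that every element of $A$ is of the form $G(f(x),g(x))$ with $G\in\KK[X,Y]$, so that $h\mapsto \deg_x h$ on $A^*$ and $G\mapsto \mathrm{int}(F,G)$ on $\KK[X,Y]\setminus(F)$ have the same image. Your extra remarks on the kernel of $\psi$ and on $\Gamma(F)$ being a numerical semigroup are harmless elaborations of what the paper leaves implicit.
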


\begin{proof} In fact, $h(x)\in A $ if and only if $h(x)=P(f(x),g(x))$ for some $P(X,Y)\in\KK[X,Y]$. Hence $a\in\mathrm d(A)$ if and only if $a=\mathrm{int}(F,P),P\in\KK[X,Y]$ which means that $a\in\Gamma(F)$.
\end{proof}

\noindent Let $F(X,Y)=Y^n+c_1(X)Y^{n-1}+\dots+c_n(X)$ be as above, and assume, after a possible change of variables $X'=X,Y'=Y+\frac{c_1}{n}$, that $c_1(X,Y)=0$ (note that this does not change $A $). In particular App$(F,n)=Y$. A system of generators of $\Gamma(F)$ can be found algorithmically in the following way.

Let $r_0=d_1=n=\mathrm{int}(F,X), r_1=\mathrm{deg}_Xa_n(X)=\mathrm{int}(F,\mathrm{App}(F,n))$, and $d_2=\gcd (r_0,r_1)$. We set 
$G_2=\mathrm{App}(F,d_2), r_2=\mathrm{int}(F,G_2)=\mathrm{deg}_xG_2(f(x),g(x))$, and $d_3=\gcd (r_3,d_2)$, and so on\dots With these notations we have the following:
\begin{enumerate}
\item $d_1>d_2>\ldots$ and there exists $h\geq 1$ such that $d_{h+1}=1$;
\item $\Gamma(F)=\mathrm d(A)$ is generated by $\{r_0,r_1,\ldots,r_h\}$;
\item $r_kd_k>r_{k-1}d_{k-1}$ for all $k\in \{1,\ldots,h\}$;
\item  $\Gamma(F)=\mathrm d(A)$ is free with respect to the arrangement $(r_0,\ldots,r_h)$. More precisely, let $e_k=\frac{d_k}{d_{k+1}}$ for all $k\in\{1,\ldots,h\}$. Then $e_kr_k\in \langle r_0,\ldots,r_{k-1} \rangle$;
\item $C=\sum_{k=1}^h(e_k-1)r_k -n+1$ is the conductor of $\Gamma(F)=\mathrm d(A)$. 
\end{enumerate} 

\begin{lem}
If $A =\KK[x]$, then $r_k=d_{k+1}$ for all $k=1,\ldots,h$. In particular $\mathrm{deg}_xG_h(f(x),g(x))=1$ and $m$ divides $n$.
\end{lem}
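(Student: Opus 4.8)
The plan is to exploit that $A=\KK[x]$ makes $\Gamma(F)=\mathrm d(A)$ equal to all of $\NN$, and then to squeeze the conductor formula until each $r_k$ is pinned down. First I would note that $x\in A=\KK[x]$ has degree $1$, so $\mathrm d(A)=\NN$; since $\mathrm d(A)=\Gamma(F)$, the numerical semigroup $\Gamma(F)$ equals $\NN$, hence its conductor is $0$. Putting $C=0$ in the formula $C=\sum_{k=1}^h(e_k-1)r_k-n+1$ gives
$$
\sum_{k=1}^h(e_k-1)r_k=n-1.
$$

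Next I would record two elementary observations, valid for every $k\in\{1,\dots,h\}$. Since the chain $d_1>d_2>\cdots>d_{h+1}$ is strictly decreasing, $e_k=d_k/d_{k+1}\ge 2$, so $e_k-1\ge 1$. Moreover $d_{k+1}=\gcd(r_k,d_k)$ divides $r_k$, and $r_k\neq 0$ — otherwise $\gcd(r_k,d_k)=d_k$, contradicting $d_k>d_{k+1}$ — so $r_k\ge d_{k+1}$. The key computation is then a telescoping: using $e_kd_{k+1}=d_k$,
$$
\sum_{k=1}^h(e_k-1)d_{k+1}=\sum_{k=1}^h(d_k-d_{k+1})=d_1-d_{h+1}=n-1.
$$

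Combining the two displays with $e_k-1\ge 1$ and $r_k\ge d_{k+1}$,
$$
n-1=\sum_{k=1}^h(e_k-1)r_k\ \ge\ \sum_{k=1}^h(e_k-1)d_{k+1}=n-1,
$$
so every inequality in the middle is an equality; since $e_k-1\ge 1$ and $r_k-d_{k+1}\ge 0$ for each $k$, this forces $r_k=d_{k+1}$ for all $k\in\{1,\dots,h\}$. For the remaining assertions: with $k=h$ we get $r_h=d_{h+1}=1$, that is, $\mathrm{deg}_xG_h(f(x),g(x))=\mathrm{int}(F,G_h)=r_h=1$; with $k=1$, recalling that $r_1=m$ (indeed the normalization $c_1=0$ makes $\mathrm{App}(F,n)=Y$, so $r_1=\mathrm{int}(F,Y)=\mathrm{deg}_xg(x)=m$), we obtain $m=r_1=d_2=\gcd(n,m)$, i.e.\ $m\mid n$.

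The only points requiring a word of care are that $r_k\neq 0$ (so that the divisibility $d_{k+1}\mid r_k$ genuinely upgrades to $r_k\ge d_{k+1}$) and the identification $r_1=m$, both of which are immediate from the setup; notice that no part of the argument uses the inequality $r_kd_k>r_{k-1}d_{k-1}$. The single real idea is the telescoping identity $\sum_{k=1}^h(e_k-1)d_{k+1}=d_1-d_{h+1}$, after which the conclusion drops out of a one-line comparison.
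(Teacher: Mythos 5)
Your proof is correct and follows essentially the same route as the paper: set $C=0$ in the conductor formula, compare $\sum_{k=1}^h(e_k-1)r_k=n-1$ against the termwise lower bound $r_k\ge d_{k+1}$, and conclude from equality. The only difference is that you make explicit the telescoping identity $\sum_{k=1}^h(e_k-1)d_{k+1}=d_1-d_{h+1}=n-1$ and the justification that $r_k\ge d_{k+1}$, which the paper leaves implicit in its ``with equality if and only if'' step.
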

\begin{proof} 
If $A =\KK[x]$ then $C=0$, hence $\sum_{k=1}^h(e_k-1)r_k =n-1$. Since $r_{k}\geq d_{k+1}$, then 
$\sum_{k=1}^h(e_k-1)r_k\geq n-1$ with equality if and only if $r_k=d_{k+1}$ for all $k=1,\ldots,h$. Since $m=r_1=d_2=\gcd (n,m)$, then $m$ divides $n$.
\end{proof}

\begin{lem}\cite[Theorem 2]{tor}
If $\gcd (n,m)=1$, then $\lbrace f(x),g(x)\rbrace$ is a basis of $A $. 
\end{lem}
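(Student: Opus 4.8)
The plan is to deduce this directly from the structure of $\Gamma(F)$ recorded above, so that essentially no new work is needed. First I would invoke the Corollary identifying $\mathrm d(A)$ with $\Gamma(F)$; thus it suffices to prove that $\Gamma(F)=\langle n,m\rangle=\langle \mathrm d(f),\mathrm d(g)\rangle$, since $\mathrm d(f)=n$ and $\mathrm d(g)=m$ by the normalization $a_n=b_m=1$. Being a basis of $A$ means precisely that $\{\mathrm d(f),\mathrm d(g)\}$ generates $\mathrm d(A)$, so this identification is the whole content.

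Next I would unwind the algorithmic data defining the generators $r_0,r_1,\ldots,r_h$ of $\Gamma(F)$. We have $r_0=d_1=n=\mathrm{int}(F,X)$, and after the normalization making $c_1(X,Y)=0$ we have $\mathrm{App}(F,n)=Y$, so $r_1=\mathrm{int}(F,\mathrm{App}(F,n))=\mathrm{int}(F,Y)=\deg_x g(x)=m$; this normalization changes neither $A$, nor $\Gamma(F)$, nor the integers $n,m$. Consequently $d_2=\gcd(r_0,r_1)=\gcd(n,m)=1$.

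Now comes the key observation: by definition $h$ is the least index with $d_{h+1}=1$, and since already $d_2=1$ we must have $h=1$. Therefore the list of generators produced by the algorithm is just $\{r_0,r_1\}=\{n,m\}$, and by item (2) above $\Gamma(F)=\langle n,m\rangle$. Combining with the Corollary, $\mathrm d(A)=\Gamma(F)=\langle \mathrm d(f),\mathrm d(g)\rangle$, which is exactly the assertion that $\{f(x),g(x)\}$ is a basis of $A$. There is no genuine obstacle here beyond bookkeeping; the only point deserving a moment's care is verifying $r_1=m$ under the chosen change of variables, and observing that $\gcd(n,m)=1$ forces the descending chain $d_1>d_2>\cdots$ to reach $1$ at the very first step, so that no further generators $r_2,\ldots,r_h$ can arise.
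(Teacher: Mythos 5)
Your proof is correct and follows essentially the same route as the paper: the paper simply asserts that $\gcd(n,m)=1$ forces $\Gamma(F)=\mathrm d(A)=\langle n,m\rangle$ and concludes, while you fill in the (correct) bookkeeping that $d_2=\gcd(r_0,r_1)=1$ makes the chain of gcds terminate at the first step, so no further generators $r_2,\ldots,r_h$ appear.
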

\begin{proof} If $\gcd (n,m)=1$, then $\Gamma(F)=\mathrm d(A)=\langle n,m \rangle$. Hence  $\lbrace f(x),g(x)\rbrace$ is a basis of $A $.
\end{proof}

\begin{lem}
Suppose that $\gcd (n,m)=p_1\cdots  p_l$ where $p_i$ is a positive prime number for all $i\in\{1,\ldots,l\}$ (and the $p_i$'s are not necessarily distinct). The set $\lbrace f(x),g(x)\rbrace$ is not a basis of $A $. Furthermore, if $c$ is the cardinality  of a basis of  $A $, then $2\leq c\leq l+2$. In particular, if $\gcd (n,m)$ is a prime number $p>1$, then a basis of  $A $ has either two or three elements.
\end{lem}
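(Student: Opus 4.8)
The plan is to reduce everything to a statement about the numerical semigroup $\mathrm d(A)=\Gamma(F)$ and the sequences $(r_k)$, $(d_k)$, $(e_k)$ attached to $F$. Write $c$ for the cardinality of a minimal basis of $A$. The first thing I would record is that $c$ equals the embedding dimension of $\mathrm d(A)$: a minimal system of generators of $\mathrm d(A)$ can be lifted to elements of $A$ of exactly those degrees (since $\mathrm d(A)=\{\mathrm d(h)\mid h\in A\}$), and by the polynomial analogue of Proposition~\ref{R1} those lifts already generate $A$; conversely the degrees of the members of any basis generate $\mathrm d(A)$, so no basis can be shorter. Hence the lemma is an assertion about $\mathrm d(A)=\Gamma(F)$ and its generating set $\{r_0,r_1,\dots,r_h\}$.

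The claim that $\{f,g\}$ is not a basis is then immediate: since $\gcd(n,m)=p_1\cdots p_l>1$, the monoid $\langle\mathrm d(f),\mathrm d(g)\rangle=\langle n,m\rangle$ is contained in $\gcd(n,m)\,\NN$, so it has infinite complement in $\NN$ and is not a numerical semigroup, whereas $\mathrm d(A)=\Gamma(F)$ is; hence they differ.

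For the upper bound I would use that property (2) of the sequence gives $\Gamma(F)=\langle r_0,\dots,r_h\rangle$, so $c\le h+1$, and it then suffices to bound $h$. The key point is that $d_2=\gcd(r_0,r_1)=\gcd(n,m)$ (because $r_0=n$ and $r_1=\deg_x g=m$), while at the same time, since $d_{h+1}=1$,
\[
\gcd(n,m)=d_2=\frac{d_2}{d_3}\cdot\frac{d_3}{d_4}\cdots\frac{d_h}{d_{h+1}}=e_2e_3\cdots e_h,
\]
a product of $h-1$ integers each at least $2$ (as $d_k>d_{k+1}$). Counting prime factors with multiplicity on both sides gives $l\ge h-1$, so $h\le l+1$ and $c\le h+1\le l+2$. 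When $\gcd(n,m)=p$ is prime this forces $l=1$, hence $h=2$ (note $h\ge 2$, since $h=1$ would give $\gcd(n,m)=d_2=1$), so $\Gamma(F)=\langle r_0,r_1,r_2\rangle$ and $c\le 3$.

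For the lower bound: $c=1$ would mean $\Gamma(F)$ is generated by a single element, which for a numerical semigroup must be $1$, so $\mathrm d(A)=\Gamma(F)=\NN$; picking elements of $A$ of every degree, a triangular-in-degree argument then forces $A=\KK[x]$. Thus $c\ge 2$ whenever $A\neq\KK[x]$ — the intended setting here, the case $A=\KK[x]$ having been handled separately — and combining with the upper bound yields $2\le c\le l+2$, with $c\in\{2,3\}$ when $\gcd(n,m)$ is prime. The one place that needs genuine care, and the main subtlety of the whole statement, is exactly this last step: the hypothesis $\gcd(n,m)=p_1\cdots p_l$ does not by itself exclude $A=\KK[x]$ (for instance $f=x+x^4,\ g=x^2$ satisfy all the standing assumptions yet give $A=\KK[x]$, $c=1$), so the bound $c\ge2$ should be read under $A\neq\KK[x]$, or equivalently for bases that contain $f$ and $g$, for which it is trivial. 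Everything else is bookkeeping with the sequences $(r_k),(d_k),(e_k)$; the only substantive ingredient is the identity $\gcd(n,m)=d_2=e_2\cdots e_h$ above.
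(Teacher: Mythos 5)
Your argument is correct and is essentially the paper's: the whole content of the upper bound is the identity $\gcd(n,m)=d_2=e_2\cdots e_h$ with each $e_k\ge 2$, which gives $h\le l+1$ and hence at most $h+1\le l+2$ generators $r_0,\dots,r_h$ for $\Gamma(F)=\mathrm d(A)$, while the first assertion follows because $\langle n,m\rangle$ is not a numerical semigroup when $\gcd(n,m)>1$ but $\mathrm d(A)$ is. Where you go beyond the paper is in the last step, and your caution there is warranted: the paper's proof simply asserts that $d_2=\gcd(n,m)>1$ forces $A\ne\KK[x]$, and that assertion is not justified by anything earlier in the section --- indeed your example $f=x^4+x$, $g=x^2$ satisfies all the standing hypotheses with $\gcd(n,m)=2$ and yet gives $A=\KK[x]$ (here $F=(Y^2-X)^2-Y$, $G_2=Y^2-X$, $g_2=-x$, $r_2=1$, $\Gamma(F)=\NN$), so a minimal basis has cardinality $1$. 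The statement is therefore only accurate if $c$ is read as the cardinality of the basis $\{f,g,g_2,\dots,g_h\}$ coming from the approximate-root construction (for which $c=h+1\ge 2$ is automatic), or under the additional hypothesis $A\ne\KK[x]$; your proposal makes exactly this repair, and the remaining bookkeeping (including $h\ge 2$ whenever $d_2>1$, and $l=1\Rightarrow h=2$ in the prime case) is sound.
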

\begin{proof} 
Since $\gcd (n,m)>1$, then the first assertion is clear. On the other hand, since $d_2=\gcd (n,m)=p_1\cdots  p_r$, we have $A \not=\KK[x]$, and $h\leq l+1$. Hence  $\Gamma(F)=\mathrm d(A)$ has at most $l+2$ generators. The result now follows.
\end{proof}

\begin{rem}
Let $\underline{r}=(r_0=n, r_1=m, r_2,\ldots,r_h)$ be a sequence of integers and for all $k\geq 1$, let $d_k=\gcd(r_0,\cdots,r_{k-1})$ and $e_k=\frac{d_k}{d_{k+1}}$. Assume that the following conditions hold:
\begin{enumerate}
\item $d_1>d_2>\ldots>d_{h+1}=1$;
\item $r_kd_k>r_{k-1}d_{k-1}$ for all $k\in\{1,\ldots,h\}$;
\item $e_kr_k\in <r_0,\ldots,r_{k-1}>$ for all $k=1,\ldots,h$.
\end{enumerate} 
Such a sequence is called a \emph{$\delta$-sequence} and it is well known (see \cite{ab}) that there exists a polynomial $\tilde{F}(X,Y)$ with one place at infinity such that the  semigroup $\lbrace \mathrm{rank}_{\KK}\KK[X,Y]/(\tilde{F},G), G\notin (F)\rbrace$ is generated by $\underline{r}$. 

It follows from Theorem 7.1. that a polynomial curve has one place at infinity. The converse is not true in general. Abhyankar asked whether every semigroup generated by a $\delta$-sequence (hence the semigroup of a curve with one place at infinity) is the semigroup of a polynomial curve (for example, the $\delta$-sequence $(10,4,5)$ generates the semigroup $\langle 4,5\rangle$ which is the semigroup of the polynomial curve $A =\KK[x^4,x^5]$). It has been proved recently that the answer is no (\cite{fmy}). It would be nice to see which supplementary conditions a $\delta$-sequence should satisfy in order to generate the semigroup of a polynomial curve.
\end{rem}

\begin{rem}
Let $f(x)$ and $g(x)$ be as above, and let $A =\KK[f(x),g(x)]$. Let also $F(X,Y)$ be the $x$-resultant of $X-f(x)$ and $Y-g(x)$. Let $r_0=n,r_1=m,r_2,\ldots,r_h$ be the generators of $\Gamma(F)$ calculated as above. Let $1\leq k\leq h$ and let $G_k(X,Y)=\mathrm{App}(F,d_k)$. We have $\mathrm d(G_k(f(x),g(x))=r_k$, but $G_k$ is not the unique polynomial with this condition (for example, $\mathrm d((G_k+\lambda)(f(x),g(x)))=r_k$ for all $\lambda\not= 0$). Hence it is natural to ask the following: is there a polynomial $G(X,Y)$ (of degree $<n$ in $Y$) such that $G$ is parametrized by polynomials in $x$ such that $\mathrm d(G(f(x),g(x))=r_k$? Such a polynomial, if it exists,  should be of degree $\frac{n}{d_k}$ and should have the contact with $F$ at a characteristic exponent of $F$ (see \cite{ab} for the definition of the characteristic exponents of a curve with one place at infinity and the notion of contact). Hence the existence of such a polynomial implies that a polynomial curve can be approximated by polynomial curves.
\end{rem}

Let the notations be as above, in particular $F(X,Y)= Y^n+c_1(X)Y^{n-1}+\dots+c_n(X)$ is the $x$-resultant  of $(X-f(x),Y-g(x))$. Let $G_1=Y,G_2,\ldots,G_h$ be the set of approximate roots of $F(X,Y)$ constructed algorithmically as above. In particular $r_0=n,r_1=m,r_2=\mathrm{int}(F,G_2),\ldots,r_h=\mathrm{int}(F,G_h)$ generate $\mathrm d(A)$. For all $k=2,\ldots,h$, let $g_k(x)=G_k(f(x),g(x))$ and let $\mathrm M(g_k)=b_{r_k}x^{r_k}$. We have
$A =\KK[f(x),g(x),g_2(x),\ldots,g_h(x)]$. Furthermore, the map 
$$
D:\KK[u]\longrightarrow B= \KK[u][h_f,h_g,h_{g_2},\ldots,h_{g_h}]
$$
introduced in Section 6.  gives us a deformation of the polynomial curve $B\mid_{u=1}=A$ 
into $B\mid_{u=0}=\KK[t^n,t^m,t^{r_2},\ldots,t^{r_h}]$. Note that, since $\langle n,m,r_2,\ldots,r_h \rangle$ is free with respect to the given arrangement, then it is a complete intersection. For all $k\in\{1,\ldots,h\}$, write $e_kr_k=\sum_{i=0}^{k-1}\theta^k_ir_i$ with $0\leq \theta^k_i<e_i$ for every $i\in\{1,\ldots,k-1\}$. With the notations above, if $T$ is the ideal of $\KK[X_0,X_1,\ldots,X_h]$ generated by
$$
\lbrace X_1^{e_2 }-X_0^{m\over d_2}, X_2^{e_2}-X_0^{\theta^2_0}X_1^{\theta^2_1},\ldots, X_{h}^{e_h}-X_0^{\theta^{h}_0}X_1^{\theta^{h}_1}\ldots X_{h-1}^{\theta^{h}_{h-1}}\rbrace,
$$
then 
$$
\KK[x^n,x^m,x^{r_2},\ldots,x^{r_h}]\simeq {\KK[X_0,X_1,\ldots,X_h]}/{T}.
$$
Set $G_{h+1}= F$ and for all $k\geq 1$, let
$$
G_{k+1}=G_k^{e_k}-X^{\theta^k_0}\prod_{i=1}^{k-1}G_i^{\theta^k_i}+\sum_{\underline{\alpha}^k}c^k_{\underline{\alpha}^k}X^{\alpha^k_0}.G_1^{\alpha^k_1}\cdots  G_{k}^{\alpha^k_{k}},
$$
where the following conditions hold:
\begin{enumerate}
\item for all $i\in\{1,\ldots,k-1\}$, $0\leq \theta^k_i<e_i$;
\item for all $\underline{\alpha}^k$, if $c^k_{\underline{\alpha}^k}\not= 0$, then for all $i\in\{1,\ldots,k\}$, $0\leq \alpha^k_i<e_i$,
\item for all $\underline{\alpha}^k$, if $c^k_{\underline{\alpha}^k}\not= 0$, then $\alpha^k_0n+\sum_{i=1}^{k}\alpha^k_ir_i=D^k_i<e_kr_k=\theta^k_0r_0+\sum_{i=1}^{k-1}\theta^k_ir_i$.
\end{enumerate}
It follows from Section \ref{deformation-global} that if $I$ (respectively $J$) is the ideal generated by $(X_k^{e_k}-X_0^{\theta^k_0}\prod_{i=1}^{k-1}X_i^{\theta^k_i}+\sum_{\underline{\alpha}^k}c^k_{\underline{\alpha}^k}X_0^{\alpha^k_0}.X_1^{\alpha^k_1}\cdots  X_{k}^{\alpha^k_{k}})_{1\leq k\leq h}$ (respectively $(X_k^{e_k}-X_0^{\theta^k_0}\prod_{i=1}^{k-1}X_i^{\theta^k_i}+\sum_{\underline{\alpha}^k}c^k_{\underline{\alpha}^k}u^{e_kr_k-D^k_i}X_0^{\alpha^k_0}.X_1^{\alpha^k_1}\cdots  X_{k}^{\alpha^k_{k}})_{1\leq k\leq h}$) in $\KK[X_0,\ldots,X_h]$ (respectively $\KK[u][X_0,\ldots,X_h]$), then 
$$
A =\KK[x^n,g(x),g_2(x),\ldots,g_h(x)]\simeq {\KK[X_0,X_1,\ldots,X_h]}/{I}
$$
and 
$$
 \KK[u][x^n,h_{g(x)},h_{g_2(x)},\ldots,h_{g_h(x)}]\simeq {\KK[u][X_0,X_1,\ldots,X_h]}/{J}.
$$
Furthermore, ${\KK[u][X_0,X_1,\ldots,X_h]}/{J}$ is a flat $\KK[u]$-module. This gives us a family of space curves parametrized by $u$ which is a deformation from ${\KK[X_0,X_1,\ldots,X_h]}/{I}$ to 
the  toric variety ${\KK[X_0,X_1,\ldots,X_h]}/{T}$. The later being a complete intersection, we get the following result.

\begin{thm}
Every polynomial curve $X=f(x), Y=g(x)$ of $\KK^2$ has a deformation into a monomial complete intersection curve of $\KK^{h+1}$ for some positive integer $h$.
\end{thm}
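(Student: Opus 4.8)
The plan is to assemble the ingredients already prepared in this section. First I would produce an explicit basis of $A$ from the approximate roots of the $x$-resultant $F(X,Y)$ of $(X-f(x),Y-g(x))$. By Theorem~\ref{polynomialisoneplace} the semigroup $\mathrm d(A)=\Gamma(F)$ is generated by $r_0=n,r_1=m,r_2,\ldots,r_h$, where $r_k=\mathrm{int}(F,G_k)$ and $G_1=Y,G_2,\ldots,G_h$ are the approximate roots $\mathrm{App}(F,d_k)$ constructed algorithmically as above. Setting $g_k(x)=G_k(f(x),g(x))$, we have $\mathrm d(g_k)=r_k$, so $\{f,g,g_2,\ldots,g_h\}$ generates $\mathrm d(A)$, i.e.\ it is a basis of $A$, and $A=\KK[f,g,g_2,\ldots,g_h]$.

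Next I would feed this basis into the homogenization machinery of Section~\ref{deformation-global}. With $B=\KK[u,h_f,h_g,h_{g_2},\ldots,h_{g_h}]$, the inclusion $D\colon\KK[u]\to B$ is flat because $B$ is a domain and every nonzero element of $\KK[u]$ is a nonzerodivisor, and the fibres are $B\mid_{u=1}=A$ and $B\mid_{u=0}=\KK[x^n,x^m,x^{r_2},\ldots,x^{r_h}]$. This already produces a flat $\KK[u]$-deformation of the polynomial curve $X=f(x),Y=g(x)$ into the monomial curve parametrized by $t^n,t^m,t^{r_2},\ldots,t^{r_h}$.

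It then remains to identify this monomial curve as a complete intersection and to make the presentations match up. By part~(4) of the list following Theorem~\ref{polynomialisoneplace}, $\langle n,m,r_2,\ldots,r_h\rangle$ is free for the arrangement $(r_0,\ldots,r_h)$; writing $e_kr_k=\theta_0^k r_0+\cdots+\theta_{k-1}^k r_{k-1}$ with $0\le\theta_i^k<e_i$, the $h$ binomials $X_k^{e_k}-X_0^{\theta_0^k}\cdots X_{k-1}^{\theta_{k-1}^k}$, $1\le k\le h$, generate the toric ideal $T$ with $\KK[x^n,x^m,x^{r_2},\ldots,x^{r_h}]\simeq\KK[X_0,\ldots,X_h]/T$; since there are exactly $h$ of them and the variety has dimension one in $\KK^{h+1}$, it is a complete intersection. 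Using the recursively defined $G_{k+1}$ (the approximate roots together with their lower-degree corrections, subject to conditions (1)--(3) above) one presents $A\simeq\KK[X_0,\ldots,X_h]/I$ and its homogenization $\simeq\KK[u][X_0,\ldots,X_h]/J$ with $J$ generated by the $h$ corresponding $u$-deformed binomials; Lemmas~\ref{kerglobal} and~\ref{grobnerbasis} (the global analogues proved above) then give $J=\ker(h_\psi)$ and flatness of $\KK[u][X_0,\ldots,X_h]/J$ over $\KK[u]$. Specializing $u\to1$ and $u\to0$ yields the asserted deformation of the complete intersection curve $V(I)$ into the monomial complete intersection curve $V(T)$.

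The step I expect to be the real obstacle is the passage from ``basis'' to ``presentation'': showing that the ideal $I$ of relations among $f,g,g_2,\ldots,g_h$ is generated precisely by the $h$ elements $G_{k+1}$, so that their initial forms are exactly the $h$ toric binomials generating $T$. This rests on the properties of approximate roots and $\delta$-sequences from \cite{ab} together with the degree bookkeeping in condition (3) on the $\underline{\alpha}^k$, and it is what makes Lemma~\ref{grobnerbasis} applicable so that $\KK[u]$-flatness is automatic. Everything else — the flatness of $D$, the fibre computation, the freeness–complete intersection implication — is formal and was already established earlier.
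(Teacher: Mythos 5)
Your proposal is correct and follows essentially the same route as the paper: approximate roots of the resultant $F$ give the basis $\{f,g,g_2,\ldots,g_h\}$ with degrees the free ($\delta$-sequence) generators $r_0,\ldots,r_h$, the homogenization machinery and Lemmas~\ref{kerglobal} and \ref{grobnerbasis} of Section~\ref{deformation-global} give the presentations $A\simeq\KK[X_0,\ldots,X_h]/I$ and $\KK[u][X_0,\ldots,X_h]/J$ with $J=\ker(h_\psi)$ flat over $\KK[u]$, and freeness of the semigroup identifies the special fibre as the complete intersection $V(T)$. You also correctly locate the one nontrivial input --- that the $h$ relations $G_{k+1}$ coming from the approximate-root expansions generate $I$ and have the toric binomials as initial forms --- which is exactly where the paper invokes the structure theory of \cite{ab} together with Section~\ref{deformation-global}.
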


\begin{ex} 
Let $f(x)=x^6+x^3, g(X)=x^4$. The minimal polynomial of $(f(x),g(x))$ is given by:
$$
F(X,Y)=Y^6-2X^2Y^3-4XY^3-Y^3+X^4.
$$
Let $r_0=6=d_1,r_1=4$ and $G_1=Y$. We have $d_2=\gcd (6,4)=2$, and $G_2=\mathrm{App}(F,2)=Y^3-X^2-2X-\frac{1}{2}$. Since  $g_2(x)=G_2(f(x),g(x))=-2x^9-3x^6-2x^3-\frac{1}{2}$, then 
$r_2=9$ and $d_3=1$, hence $\Gamma(F)=\mathrm d(A)=\langle 6,4,9 \rangle$ and $\lbrace f(x),g(x),-g_2(x)=\rbrace$ is a basis of $A $.
Consequently, $h_A =\KK[u,x^6+u^3x^3,x^4,2x^9+3u^3x^6+2u^6x^3+\frac{1}{2}u^9]$. Note that, with the notations above, $e_1=3, e_2=2$, hence $\KK[x^6,x^4,2x^9]\simeq {\KK[X_0,X_1,X_2]}/{(X_1^3-X_0^2, X_2^2-4X_0^3)}={\KK[X_0,X_1,X_2]}/T$,
$\KK[x^6+x^3,x^4,2x^9-3x^6-2x^3-\frac{1}{2}]\simeq{\KK[X_0,X_1,X_2]}/{(X_1^3-X_0^2-2X_0-\frac{1}{2}, X_2^2-4X_0^3-5X_0^2-2X_0-\frac{1}{4})}$, and
$$
\KK[u]\longrightarrow {\KK[u][X_0,X_1,X_2]}/{(X_1^3-X_0^2-2u^6X_0-\frac{1}{2}u^9, X_2^2-4X_0^3-5u^6X_0^2-2u^{12}X_0-\frac{1}{4}u^{18})}
$$
gives us a deformation from $A $ to ${\KK[X_0,X_1,X_2]}/T$.

The computation of the approximate roots and of $\Gamma(F)$ can be performed with the algorithm presented in \cite{ag}.
\begin{verbatim}
gap> f:=y^6-2*x^2*y^3-4*x*y^3-y^3+x^4;;
gap> SemigroupOfValuesOfPlaneCurveWithSinglePlaceAtInfinity(f);
<Numerical semigroup with 3 generators>
gap> MinimalGeneratingSystem(last);
[ 4, 6, 9 ]
gap> SemigroupOfValuesOfPlaneCurveWithSinglePlaceAtInfinity(f);
[ [ 6, 4, 9 ], [ y, y^3-x^2-2*x-1/2 ] ]
\end{verbatim}
\end{ex}

\begin{ex}
Let $f(x)=x^6+x, g(x)=x^4$. The minimal polynomial of $(f(x),g(x))$ is given by:
$$
F(X,Y)=Y^6-2X^2Y^3-4XY^2-Y+X^4.
$$
Let $r_0=6=d_1,r_1=4$ and $G_1=Y$. We have $d_2=\gcd (6,4)=2$, and $G_2=\mathrm{App}(F,2)=Y^3-X^2$. Since  $g_2(x)=G_2(f(x),g(x))=-2x^7-x^2$, then 
$r_2=7$ and $d_3=1$, hence $\Gamma(F)=\mathrm d(A)=\langle 6,4,7 \rangle$ and $\lbrace f(x),g(x),-g_2(x)\rbrace$ is a basis of $A $.
Consequently, $h_A =\KK[u,x^6+u^5x,x^4,2x^7+u^5x^2]$. Note that, with the notations above, $e_1=3, e_2=2$, hence $\KK[x^6,x^4,x^7]\simeq {\KK[X_0,X_1,X_2]}/{(X_1^3-X_0^2, X_2^2-X_0X_1^2)}={\KK[X_0,X_1,X_2]}/T$, and
$$
\KK[u]\longrightarrow {\KK[u][X_0,X_1,X_2]}/{(X_1^3-X_0^2, X_2^2-4X_0X_1^2-u^{10}X_1)}
$$
gives us a deformation from $A $ to ${\KK[X_0,X_1,X_2]}/T$ (we can also change $X_2$ with $\frac{1}2X_2$, and then we get 
$(X_1^3-X_0^2, X_2^2-X_0X_1^2-\dfrac{1}{4}u^{10}X_1)$ instead).
\end{ex}
      \begin{center}
         \Large
    Acknowledgements
     \end{center}

The authors would also like to thank Lance Bryant for providing plenty of examples for testing our algorithms, as well as suggesting us to implement new functionalities, that are now given as an extra argument of the function \texttt{SemigroupOfValuesOfCurve\_Local}.

A communication with the contents of this manuscript was presented by the third author at MEGA'2015. We would like to thank the anonymous referees assigned by the organization of this meeting for their comments and suggestions. All of them were taken into account in the current version of this manuscript.

\end{document}